\documentclass[reqno, 11pt]{article}

\usepackage{fullpage}
\usepackage{amsmath} 
\usepackage{amssymb} 
\usepackage{amsthm} 
\usepackage{amsfonts}
\usepackage{dsfont} 
\usepackage{color}
\usepackage{mathtools}

\newtheorem{thm}{Theorem}
\newtheorem{cor}[thm]{Corollary}
\newtheorem{lem}[thm]{Lemma}
\newtheorem{prop}[thm]{Proposition}
\newtheorem{cond}[thm]{Condition}
\theoremstyle{definition}
\newtheorem{defn}[thm]{Definition}
\newtheorem{assum}[thm]{Assumption}

\theoremstyle{remark}
\newtheorem{rmk}[thm]{Remark}
\newtheorem{nrmk}[thm]{Notational Remark}

\numberwithin{equation}{section} 
\numberwithin{thm}{section}

\newcommand{\hidx}{a}
\newcommand{\widx}{\alpha}
\newcommand{\Noise}{X}
\newcommand{\noise}{X_{\hidx\widx}}
\newcommand{\Noisev}{{\boldsymbol \Noise}}
\newcommand{\Data}{Y}
\newcommand{\data}{Y_{\hidx\widx}}

\newcommand{\SNR}{\lambda}
\newcommand{\snr}{w}

\newcommand{\U}{{\boldsymbol u}}
\newcommand{\V}{{\boldsymbol v}}

\newcommand{\bs}{{\boldsymbol \sigma}}
\newcommand{\bt}{{\boldsymbol \tau}}

\newcommand{\ratio}{d_0}

\title{Free energy and fluctuations of generalized bipartite Sherrington--Kirkpatrick models and generalized Hopfield models}

\author{Hyunsuk Choo\footnote{Department of Mathematical Sciences, KAIST, Daejeon, 34141, Korea \newline email: \texttt{hyunsuk.choo@kaist.ac.kr}} \,
and Ji Oon Lee\footnote{Department of Mathematical Sciences, KAIST, Daejeon, 34141, Korea \newline email: \texttt{jioon.lee@kaist.edu}}}
 
\begin{document}

\maketitle

\abstract{We consider the free energy of generalized bipartite Sherrington--Kirkpatrick models in the high temperature regime, with one spin normalized and the other either normalized or i.i.d. We derive a unified formula for the limiting free energy in both settings. We also prove that the fluctuations of the free energy converge to a Gaussian distribution under mild assumptions. The proof is based on a bipartite multigraph expansion, which also gives free energy results for generalized Hopfield models and log likelihood ratio of spiked rectangular models.} 

\section{Introduction}\label{sec:introduction}

The Sherrington--Kirkpatrick (SK) model \cite{Sherrington-Kirkpatrick1975} describes a system of Ising spins with independent random pairwise interactions and is a central model in the theory of spin glasses.
One of the main questions is to understand the asymptotic behavior of its free energy, including the deterministic limit and the fluctuations around it.
The limiting free energy was predicted by Parisi \cite{Parisi1979, Parisi1980} and rigorously proved by Talagrand \cite{Talagrand2006}.
In the high temperature regime, Aizenman, Lebowitz, and Ruelle \cite{Aizenman-Lebowitz-Ruelle1987} proved that the rescaled fluctuations of the free energy converge to a Gaussian distribution.
A closely related mean-field model of spin glass is the Hopfield model \cite{Hopfield1982}, whose couplings form a Gram matrix constructed from stored patterns.
It was introduced as a model of associative memory, in which a stored pattern is retrieved from a partial or corrupted input, and has motivated subsequent developments such as Dense Associative Memory \cite{Krotov-Hopfield2016} and Modern Hopfield networks \cite{Modern-Hopfield}.

In this paper, we consider the generalized Hopfield partition function
\begin{equation}\label{eq:intro-hopfield}
    \hat{Z}_{M,N}(\beta) =\mathbb{E}_{\bt}\left[\exp\left(\sum_{\hidx=1}^{M} \psi\left(\frac{\beta}{\sqrt{N}} \sum_{\widx=1}^{N}J_{\hidx\widx} \frac{\tau_\widx}{\|\bt\|} \right)\right)\right], \qquad \psi(t)=\log\mathbb{E}_{\sigma}[e^{t\sigma}],
\end{equation}
where the rows of the $M\times N$ matrix $J$ are the patterns, $\bt$ is the visible spin vector, and $\sigma$ is a hidden spin with a symmetric, unit-variance distribution.
The patterns have i.i.d., centered, unit-variance entries, and $\beta>0$ is the inverse temperature parameter.
When the hidden spin is a standard Gaussian, $\psi(t)=t^2/2$, and the partition function in \eqref{eq:intro-hopfield} reduces to
\begin{equation}\label{eq:def-hopfield}
	Z^H_{M,N}(\beta) =\mathbb{E}_{\bt}\left[\exp\left( \frac{\beta^2}{2N\| \bt \|^2} \bt^T J^T J\bt \right)\right],
\end{equation}
which corresponds to the classical Hopfield model if the patterns and visible spins are Rademacher random variables, with inverse temperature $\beta^2/2$ and with diagonal terms retained.
The generalized form in \eqref{eq:intro-hopfield} arises by integrating out the hidden layer of a restricted Boltzmann machine (RBM), a bipartite model used for learning probability distributions, and it is used to study how the priors affect the phase diagram and retrieval properties \cite{Barra-Genovese-Sollich-Tantari2017,Barra-Genovese-Sollich-Tantari2018}.

The properties of the generalized Hopfield model, especially its free energy defined by
\begin{equation}\label{eq:def_free_energy}
	\hat{F}_{M,N}(\beta):=\frac{1}{N}\log\hat{Z}_{M,N}(\beta),
\end{equation}
has been extensively studied. 
The limiting free energy of the classical Hopfield model in the high temperature regime was first proved by Scacciatelli and Tirozzi \cite{scacciatelli1992fluctuation} and later extended to the generalized Hopfield model by Genovese \cite{genovese2012universality}.
The fluctuation of the free energy for the classical Hopfield model was also proved in \cite{scacciatelli1992fluctuation}, and analogous results were proved for the Hopfield model with Gaussian patterns and Ising visible spins \cite{Barra-Guerra2008,Barra-Genovese-Guerra-Tantari2012} and general patterns and spherical visible spins \cite{auffinger2014free,Baik-Lee2016}. 
In physics literature \cite{Barra-Genovese-Sollich-Tantari2017,Barra-Genovese-Sollich-Tantari2018}, it was considered the generalized Hopfield model whose patterns and spins are the mixtures of the Gaussian and Rademacher random variables. 
However, to our best knowledge, all previously known rigorous results require certain specific properties of the patterns and/or spins. 

\subsection{Main contributions}
In this paper, we prove the Gaussian fluctuations for several quantities including the free energy of the generalized Hopfield models in the high temperature regime under mild assumptions on the patterns and spins. 
More precisely, in our first main result (Theorem \ref{thm:Hopfield}), we prove that if $M/N\to\ratio\in(0,\infty)$, then
\begin{equation}\label{eq:intro-hopfield-limit}
    \hat{F}_{M,N}(\beta)\rightarrow \ratio\tilde{\Lambda}(\beta^2/2), \qquad \tilde{\Lambda}(t):=\log\mathbb{E}_{\sigma}[e^{t\sigma^2}]
\end{equation}
in probability, and $\log\hat{Z}_{M,N}(\beta)-M\tilde{\Lambda}(\beta^2/2)$ converges in distribution to a Gaussian with explicit formulas for the mean and the variance of the limiting Gaussian.

Our proof of Theorem is based on the bipartite representation of \eqref{eq:intro-hopfield}, with Hamiltonian $N^{-1/2}\sum_{\hidx,\widx}J_{\hidx\widx}\sigma_\hidx\tau_\widx$, where the hidden spins $\sigma_\hidx$ are i.i.d. copies of $\sigma$. 
With the bipartite representation, the generalized Hopfield model can be considered as a special case of the generalized bipartite SK model. 
In our second main result (Theorems \ref{thm:bsk-iid} and \ref{thm:bsk-normalized}), we prove that our result on the generalized Hopfield model naturally extends to the generalized bipartite SK model.

To analyze the generalized bipartite model, we apply the multigraph expansion framework introduced in \cite{Choo-Han-Lee2026} with the following adaption: The two spin species contribute separate vertex weights, while the bipartite structure permits only even cycles and excludes self-loops.
We establish the corresponding multigraph comparison and cycle estimates, keeping track of both priors and the aspect ratio.
An additional step is needed for the i.i.d. hidden layer: the expansion produces the factor $\exp(\beta^2\|\bs\|^2/2)$, which we absorb into the hidden prior by a quadratic change of measure.
Its normalizing constant is $\exp(M\tilde{\Lambda}(\beta^2/2))$, giving the leading term in \eqref{eq:intro-hopfield-limit}.
We then control the moments and overlaps under the tilted law and apply the bipartite comparison to the remaining fluctuation.
The same framework also treats a normalized hidden layer, for which $\|\bs\|^2=M$ and the leading term reduces to $M\beta^2/2$.
Thus the squared norm of the hidden layer provides a common mechanism for the free energy in the normalized and i.i.d. settings.

As in \cite{Choo-Han-Lee2026}, the analysis of the bipartite multigraph expansion applies to even more general problems. 
One notable such example is the study of the log likelihood ratio in the rank-$1$ spiked rectangular model $\Data=\Noise+\sqrt{\SNR}\U\V^T$, where $\Noise$ has independent entries of variance $N^{-1}$ and the signal vectors $\U$ and $\V$ are drawn independently from given priors.
The detection problem is to distinguish a fixed positive signal strength from the unspiked model.
By the Neyman--Pearson lemma \cite{Neyman-Pearson1933}, the likelihood ratio determines the optimal tradeoff between Type-I and Type-II errors.
Extending the results for Gaussian noise \cite{Alaoui-Jordan2018, Banerjee-Ma2022}, in our third main result (Theorem \ref{thm:log-LR}), we prove a Gaussian limit for the log likelihood ratio with general noise satisfying suitable density regularity assumptions.
Here, no quadratic change of the prior is needed, so either signal vector may independently have a normalized or an i.i.d. prior.
The result covers symmetric sub-Gaussian priors in a subcritical range determined by an overlap second moment condition, including i.i.d. priors on both sides, and yields the limiting optimal testing error.

\subsection{Main results}

Our first main result is the Gaussian fluctuation for the free energy of the generalized Hopfield models in the high temperature regime. 
Since the general statement is rather involved, we first state the simplest version for which we will use the following definitions; the general results for large class of priors and the common universality proposition are given in Section \ref{sec:general-results}.
\begin{defn}[Strict sub-Gaussian random variable]
A real-valued random variable $T$ is strictly sub-Gaussian if $\mathbb{E}[e^{\alpha T}]\le e^{\alpha^2/2}$ for any (possibly $N$-dependent) constant $\alpha$.
\end{defn} 

\begin{defn}[Lee--Yang type random variable] \label{defn:Lee-Yang}
A real-valued random variable $T$ is of Lee--Yang type if it satisfies following three properties:
\begin{itemize}
	\item $T$ is symmetric about $0$.
	\item $\mathbb{E}[\exp(cT^2)]<\infty$ for some $c>0$.
	\item For $z\in \mathbb{C}$, $\mathbb{E}[\exp(zT)]$ only has zeros on the pure imaginary axis.
\end{itemize}
\end{defn} 
(See also Remark \ref{rem:Lee-Yang}.)

Now, we state our first main result.
\begin{thm} \label{thm:Hopfield}
	Recall the defintion of the free energy  in \eqref{eq:intro-hopfield} and \eqref{eq:def_free_energy}. 
	Suppose that $J_{\hidx\widx}(1\le \hidx\le M, 1\le \widx\le N)$ are i.i.d., symmetric, and strictly sub-Gaussian random variables with variance $1$.
	Suppose that $\sigma_\hidx(1\le \hidx\le M)$ are i.i.d., Lee--Yang type random variables with variance $1$, independent of $J_{\hidx\widx}$.
	Suppose that $\tau_{\widx}{(1\le \widx\le N)}$ are i.i.d., symmetric, and strictly sub-Gaussian random variables with variance $1$, independent of $J_{ \hidx\widx}$ and $\sigma_\hidx$. 
	Recall the definition of $\tilde{\Lambda}(t)$ in \eqref{eq:intro-hopfield-limit}. 
Assume that in the small neighborhood of $t=\beta^2 /2$, $\tilde{\Lambda}, \tilde{\Lambda}', \tilde{\Lambda}''<\infty$.
	Define
	\begin{equation*}
		\tilde{\sigma}^2 = \tilde{\Lambda}'\left( \frac{\beta^2}{2} \right),\quad \tilde{m}_4=\tilde{\Lambda}''\left( \frac{\beta^2}{2} \right)\tilde{\sigma}^{-4} +1
	\end{equation*}
	Then, for any $\beta\in (0, \ratio^{-1 /4}\tilde{\sigma}^{-1})$,
	\[ { \hat{F}}_{M,N}(\beta) \to \tilde{F}(\beta):= \ratio\tilde{\Lambda}\left( \frac{\beta^2}{2} \right).\] and
	\[ N\left({ \hat{F}}_{M,N}(\beta)-\frac{M}{N}\tilde{\Lambda}\left(\frac{\beta^2}{2}\right)\right) \implies \mathcal{N}(\tilde{m}_F, \tilde{V}_F),\]
	as $M,N\to \infty$, i.e., $N{ \hat{F}}_{M,N}(\beta)-M\tilde{\Lambda}(\beta^2 /2)$ converges in distribution to the Gaussian with mean $\tilde{m}_F$ and the variance $\tilde{V}_F$ as $M,N\to \infty$.
	Here,
	\[  \tilde{m}_F =\frac{1}{4}\log\left( 1-\ratio\beta^4\tilde{\sigma}^4 \right)+\frac{\ratio\beta^4\tilde{\sigma}^4}{24}(x_4-3)(\tilde{m}_4n_4-3) \] 
	and 
	\[ \tilde{V}_F= -\frac{1}{2} \log( 1-\ratio\beta^4\tilde{\sigma}^4) +\frac{\ratio\beta^4\tilde{\sigma}^4}{4}(x_4-3) ,\]
	where we let $n_4=\mathbb{E}[\sigma_1^4]$ and $x_4=\mathbb{E}[J_{11}^4]$.	
\end{thm}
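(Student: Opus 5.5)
Our plan is to pass to the bipartite representation and then run the multigraph expansion. Since $\psi(t)=\log\mathbb{E}_\sigma[e^{t\sigma}]$ and the $\sigma_\hidx$ are i.i.d.,
\[
\hat Z_{M,N}(\beta)=\mathbb{E}_{\bs,\bt}\Big[\exp\Big(\frac{\beta}{\sqrt N}\sum_{\hidx,\widx}J_{\hidx\widx}\sigma_\hidx\frac{\tau_\widx}{\|\bt\|}\Big)\Big].
\]
This is a generalized bipartite SK partition function with an i.i.d.\ hidden layer $\bs$ and a normalized visible layer $\bt/\|\bt\|$. The theorem then becomes a special case of the i.i.d.\ bipartite result, but we will describe the steps directly.

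\textbf{Step 1: Gaussian comparison.} Using that $J$ is symmetric and strictly sub-Gaussian, we expand $\exp(\beta N^{-1/2}J_{\hidx\widx}\sigma_\hidx\hat\tau_\widx)$, where $\hat\tau_\widx=\tau_\widx/\|\bt\|$, to fourth order in each entry. We average edgewise and keep track of the quadratic factor $\exp(\beta^2\sigma_\hidx^2\hat\tau_\widx^2/(2N))$ and the fourth-cumulant correction $(x_4-3)\beta^4\sigma_\hidx^4\hat\tau_\widx^4/(24N^2)$. Summing over $\widx$ with $\sum_\widx\hat\tau_\widx^2=1$ produces $\exp(\beta^2\|\bs\|^2/(2N))$. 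In the multigraph language, $\hat Z$ becomes a sum over bipartite multigraphs. Self-loops are absent, and only even cycles contribute at leading order. The contributions from edges of multiplicity one pair into simple cycles of length $2k$, each weighted by $\beta^{2k}$ times products of overlaps $(\bs\cdot\bs')/N$ and $\bt\cdot\bt'$.

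\textbf{Step 2: quadratic change of measure.} We absorb $\prod_\hidx e^{\beta^2\sigma_\hidx^2/(2N)\cdot N}$ into the prior of $\bs$, i.e.\ we tilt $\sigma$ to the law $e^{(\beta^2/2)\sigma^2}\,d\mu(\sigma)/\mathbb{E}e^{(\beta^2/2)\sigma^2}$. This factors out $\exp(M\tilde\Lambda(\beta^2/2))$, which is the leading term. Under the tilted law the second moment is $\tilde\sigma^2=\tilde\Lambda'(\beta^2/2)$, and the fourth moment equals $\tilde m_4\tilde\sigma^4$ via $\tilde\Lambda''$. These identities explain why only $\tilde\sigma$ and $\tilde m_4$ enter the answer. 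The Lee--Yang property is what guarantees that the tilted variable remains (strictly) sub-Gaussian-type with finite exponential moments, so the overlap concentration and moment bounds needed below still hold. We will check this from the Hadamard product representation $\mathbb{E}e^{z\sigma}=\prod(1+z^2/a_j^2)$.

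\textbf{Step 3: cycle statistics and the limit.} We write $\log(\hat Z e^{-M\tilde\Lambda})$ as a deterministic part plus $\sum_k \frac{(\beta^2\tilde\sigma^2)^{2k}}{...}C_{2k}$, where $C_{2k}$ are centered cycle counts of $J/\sqrt N$ in the bipartite graph. By the standard CLT for cycle statistics of rectangular i.i.d.\ matrices, the $C_{2k}$ are asymptotically independent Gaussians with variance $\ratio^k/(2k)$ up to a correction from $x_4$ at $k=1$. Summing the resulting geometric/log series under $\ratio\beta^4\tilde\sigma^4<1$ gives the variance $-\frac12\log(1-\ratio\beta^4\tilde\sigma^4)+\frac{\ratio\beta^4\tilde\sigma^4}{4}(x_4-3)$. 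The mean $\frac14\log(1-\cdots)$ comes from the half-variance and the cycle means. The term $(x_4-3)(\tilde m_4 n_4-3)$ comes from the fourth-cumulant correction paired with fourth moments of the tilted hidden spin and of $\sqrt N\hat\tau$; we will track carefully that the $\tau$ fourth moment appears as written. Convergence of $\hat F$ to $\tilde F$ in probability then follows immediately, since the fluctuation is $O(1)$.

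\textbf{Main obstacle.} The hard part is controlling the remainder of the multigraph expansion uniformly up to the threshold $\beta<\ratio^{-1/4}\tilde\sigma^{-1}$. This requires second-moment (Bernstein/Wick-type) bounds on the tilted-spin overlaps $\mathbb{E}\exp(c N(\bs\cdot\bs'/N)^2(\bt\cdot\bt')^2)$ for the replicated partition function. These bounds are what justify discarding non-cycle multigraphs and higher-multiplicity edges. We would first try to prove the bipartite analogue of the multigraph comparison lemma of \cite{Choo-Han-Lee2026}, using sub-Gaussianity of the tilted $\sigma$ and of $\tau$ to bound the overlap generating function by that of Gaussians, whose value is $(1-\ratio\beta^4\tilde\sigma^4)^{-1/2}<\infty$ exactly in the stated range.
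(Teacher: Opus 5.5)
Your proposal is essentially the paper's proof. You pass to the bipartite model with i.i.d.\ hidden and normalized visible layer, absorb $e^{\beta^2\|\bs\|^2/2}$ by the quadratic tilt (which produces $e^{M\tilde\Lambda(\beta^2/2)}$, $\tilde\sigma^2$ and $\tilde m_4$), and use that the tilt preserves the Lee--Yang property and that Lee--Yang variables are strictly sub-Gaussian, so the overlap second-moment condition holds for every $\snr_c<1$. The bipartite multigraph comparison and simple-cycle CLT then give exactly the range $\sqrt{\ratio}\beta^2\tilde\sigma^2<1$ and the stated $\tilde m_F,\tilde V_F$.

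A few small slips need fixing:
\begin{itemize}
\item The factor $N^{-1/2}$ together with $\tau_\widx/\|\bt\|$ double-normalizes. This typo is inherited from the statement, and your Step 2 silently corrects it.
\item The replica cross term is bilinear, $\beta^2(\bs\cdot\bs')(\bt\cdot\bt')/(\|\bt\|\|\bt'\|)$, not a product of squared overlaps.
\item A $2k$-cycle carries $(\beta^2\tilde\sigma^2)^{k}$, not $(\beta^2\tilde\sigma^2)^{2k}$.
\item The Hadamard form should be $e^{bz^2}\prod_j(1+z^2/a_j^2)$; the Gaussian factor is needed, e.g.\ for Gaussian hidden spins. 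Invariance of the Lee--Yang property under the tilt is the P\'olya/Newman--Wu fact the paper cites, not something read off the product directly.
\item Your pairing of fourth moments is right: $n_4$ must be $\mathbb{E}[\tau_1^4]$, since the hidden fourth moment already sits in $\tilde m_4$.
\end{itemize}
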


Note that Theorem \ref{thm:Hopfield} covers the Gaussian--Rademacher interpolation model studied in \cite{Barra-Genovese-Sollich-Tantari2017, Barra-Genovese-Sollich-Tantari2018}, since the Lee--Yang property is preserved under independent sums. 
Note also that Theorem \ref{thm:Hopfield} (and subsequent theorems in this section) covers the full high temperature regime; see Remark \ref{rem:criticality}.

Applying Theorem \ref{thm:Hopfield} with Gaussian hidden prior, using cumulant generating function of Gaussian random variable $\psi(t) = t^2 /2$, we get the following result for the Hopfield model.
\begin{cor}\label{cor:Hopfield-Gaussian}
	Recall the definition of the partition function of the Hopfield model in \eqref{eq:def-hopfield}. Define its free energy
	\[ F^H_{M,N} \equiv F^H_{M,N}(\beta) = \frac{1}{N}\log Z^H_{M,N}. \]
	Suppose that the assumptions in Theorem \ref{thm:Hopfield} hold for $J$ and $\bt$. Then, for any $\beta\in (0,{(1+\sqrt{\ratio})^{-1 /2}})$, 
	\[ F^H_{M,N}(\beta)\to F^H_{M,N}(\beta):=-\frac{\ratio}{2}\log(1-{\beta^2}),\]
	and
	\[ N\left( F^H_{M,N}(\beta)-\left( -\frac{M}{2N}\log(1-{\beta^2}) \right) \right)\implies \mathcal{N}(m_F^H, V_F^H),\]
	as $M,N\to \infty$.
	Here,
	\[ m_F^H=\frac{1}{4}\log\left( 1-\frac{\ratio{\beta^4}}{(1-{\beta^2})^2} \right)+\frac{\ratio{\beta^4}}{8(1-\beta^2)^2}(x_4-3)(n_4-1)\]
	and
	\[V_F^H=-\frac{1}{2}\log\left( 1-\frac{\ratio{\beta^4}}{(1-{\beta^2})^2} \right)+\frac{\ratio{\beta^4}}{4(1-\beta^2)^2}(x_4-3),\]
\end{cor}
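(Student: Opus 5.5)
The plan is to specialize Theorem \ref{thm:Hopfield} to a standard Gaussian hidden spin, check that its hypotheses hold, and then evaluate $\tilde{\Lambda}$ and its derivatives in closed form. Throughout, the assumptions on $J$ and $\bt$ are exactly those of Theorem \ref{thm:Hopfield}, and only the hidden prior needs checking.

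\textbf{Step 1: reduction to the generalized model.} Take $\sigma$ standard Gaussian, so that $\psi(t)=t^2/2$. Then
\[
\sum_{\hidx}\psi\Bigl(\tfrac{\beta}{\sqrt N}\sum_\widx J_{\hidx\widx}\tau_\widx/\|\bt\|\Bigr)=\frac{\beta^2}{2N\|\bt\|^2}\,\bt^TJ^TJ\bt .
\]
Hence $\hat Z_{M,N}(\beta)=Z^H_{M,N}(\beta)$ and $\hat F_{M,N}=F^H_{M,N}$, so the corollary is a statement about $\hat F_{M,N}$.

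\textbf{Step 2: the Gaussian is of Lee--Yang type.} It is symmetric with unit variance. It satisfies $\mathbb{E}[e^{c\sigma^2}]<\infty$ for $c<1/2$. Its moment generating function $\mathbb{E}[e^{z\sigma}]=e^{z^2/2}$ has no zeros at all, so the zero condition holds vacuously. Thus Definition \ref{defn:Lee-Yang} is satisfied.

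\textbf{Step 3: closed forms for $\tilde{\Lambda}$.} For $t<1/2$,
\[
\tilde{\Lambda}(t)=-\tfrac12\log(1-2t),\qquad \tilde{\Lambda}'(t)=(1-2t)^{-1},\qquad \tilde{\Lambda}''(t)=2(1-2t)^{-2}.
\]
All three are finite near $t=\beta^2/2$ whenever $\beta^2<1$. Evaluating at $t=\beta^2/2$ gives:
\begin{itemize}
\item $\tilde{\Lambda}(\beta^2/2)=-\tfrac12\log(1-\beta^2)$, which is the leading term $-\tfrac{M}{2N}\log(1-\beta^2)$ and the limit $-\tfrac{\ratio}{2}\log(1-\beta^2)$;
\item $\tilde\sigma^2=(1-\beta^2)^{-1}$, so $\beta^4\tilde\sigma^4=\beta^4/(1-\beta^2)^2$;
\item $\tilde m_4=2(1-\beta^2)^{-2}(1-\beta^2)^2+1=3$.
\end{itemize}

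\textbf{Step 4: the admissible range of $\beta$.} The condition $\beta<\ratio^{-1/4}\tilde\sigma^{-1}$ is equivalent to $\ratio\beta^4<(1-\beta^2)^2$, that is, $\sqrt{\ratio}\,\beta^2<1-\beta^2$, or $\beta^2<(1+\sqrt{\ratio})^{-1}$. This bound is below $1$, so it also guarantees the finiteness required in Step 3. It is exactly the stated range $\beta\in(0,(1+\sqrt{\ratio})^{-1/2})$.

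\textbf{Step 5: matching the mean and variance.} Substituting into $\tilde V_F$ gives $V^H_F$ immediately. For the mean, the non-logarithmic term becomes
\[
\frac{\ratio\beta^4}{24(1-\beta^2)^2}(x_4-3)(3n_4-3).
\]
With the Gaussian fourth moment $n_4=3$ this equals $\frac{\ratio\beta^4}{4(1-\beta^2)^2}(x_4-3)$. This coincides with the stated $\frac{\ratio\beta^4}{8(1-\beta^2)^2}(x_4-3)(n_4-1)$ at $n_4=3$, so the two expressions for $m^H_F$ agree.

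The argument is entirely bookkeeping. The only points needing care are the Lee--Yang verification in Step 2 and confirming in Step 4 that the high-temperature range keeps $\beta^2/2$ strictly inside the domain $t<1/2$ where $\tilde{\Lambda}$ is finite.
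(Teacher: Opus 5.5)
Your Steps 1--4 are correct and follow the paper's own route: the paper simply specializes Theorem~\ref{thm:Hopfield} to $\psi(t)=t^2/2$. Your computation of $V^H_F$ is also correct. The error is in Step~5, where you substitute $n_4=3$, reading $n_4$ as the fourth moment of the Gaussian hidden spin. In this corollary, $n_4$ is the fourth moment of the \emph{visible} spin, $n_4=\mathbb{E}[\tau_1^4]=N^2\mathbb{E}[v_1^4]$, and it is a free parameter of the visible prior. The phrase ``$n_4=\mathbb{E}[\sigma_1^4]$'' in Theorem~\ref{thm:Hopfield} is a slip. That theorem comes from Theorem~\ref{thm:free-energy} with $\epsilon_u=0$ (via Theorem~\ref{thm:bsk-iid}), where $n_4=N^2\mathbb{E}[v_1^4]$. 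In that proof the $(x_4-3)$ correction to the mean comes from $M^2\sum_{a,\alpha}\tilde u_a^4v_\alpha^4\approx\frac{M}{N}m_4'n_4$. So the hidden spin enters only through $m_4'=\tilde m_4$, and the visible spin only through $n_4$. The factor $(n_4-1)$ in the statement would also be pointless if $n_4$ were the fixed Gaussian moment $3$.

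As written, your argument establishes the mean only for visible priors with $\mathbb{E}[\tau_1^4]=3$. Otherwise the value you obtain, $\frac{d_0\beta^4}{4(1-\beta^2)^2}(x_4-3)$, is wrong. For example, with Rademacher visible spins ($n_4=1$, the classical Hopfield case recovered in the remark after Corollary~\ref{cor:free-energy-gaussian-hopfield}) the correction must vanish. You can check this directly. For fixed spins, the unit-variance field $g_a=\sum_\alpha J_{a\alpha}\tau_\alpha/\|\bt\|$ has fourth cumulant $(x_4-3)\sum_\alpha\tau_\alpha^4/\|\bt\|^4\approx(x_4-3)n_4/N$. Hence $\sum_a\log\mathbb{E}_J e^{\beta^2g_a^2/2}$ already contains $\frac{M}{N}\frac{\beta^4(x_4-3)n_4}{8(1-\beta^2)^2}$, with $n_4$ the visible moment. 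The repair is one line, and you had already written it: keep $n_4$ general and use $\tilde m_4n_4-3=3(n_4-1)$. This gives
\[ \frac{d_0\beta^4\tilde\sigma^4}{24}(x_4-3)(\tilde m_4n_4-3)=\frac{d_0\beta^4}{8(1-\beta^2)^2}(x_4-3)(n_4-1) \]
identically in $n_4$, which is exactly the correction term in $m^H_F$.
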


Our proof of Theorem \ref{thm:Hopfield} is based on the equivalnce between the bipartite SK model with an i.i.d. prior and the generalized Hopfield model. With the normalization 
\begin{equation} \label{eq:renormalization}
	J_{\hidx\widx} = \sqrt{N}\noise  (1\le \hidx\le M, 1\le \widx\le N), \quad \sigma_\hidx = \sqrt{M}u_\hidx, \quad \tau_\widx = \sqrt{N}v_\widx, 
\end{equation}
the partition function and the free energy of a bipartite SK model is defined by
\begin{equation} \label{eq:def_free_bipartite_iid}
	\tilde{Z}_{M,N}(\beta)=\mathbb{E}_{\mathcal{U}\times \mathcal{V}}\left[\exp\left( \beta \sqrt{MN}\sum_{\hidx,\widx}\noise u_\hidx \frac{v_\widx}{\|\V\|} \right) \right],\quad \tilde{F}_{M,N}(\beta)=\frac{1}{N}\log \tilde{Z}_{M,N}.
\end{equation}
(When $\|\V\|=0$, we define $v_\widx /\|\V\|=0$ for all $\widx$.)
Note that the prior is i.i.d. on one side and normalized on the other side. Then, integrating on i.i.d. spin,
\begin{equation}
	\tilde{Z}_{M,N}(\beta)=\mathbb{E}_{\mathcal{V}}\left[\exp\left(\sum_{\hidx}\psi_{u}\left(\sum_{\widx}\beta\sqrt{N}\noise\frac{v_\widx}{\|\V\|} \right) \right) \right]=\mathbb{E}_{\mathcal{V}}\left[\exp\left(\sum_{\hidx}\psi_u\left(\beta\sqrt{N}\frac{\Noisev_\hidx\V}{\|\V\|}\right) \right) \right]
\end{equation}
where $\psi_u(t) := \log\mathbb{E}[\exp(t\sqrt{M}u_\hidx)]$ is the cumulant generating function of $\sqrt{M}u_\hidx$ and $\Noisev_\hidx$ is a $\hidx$-th row vector of $\Noise$. 
In this representation, we refer to $\U$ and $\V /\|\V\|$ as the hidden and visible prior, respectively. Our second main result is the following theorem that identifies the limiting free energy and gives the corresponding fluctuations.

\begin{thm}\label{thm:bsk-iid}
	Suppose that the assumptions in Theorem \ref{thm:Hopfield} holds for $\sqrt{N}\noise$, $\sqrt{M}u_\hidx(1\le \hidx\le M)$, and $\sqrt{N}v_\widx(1\le \widx\le N)$ with the normalization in \eqref{eq:renormalization}. Recall the definition of $\tilde{F}_{M,N}(\beta)$ in \eqref{eq:def_free_bipartite_iid}. Then, the conclusion in Theorem \ref{thm:Hopfield} holds without any change if ${ \hat{F}}_{M,N}(\beta)$ is replaced by $\tilde{F}_{M,N}(\beta)$.	
\end{thm}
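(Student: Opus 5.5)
The plan is to show that $\tilde{Z}_{M,N}(\beta)$ and $\hat{Z}_{M,N}(\beta)$ coincide identically, for every realization of the disorder. Theorem \ref{thm:bsk-iid} then follows from Theorem \ref{thm:Hopfield} at once. The ingredients are the substitution \eqref{eq:renormalization} and the fact that the hidden spins $u_\hidx$ enter the exponent linearly and independently. This is exactly the bipartite representation sketched after \eqref{eq:intro-hopfield}.

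\textbf{Step 1: rewrite the exponent.} Put $J_{\hidx\widx}=\sqrt{N}\noise$, $\sigma_\hidx=\sqrt{M}u_\hidx$ and $\tau_\widx=\sqrt{N}v_\widx$. Since $\|\bt\|=\sqrt{N}\|\V\|$, we have $v_\widx/\|\V\|=\tau_\widx/\|\bt\|$. Hence
\[
\beta\sqrt{MN}\sum_{\hidx,\widx}\noise u_\hidx\frac{v_\widx}{\|\V\|}=\sum_{\hidx}\sigma_\hidx\cdot\frac{\beta}{\sqrt{N}}\sum_{\widx}J_{\hidx\widx}\frac{\tau_\widx}{\|\bt\|}.
\]
The convention $v_\widx/\|\V\|=0$ when $\|\V\|=0$ should be matched to the same convention for $\bt$ in \eqref{eq:intro-hopfield}. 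This event has probability zero, or both sides equal $1$ on it, so it does not affect either partition function.

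\textbf{Step 2: integrate out the hidden layer.} Condition on $\bt$ and use Fubini; the integrand is nonnegative, so Tonelli suffices. Under $\mathcal{U}$ the $\sigma_\hidx$ are i.i.d. copies of $\sigma$, so the conditional expectation factorizes as
\[
\prod_\hidx\mathbb{E}_\sigma\bigl[e^{t_\hidx\sigma}\bigr]=\exp\Bigl(\sum_\hidx\psi(t_\hidx)\Bigr),\qquad t_\hidx=\frac{\beta}{\sqrt N}\sum_\widx J_{\hidx\widx}\frac{\tau_\widx}{\|\bt\|}.
\]
Each factor is finite because $\sigma$ is of Lee--Yang type and therefore has Gaussian tails, so $\psi$ is finite on all of $\mathbb{R}$. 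This gives $\tilde Z_{M,N}(\beta)=\hat Z_{M,N}(\beta)$, and therefore $\tilde F_{M,N}(\beta)=\hat F_{M,N}(\beta)$ pointwise.

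\textbf{Step 3: check the hypotheses.} The assumptions of Theorem \ref{thm:bsk-iid} are imposed on precisely the rescaled variables $J_{\hidx\widx}$, $\sigma_\hidx$ and $\tau_\widx$. Their laws are therefore exactly those required in Theorem \ref{thm:Hopfield}, including the required independence, and the quantities $\tilde\Lambda$, $n_4$ and $x_4$ are the same. Applying Theorem \ref{thm:Hopfield} yields both the limit $\ratio\tilde\Lambda(\beta^2/2)$ and the Gaussian fluctuation with mean $\tilde m_F$ and variance $\tilde V_F$.

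\textbf{Main obstacle and the intended logical order.} There is no analytic difficulty in this reduction. The only care needed is in the normalization bookkeeping ($\sqrt{MN}$ versus $\sqrt N\cdot\sqrt M$) and in the degenerate event $\|\V\|=0$.

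However, the paper indicates that Theorem \ref{thm:Hopfield} is itself proved through this bipartite model. If so, the real logical order is the reverse: one proves Theorem \ref{thm:bsk-iid} directly and deduces Theorem \ref{thm:Hopfield} from the identity above. In that case the substantive work runs as follows:
\begin{itemize}
\item expand the exponent to reach the factor $\exp(\beta^2\|\bs\|^2/2)$;
\item absorb this factor by the quadratic tilt of the hidden prior, whose normalizing constant is $e^{M\tilde\Lambda(\beta^2/2)}$;
\item verify that the tilted law still has sub-Gaussian moments with second moment $\tilde\sigma^2$ and fourth moment $\tilde m_4\tilde\sigma^4$;
\item apply the bipartite multigraph/cycle comparison, with the even-cycle sums $\sum_k(\ratio\beta^4\tilde\sigma^4)^k/(2k)$ producing the logarithms in $\tilde m_F$ and $\tilde V_F$.
\end{itemize}
The hardest step there is controlling the overlaps under the tilted hidden law up to the critical point $\beta<\ratio^{-1/4}\tilde\sigma^{-1}$, where the Lee--Yang property is used to keep the tilt well-behaved.
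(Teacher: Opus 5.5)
Your Steps 1--3 only show that Theorem \ref{thm:bsk-iid} and Theorem \ref{thm:Hopfield} are the same statement in two normalizations, which the paper records in one sentence. In the paper, Theorem \ref{thm:Hopfield} has no independent proof; it is obtained \emph{from} Theorem \ref{thm:bsk-iid}. So ``apply Theorem \ref{thm:Hopfield}'' assumes what is to be proved, as you suspect yourself. All the real content therefore sits in your fallback sketch, and that sketch omits the one ingredient that gives the full range $\beta<\ratio^{-1/4}\tilde\sigma^{-1}$.

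Your Step 1 display is also off by $\sqrt{N}$: $\sqrt{MN}\,\noise u_\hidx=J_{\hidx\widx}\sigma_\hidx$, so the exponent is $\beta\sum_{\hidx}\sigma_\hidx\sum_{\widx}J_{\hidx\widx}\tau_\widx/\|\bt\|$. The identity $\tilde Z_{M,N}=\hat Z_{M,N}$ holds for $\hat Z_{M,N}$ as in Definition \ref{defn:free-energy-hopfield}. The extra $1/\sqrt{N}$ in \eqref{eq:intro-hopfield} must be read as a misprint: with it, $\log\hat Z_{M,N}$ is of order one and the free energy tends to $0$.

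The paper does not redo the tilt and the multigraph comparison here; both are already packaged in Theorem \ref{thm:free-energy} with $\epsilon_u=0$. That theorem's only prior-dependent hypotheses are Condition \ref{cond:prior-separable} for the normalized tilted hidden prior $\U^{M\beta^2/2}/\sigma$ and for $\V$. Its range is $\sqrt{\ratio}\beta^2\sigma^2<\sqrt{\snr_c^u(\beta)\snr_c^v}$, so you need $\snr_c^u$ and $\snr_c^v$ arbitrarily close to $1$. Your bullet ``the tilted law still has sub-Gaussian moments'' does not deliver this. If the tilted spin only has variance proxy $s^2>\tilde\sigma^2$, the standard criterion gives $\snr_c^u<\tilde\sigma^2/s^2$, hence a strictly smaller $\beta$-range; this is the obstruction described in Remark \ref{rmk:free-energy-iid}.

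The missing idea is the pair of facts the paper takes from \cite{Newman-Wu2019,Newman-Wu2019a}. First, the quadratic tilt $e^{tx^2}\mu(\mathrm{d}x)/Z(t)$ of a Lee--Yang law is again of Lee--Yang type. Second, a Lee--Yang variable with unit variance is strictly sub-Gaussian. Hence $\sqrt{M}u_\hidx^{M\beta^2/2}/\sigma$ is strictly sub-Gaussian and every $\snr_c^u<1$ is admissible. Strict sub-Gaussianity of $\sqrt{N}v_\widx$ likewise makes every $\snr_c^v<1$ admissible. Theorem \ref{thm:free-energy} then yields the statement, with $\Lambda=\tilde\Lambda$, $\sigma=\tilde\sigma$ and $m_4'=\tilde m_4$. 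Your remark that Lee--Yang ``keeps the tilt well-behaved'' points the right way, but this specific mechanism is what has to be stated and used.
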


We can also consider the case where the hidden prior is also normalized. The precise result for this case is the following theorem:

\begin{thm}\label{thm:bsk-normalized}
	Suppose that $\sqrt{N}\noise(1\le \hidx\le M, 1\le \widx\le N)$ are i.i.d., symmetric, and strictly sub-Gaussian random variables with variance $1$.
	Suppose that $\sqrt{M}u_\hidx(1\le \hidx\le M)$ are i.i.d., symmetric, and strictly sub-Gaussian random variables with variance $1$, independent of $\noise$.
	Suppose that $\sqrt{N}v_\widx(1\le \widx\le N)$ are i.i.d., symmetric, and strictly sub-Gaussian random variables with variance $1$, independent of $\noise$ and $u_\hidx$.
	Let
	\begin{equation}\label{eq:bsk-normalized-partition-function-free-energy}
		\overline{Z}_{M,N}(\beta)=\mathbb{E}_{\mathcal{U}\times \mathcal{V}}\left[\exp\left( \beta \sqrt{MN}\sum_{\hidx,\widx}\noise\frac{u_\hidx v_\widx}{\|\U\|\|\V\|} \right) \right],\quad \overline{F}_{M,N}(\beta)=\frac{1}{N}\log \overline{Z}_{M,N}.
	\end{equation}
	(When $\|\U\|$ or $\|\V\|=0$, we define $u_\hidx v_\widx /(\|\U\|\|\V\|)=0$ for all $\hidx,\widx$.)
	Then, for any $\beta\in(0,\ratio^{-1 / 4})$,
	\[ \overline{F}_{M,N}(\beta) \to \overline{F}(\beta):= \frac{\ratio\beta^2}{2}\] 
	and
	\[ N\left(\overline{F}_{M,N}(\beta)-\frac{\beta^2M}{2N}\right) \implies \mathcal{N}(\overline{m}_F, \overline{V}_F),\]
	as $M,N\to \infty$, i.e., $N(\overline{F}_{M,N}(\beta)-\beta^2M/(2N))$ converges in distribution to the Gaussian with mean $\overline{m}_F$ and the variance $\overline{V}_F$ as $M,N\to \infty$.
	Here,
	\[  \overline{m}_F =\frac{1}{4}\log( 1-\ratio\beta^4 )+\frac{\ratio\beta^4}{24}(x_4-3)(m_4n_4-3) \] and \[ \overline{V}_F= -\frac{1}{2}\log( 1-\ratio\beta^4)+\frac{\ratio\beta^4}{4}(x_4-3) ,\]
	where we let $m_4=M^2\mathbb{E}[u_1^4], n_4=N^2\mathbb{E}[v_1^4]$ and $x_4=N^2\mathbb{E}[\noise^4]$.	
\end{thm}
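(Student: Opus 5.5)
We follow the multigraph-expansion strategy of \cite{Choo-Han-Lee2026}, adapted to the bipartite setting in which both priors are normalized. Write $H=\beta\sqrt{MN}\sum_{\hidx,\widx}\noise \hat u_\hidx\hat v_\widx$ with $\hat u=\U/\|\U\|$, $\hat v=\V/\|\V\|$. The first step is a comparison with Gaussian noise. We expand $\exp(\beta\sqrt{MN}\noise\hat u_\hidx\hat v_\widx)$ entrywise. Symmetry of $\noise$ kills the odd terms, and strict sub-Gaussianity controls the tails. This lets us write
\[
\overline Z_{M,N}=\mathbb{E}_{\mathcal U\times\mathcal V}\Big[\prod_{\hidx,\widx}\big(1+\tfrac{\beta^2 MN}{2}\noise^2\hat u_\hidx^2\hat v_\widx^2+\cdots\big)\Big],
\]
and organize the product as a sum over bipartite multigraphs on $[M]\sqcup[N]$ with edge multiplicities given by the powers of $\noise$. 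Since the graph is bipartite, there are no self-loops and all cycles are even. The multiplicity-two part produces the factor $\exp(\beta^2 MN\sum_{\hidx\widx}\noise^2\hat u_\hidx^2\hat v_\widx^2/2)$. Its mean is $\exp(\beta^2M\|\hat u\|^2\|\hat v\|^2/2)=\exp(\beta^2M/2)$, because both priors are normalized; this gives the leading term $\beta^2M/(2N)$ in $\overline F_{M,N}$. The fluctuation of $\sum_{\hidx\widx}(N\noise^2-1)\hat u_\hidx^2\hat v_\widx^2$ and the multiplicity-four edges produce the $(x_4-3)$ corrections. After averaging over the priors, the quantities $M\sum\hat u_\hidx^4\to m_4$ and $N\sum\hat v_\widx^4\to n_4$ enter; these are what yield $(m_4n_4-3)$ in the mean.

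The second step isolates the cycle statistics. For even $k\ge4$ we set
\[
C_k=\sum_{\text{bipartite $k$-cycles}}\prod_{\text{edges}}\sqrt{MN}\,X_{e},
\]
normalized so that $C_k$ converges jointly to independent Gaussians. Their variances are $2k\,\ratio^{k/2}/k$ up to the $x_4$-correction for $k=4$. By the bipartite analogue of the multigraph comparison lemma, we aim to show that
\[
\overline Z_{M,N}\,e^{-\beta^2M/2}=(1+o(1))\,\mathbb{E}_{\mathcal U\times\mathcal V}\Big[\prod_{k\text{ even}}\big(1+\beta^k\,\omega_k(\hat u,\hat v)\,C_k+\cdots\big)\Big],
\]
where the overlap weights $\omega_k$ concentrate on deterministic values. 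Concretely, we would show that $\mathbb{E}_{\mathcal U\times\mathcal V}\prod_k(\cdot)$ behaves like $\exp\big(\sum_k(\beta^kC_k-\beta^{2k}\ratio^{k/2}/k)\big)$ in $L^2$. This reduces $\log\overline Z_{M,N}-\beta^2M/2$ to $\sum_{k\ \mathrm{even}\ge4}\big(\beta^k\tilde C_k-\tfrac{\beta^{2k}\ratio^{k/2}}{2k}\big)$ plus deterministic corrections.

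The final step sums the Gaussian contributions. For the variance, $\sum_{j\ge2}\frac{(\ratio\beta^4)^j}{j}\cdot\frac12=-\frac12\log(1-\ratio\beta^4)$. For the mean, the same series gives $\frac14\log(1-\ratio\beta^4)$, and the fourth-moment corrections attach only to the $4$-cycle term $\ratio\beta^4$. Convergence of these series is exactly the condition $\beta<\ratio^{-1/4}$, and the same condition controls the second moment $\mathbb{E}[\overline Z^2]/(\mathbb{E}\overline Z)^2$ through the overlap $R=\langle\hat u,\hat u'\rangle\langle\hat v,\hat v'\rangle$. The law of large numbers for $\overline F_{M,N}$ then follows from the fluctuation statement.

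We expect the main obstacle to be the second-moment/overlap control needed for the comparison. One must show $\mathbb{E}_{\mathcal U^{\otimes2}\times\mathcal V^{\otimes2}}\exp(\beta^2MN R^2/2+\cdots)$ stays bounded for all $\beta<\ratio^{-1/4}$. Here $\sqrt M\langle\hat u,\hat u'\rangle$ and $\sqrt N\langle\hat v,\hat v'\rangle$ are only sub-Gaussian, not Gaussian, after normalization. A naive bound only reaches a smaller $\beta$. The approach we would try first is to truncate on the events $\|\U\|,\|\V\|\in[1-\epsilon,1+\epsilon]$, using concentration from strict sub-Gaussianity. We would then use the product structure to bound the moment generating function of $\sqrt{MN}\langle\hat u,\hat u'\rangle\langle\hat v,\hat v'\rangle$ by conditioning on one side: given $\U,\U'$, the $\V$-overlap is strictly sub-Gaussian with variance proxy $\langle\hat u,\hat u'\rangle^2 N$. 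Iterating this reduces the bound to $\mathbb{E}(1-\beta^2\ratio^{1/2}\cdot\ldots)^{-1/2}$-type quantities, which are finite exactly in the full range $\beta^4\ratio<1$.
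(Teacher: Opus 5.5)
Your overall architecture matches the paper's: strip off $e^{\beta^2M/2}$, expand to fourth order over bipartite multigraphs, and reduce to even simple cycles plus double edges following Aizenman--Lebowitz--Ruelle. There is a genuine gap, however. The step that carries the proof is only asserted (``by the bipartite analogue of the multigraph comparison lemma, we aim to show\ldots''). What you offer in its place, that finitely many overlap weights $\omega_k$ concentrate, does not touch the real difficulty.

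After writing $u_\hidx/\|\U\|=r_\hidx s_\hidx/\sqrt M$ and $v_\widx/\|\V\|=r'_\widx t_\widx/\sqrt N$ with Rademacher signs $r,r'$, each multicycle $\gamma$ carries a prior factor $V(\gamma)=\mathbb E[\mathds 1(\Omega)\prod_\hidx s_\hidx^{d_\hidx}\prod_\widx t_\widx^{d_\widx}]$. Orthogonality then gives $\mathbb E[(A_1-R_1)^2]=\sum_{\gamma\in\mathcal B^1}(\tau_1'/M)^{m^\gamma}(V(\gamma)-1)^2$. For bounded-size $\gamma$ this is easy: $1-V(\gamma)=o(1)$ on degree-two graphs, and counting handles the rest. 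But $V(\gamma)>1$ as soon as a non-Rademacher vertex has degree at least $4$, and you need control uniformly over arbitrarily large $\gamma$.

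The paper's key idea is to compare two \emph{exact} limits. Take an auxiliary likelihood ratio with Gaussian noise at $\snr=\tau_1=\ratio\beta^2$. Hermite orthogonality turns its second moment into $\sum_{\gamma\in\mathcal B^4}M^{-m^\gamma}V(\gamma)^2\mathbb E[W(\gamma)^2]$. Lemma~\ref{lem:prior-strong} (overlap CLT plus uniform integrability from the bound at a slightly larger parameter) shows this tends to $(1-\ratio\beta^4)^{-1/2}$, the same value as for the Rademacher prior. Subtracting gives $\sum_\gamma(V(\gamma)^2-1)M^{-m^\gamma}\mathbb E[W(\gamma)^2]=o(1)$. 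Combined with the Rademacher cutoff and the finite-graph dichotomy, this yields Lemma~\ref{lem:1-cycle}. Double, triple and quadruple edges are then added via the factor $e^{4m_4n_4\tau_2/\ratio}$ and similarity classes. Mere boundedness of a second moment, which is all your plan aims for, does not produce this cancellation.

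The overlap quantity you name as the main obstacle is also misstated. $\mathbb E\exp(\beta^2MNR^2/2)$ cannot stay bounded for any $\beta>0$: $\sqrt{MN}R\Rightarrow Z_1Z_2$ with independent standard Gaussians, and $\mathbb E\exp(\beta^2Z_1^2Z_2^2/2)=\infty$. The relevant quantity, for instance $\mathbb E_{\mathcal{\Noise}}[\overline Z^2]/(\mathbb E_{\mathcal{\Noise}}\overline Z)^2$ for Gaussian disorder, is $\mathbb E\exp(\beta^2MR)$, which is linear in $R$. For strictly sub-Gaussian priors it is bounded throughout $\sqrt{\ratio}\beta^2<1$ by the one-line AM--GM argument of Lemma~\ref{lem:prior-joint}, or by your conditioning, so this is not where the work lies.

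Two smaller points. First, symmetry of $\noise$ does not remove odd powers from $\overline Z$ itself, only from $\mathbb E_{\mathcal{\Noise}}\overline Z$. The linear terms are exactly what produce the cycles in your second step, and it is symmetry of the \emph{priors} that restricts the sum to even-degree multigraphs. Second, with $x=\ratio\beta^4$ one has $\frac12\sum_{j\ge2}x^j/j=-\frac12\log(1-x)-\frac x2$, not $-\frac12\log(1-x)$. The $-x/2$ must combine with the double-edge variance $x(x_4-1)/4$ to give the term $x(x_4-3)/4$ in $\overline V_F$.
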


\begin{rmk} \label{rem:Lambda}
In Theorems \ref{thm:Hopfield}, \ref{thm:bsk-iid}, and \ref{thm:bsk-normalized}, the limiting free energy is $\ratio \Lambda_M(\beta^2/2)$, where
\[ \Lambda_M(t) := \frac{1}{M}\log\left( \mathbb{E}\left[e^{tM \|\U\|^2}\right] \right) \]
When $\U$ is also normalized, then $\Lambda_M(t) =t$, which leads to the universal limiting free energy $\ratio \beta^2 / 2$.
When the entries of $\U$ are i.i.d., we have
\begin{equation}\label{eq:Lambda-iid}
	\Lambda_M(t) = \frac{1}{M}\log\left( \mathbb{E}\left[e^{tMu_\hidx^2}\right] \right)^M=\tilde{\Lambda}(t).
\end{equation}
\end{rmk}

\begin{rmk} \label{rem:Lee-Yang}
Our definition for the Lee--Yang type random variable in Definition \ref{defn:Lee-Yang} is from \cite{Newman-Wu2019a}.
The Lee--Yang class is a classical and important class of distributions in ferromagnetic statistical mechanics.
Although the Lee--Yang condition here does not play a role as in ferromagnetic statistical mechanics, it provides a broad, explicit class of priors---including Rademacher, Gaussian and $\phi^4$ type spins---for which the change of measure arising in the proof preserves sharp sub-Gaussianity.
\end{rmk}

\begin{rmk} \label{rem:criticality}
Note that for Theorems \ref{thm:bsk-iid} and \ref{thm:bsk-normalized}, we recovered the results for the full high temperature regime since the mean and the variance in Theorem \ref{thm:bsk-iid} (resp. Theorem \ref{thm:bsk-normalized}) diverges as $\beta\nearrow\ratio^{-1 /4}\tilde{\sigma}^{-1}$ (resp. $\beta\nearrow\ratio^{-1 /4}$).
\end{rmk}

Finally, we state the result on the log likelihood ratio.
We will assume the existence of the densities of the disorder and the following regularity conditions for them.
\begin{assum}\label{assum:density}
	Suppose that $\sqrt{N}\noise(1\le \hidx \le M, 1\le \widx\le N)$ are i.i.d. with density function $p$, and have variance $1$.
	We assume the following:
	\begin{enumerate}
		\item $p$ is smooth, positive everywhere, and symmetric (about $0$).
		\item $p$ and their all derivatives vanish at infinity.
		\item $p^{(s)}/p$ are polynomially bounded, i.e., for any positive integer $s$ there exist constants $C_s, r_s >0$, independent of $N$, such that $|p^{(s)}(x)/p(x)|\le C_s(1+|x|)^{r_s}$ uniformly on $x$. (Here, $p^{(s)}$ is the $s$-th derivative of $p$.)
	\end{enumerate}
\end{assum}

Our result for the fluctuation of the log likelihood ratio is the following theorem. 
\begin{thm}\label{thm:log-LR}
	Suppose that Assumption \ref{assum:density} holds for $\Noise$.
	Suppose that $\sqrt{M}u_\hidx(1\le \hidx\le M)$ are i.i.d., symmetric, and strictly sub-Gaussian random variables with variance $1$, independent of $\noise$.
	Suppose that $\sqrt{N}v_\widx(1\le \widx\le N)$ are i.i.d., symmetric, and strictly sub-Gaussian random variables with variance $1$, independent of $\noise$ and $u_\hidx$
	Define the likelihood ratios by
	\[ \mathcal{L}(\snr, \epsilon_u, \epsilon_v) := \mathbb{E}_{\mathcal{U}\times \mathcal{V}}\left[\prod_{\hidx,\widx}\frac{p(\sqrt{N}\noise-\sqrt{\snr N}u_\hidx v_\widx / (\|\U\|^{\epsilon_u}\|\V\|^{\epsilon_v}))}{p(\sqrt{N}\noise)}\right],\]
	where $\epsilon_u, \epsilon_v \in \{0,1\}$. 
	Define \[ F_p:= \int_{-\infty}^{\infty} \frac{p'(x)^2}{p(x)}\mathrm{d}x, \quad G_p:= \int_{-\infty}^{\infty} \frac{p''(x)^2}{p(x)}\mathrm{d}x.\]
	Then, for any $\snr\in(0,\sqrt{\ratio} / F_p)$,
	\[ \log\mathcal{L}(\snr,\epsilon_u, \epsilon_v) \implies \mathcal{N}(-\rho, 2\rho),\]
	as $M,N\to \infty$, i.e., $\log\mathcal{L}(\snr)$ converges in distribution to the Gaussian with mean $-\rho$ and the variance $2\rho$ as $M,N\to \infty$, where
	\[ \rho:= -\frac{1}{4}\left( \log\left(1-\frac{\snr^2F_p^2}{\ratio}\right)+\frac{\snr^2}{2\ratio}(2F_p^2-G_p) \right) .\]
\end{thm}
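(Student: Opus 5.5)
Starting from the definition, I will Taylor-expand the log of each likelihood factor in the small quantity $\sqrt{\snr N}\,u_\hidx v_\widx/(\|\U\|^{\epsilon_u}\|\V\|^{\epsilon_v})$, which is of order $N^{-1/2}$ since $\sqrt M u_\hidx,\sqrt N v_\widx=O(1)$ and the norms are $1+o(1)$. Set $h=-\log p$ and $x=\sqrt N\noise$, and write $t_{\hidx\widx}=\sqrt{\snr N}\,u_\hidx v_\widx/(\|\U\|^{\epsilon_u}\|\V\|^{\epsilon_v})$. Then
\[ \log\frac{p(x-t)}{p(x)} = t\,h'(x) - \tfrac{t^2}{2}h''(x) + \tfrac{t^3}{6}h'''(x)-\cdots . \]
Assumption \ref{assum:density}(3) guarantees that all moments of $h^{(s)}(x)$ are finite, so the Taylor remainder after the fourth order is negligible after summing over the $MN$ entries. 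Summed over $\hidx,\widx$, the first-order term is a bipartite SK-type Hamiltonian $\sqrt{\snr}\sum_{\hidx,\widx} \tilde X_{\hidx\widx}\, u_\hidx v_\widx\,(\cdots)$ with centered disorder $\tilde X_{\hidx\widx}=N^{-1/2}h'(\sqrt N\noise)$, whose variance is $F_p/N$ because $\mathbb{E}[h'(x)]=0$ and $\mathbb{E}[h'(x)^2]=F_p$. The second-order term I split as $\mathbb{E}[h''(x)]=F_p$ plus a centered fluctuation $h''(x)-F_p$. The deterministic part gives $-\tfrac{\snr F_p}{2}N\sum_{\hidx,\widx}u_\hidx^2v_\widx^2/(\cdots)$, which is essentially $-\tfrac{\snr F_p}{2}\sqrt{N/M}$ times the product of the squared norms. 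This is exactly the quantity that cancels the leading $\exp(\beta^2\|\bs\|^2/2)$-type term coming from the first-order part, since $\mathbb{E}\,\mathcal L=1$. The centered fluctuation part is a second Hamiltonian that is correlated with the first one through $\mathbb{E}[h'(x)(h''(x)-F_p)]$, which vanishes by symmetry of $p$, and through fourth-order joint moments.

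With this decomposition, $\mathcal L$ becomes a generalized bipartite partition function of the kind analyzed in Section \ref{sec:general-results}. The effective inverse temperature is $\beta^2=\snr F_p\sqrt{N/M}$, i.e.\ $\beta^2\to \snr F_p/\sqrt{\ratio}$, and there is a correction tilt. The condition $\beta<\ratio^{-1/4}$ becomes $\snr F_p<\sqrt{\ratio}$, which matches the stated range. Since no quadratic change of measure is needed, each of $\U,\V$ may be i.i.d.\ or normalized, and I invoke the common universality proposition for each choice of $(\epsilon_u,\epsilon_v)$. The plan is to run the bipartite multigraph expansion as in \cite{Choo-Han-Lee2026}: expand $\exp$ of the Hamiltonians, keep only the dominant even cycles in the bipartite multigraph, and compare with the Gaussian case. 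The cycles of length $2k$ contribute $(\beta^4\ratio)^{k}/(2k)$, and summing them gives the $-\frac14\log(1-\snr^2F_p^2/\ratio)$ term. Double edges and the fluctuation of the $h''$ term generate the non-universal correction. Its coefficient is $\frac{\snr^2}{2\ratio}\cdot(\cdots)$, involving $\mathbb{E}[(h'^2-h'')^2]$-type moments, and these I expect to reduce to $2F_p^2-G_p$ via $h'^2-h''=p''/p$ and $\mathbb{E}[(p''/p)^2]=G_p$. The point is that each entry's log-ratio, through order $t^2$, combines into $t\,h'-\tfrac{t^2}{2}h''$, and its conditional second moment involves $p''/p$. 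Terms of order $u_\hidx^4v_\widx^4$ depend on $m_4n_4$, but they enter the mean and the variance identically, so they cancel from the final answer.

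The relation mean $=-\tfrac12$variance then follows without separate computation. From $\mathbb{E}\,\mathcal L=1$ together with uniform integrability, or more cleanly by Le Cam's first lemma / contiguity, a Gaussian limit $\mathcal N(\mu,\sigma^2)$ for $\log\mathcal L$ must satisfy $\mu=-\sigma^2/2$. So it suffices to establish Gaussianity and to compute the variance $2\rho$.

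The main obstacle is the Taylor-expansion step. The disorder $h'(x)$ is centered and has all moments, but it is not strictly sub-Gaussian, and the higher-order terms $t^3h'''$ and $t^4h''''$ summed over $MN$ entries contribute $O(1)$ rather than $o(1)$: the fourth-order term gives a deterministic shift that combines into $G_p$. My first attempt will be a truncation argument. I will restrict to the event where $\max_{\hidx\widx}|\noise|\le N^{-1/2+\delta}$ and control everything by moments, using the polynomial bounds on $p^{(s)}/p$. Then I will apply a second-moment (high-temperature) argument, conditioning on the spins, to show that the ratio between $\mathcal L$ and its expanded version tends to $1$ in probability.
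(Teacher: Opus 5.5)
Your outline (truncate, expand to fourth order, feed the result into the bipartite universality statement, read off $\tau_1=\snr F_p$ and a double-edge weight built from $p''/p$) matches the paper's. The gap is that you expand the wrong object, so the reduction you describe does not go through as written.

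After expanding $\log\frac{p(x-t)}{p(x)}$, you propose to treat $\mathcal{L}$ as a bipartite SK partition function with disorder $N^{-1/2}h'$ plus a correction tilt. Theorem~\ref{thm:free-energy} does not cover this: it requires strictly sub-Gaussian disorder and has no additional random Hamiltonians. Proposition~\ref{prop:general} instead needs a product over entries of degree-four polynomials in $\sqrt N u_\hidx v_\widx$ with \emph{centered} coefficients.

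The missing step is to Taylor-expand the ratio itself:
\[
\frac{p(x-s)}{p(x)}=1+\sum_{n=1}^{4}\frac{(-s)^n p^{(n)}(x)}{n!\,p(x)}+(\text{remainder}),
\]
that is, $P^{(n)}_{\hidx\widx}=(-1)^n\snr^{n/2}p^{(n)}(\sqrt N\noise)/(n!\,p(\sqrt N\noise))$. Re-exponentiating your log-expansion entrywise gives exactly these coefficients, e.g.\ $\tfrac12(h'^2-h'')=p''/(2p)$. These coefficients are centered because $\int p^{(n)}=0$, and they are odd or even by the symmetry of $p$, so every orthogonality hypothesis holds. No tilt is needed, and neither is a cancellation of the order-$N$ term $-\frac{\snr F_p N}{2}\|\U\|^{2(1-\epsilon_u)}\|\V\|^{2(1-\epsilon_v)}$.

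What you call the main obstacle also disappears. In the free-energy proof, strict sub-Gaussianity is used to bound $\mathbb{E}_{\Noise}$ of the integrand so that rare prior events can be discarded by Corollary~\ref{cor:rare-event-cut}. Here that role is played by the exact identity $\mathbb{E}_{\Noise}[\mathcal{L}]=1$. The fifth-order remainder is controlled pointwise on $\Omega^\eta_{\mathcal{U}\times\mathcal{V}}\cap\Omega^\eta_{\Noise}$ through the polynomial bound on $p^{(5)}/p$, with no second-moment comparison at that stage.

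The second moment enters only through Condition~\ref{cond:prior}. Strictly sub-Gaussian priors satisfy it for every $\snr_c<1$ by Lemma~\ref{lem:prior-joint}, which yields the range $\snr F_p<\sqrt{\ratio}$. Inserting $\tau_2=\snr^2G_p/4$ into $\rho$ then produces $2F_p^2-G_p$ directly, rather than as an expectation.

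Several intermediate claims are also wrong.
\begin{itemize}
\item The effective parameter is $\beta^2=\snr F_pN/M\to\snr F_p/\ratio$, not $\snr F_p/\sqrt{\ratio}$. With your value, $\beta<\ratio^{-1/4}$ would give $\snr F_p<1$, and the cycle weights would be $(\snr^2F_p^2)^k/(2k)$, contradicting what you wrote next.
\item The deterministic second-order term is of order $N$, not $\sqrt{N/M}$.
\item Your explanation for why $m_4n_4$ is absent is incorrect: a deterministic shift moves only the mean. In Theorem~\ref{thm:free-energy}, for instance, $m_4'n_4$ appears in $m_F$ but not in $V_F$. Here there is no shift at all because $p^{(4)}/p$ has mean zero, and any residual prior dependence is removed by the universality in Proposition~\ref{prop:general}.
\item Deducing mean $=-\tfrac12\,$variance from $\mathbb{E}\mathcal{L}=1$ requires uniform integrability (contiguity), which you would have to prove; Fatou alone gives only $\mu+\sigma^2/2\le 0$. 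The proposition already delivers $\mathcal{N}(-\rho,2\rho)$, so this shortcut is unnecessary.
\end{itemize}
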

Note that the results are the same for the i.i.d. spikes and normalized spikes.

\subsection{Related works}

The free energy fluctuation of the SK model in the high temperature regime was first proved using the cluster expansion by Aizenman, Lebowitz, and Ruelle \cite{Aizenman-Lebowitz-Ruelle1987} and later proved with other methods \cite{frohlich1987some,comets1995sherrington,talagrand2010mean2}.
For more results about the free energy fluctuation for the variants of the SK model, we refer to \cite{Collins-Woodfin-Le2025} and the references therein.

The free energy fluctuation of the classical Hopfield model in the high temperature regime was first proved using the cluster expansion by Scacciatelli and Tirozzi \cite{scacciatelli1992fluctuation}.
Other Earlier fluctuation results include the classical and Gaussian-pattern Hopfield models \cite{Comets-Kurkova-Trashorras2004,Barra-Guerra2008,Barra-Genovese-Guerra-Tantari2012} and the model with a uniform spherical visible prior and quadratic interaction \cite{Baik-Lee2016}.
Related RBMs with spherical visible and Gaussian hidden units have also been studied through Legendre equivalences of the free energy by Genovese and Tantari \cite{Genovese-Tantari2020}.

The bipartite SK model is a special case of general multi-species SK (MSK) models with Ising spins.
The Parisi formula for the MSK with positive-definite variance profile was first considered by Barra, Contucci, Mingione, and Tantari \cite{Barra-Contucci-Mingione-Tantari2015} and later extended in \cite{panchenko2015,bates2019replica,Dey-Wu2021,Liu2021}. 
The free energy fluctuation was proved by Dey and Wu \cite{Dey-Wu2023}. 
The limiting free energy for the bipartite spherical SK (BSSK) model was obtained by Auffinger and Chen \cite{auffinger2014free} for small enough $\beta$. 
The general result for the BSSK model, including the free energy fluctuation, was first proved in \cite{Baik-Lee2016} and later extended in \cite{collins2025free}.

The spiked rectangular model was first introduced by Johsntone \cite{Johnstone2001}. 
The fluctuations of the log likelihood ratio below the classical BBP transition \cite{BBP2005} was considered by Onatski, Moreira, and Hallin \cite{Onatski2013,Onatski2014} and Johnstone and Onatski \cite{Onatski2015}. 
The optimal threshold for the detection problem was considered in \cite{lesieur2015mmse,Perry-Wein-Bandeira-Moitra2018}. 
The asymptotic normality of the log likelihood ratio for general prior was proved by El Alaoui and Jordan \cite{Alaoui-Jordan2018}. 
The fluctuation of the log likelihood ratio and the linear spectral statistics of the associated Gram matrix was considered in \cite{jung2021,Banerjee-Ma2022,jung2024detection}.

\subsection{Organization of the paper}
The rest of the paper is organized as follows: In Section \ref{sec:general-results}, we state and explain more general statements of our results.
In Section \ref{sec:thm-proof}, we prove the main results from the general results in \ref{sec:general-results}, including the quadratic change of measure.
In Sections \ref{sec:multigraph} and \ref{sec:prop-proof}, we develop the bipartite multigraph expansion and prove the universal result, Proposition \ref{prop:general}. Technical proofs and estimates are collected in Section \ref{sec:technical}.

\section{General results}\label{sec:general-results}

In this section, we state our main results, Theorem \ref{thm:free-energy} and Theorem \ref{thm:lr-additive}, which generalize Theorem \ref{thm:bsk-normalized}, Theorem \ref{thm:bsk-iid}, and Theorem \ref{thm:log-LR}.
We assume that the disorder in both the free energy and the log likelihood ratio is given as a Rectangular matrix.
The definition of a Rectangular matrix is as follows:

\begin{defn}(Rectangular matrix)\label{defn:rect}
	We say a $M \times N$ random matrix	$\Noise=(\noise)$ is a (real) rectangular matrix if the entries $\noise (1\le \hidx \le M, 1\le \widx\le N)$ are i.i.d. centered real random variables satisfying the following conditions:
	\begin{enumerate}
		\item The variance of the entries are normalized, i.e., $\mathbb{E}[\noise^2]=N^{-1}$ for all $\hidx, \widx$. 
		\item For any integer $d>2$, there exists a constant $B_d$, independent of $M, N$, such that $\mathbb{E}[|\sqrt{N}\noise|^d]\le B_d$.
		\item $M / N \to \ratio \in (0,\infty)$ as $M, N\to \infty$.
	\end{enumerate}
\end{defn}

Similarly, we use the following definition for the prior in both the free energy and the log likelihood ratio.
\begin{defn}[Prior]\label{defn:prior}
	For a random variable $\U=(u_1, \ldots,u_M)\in \mathbb{R}^M$ whose distribution is given by a prior $\mathcal{U}$, we assume that for $\hidx=1,\ldots,M$,
	\begin{itemize}
		\item The entries $u_\hidx$'s are i.i.d. and the distribution of $\sqrt{M}u_\hidx$ does not depend on $M$.
		\item The entries $u_\hidx$'s are symmetric (about $0$) and their variances are normalized so that $\mathbb{E}[u_\hidx^2]=M^{-1}$.
		\item For any integer $s>2$, there exists a constant $C_s^u$ such that $\mathbb{E}[|\sqrt{M}u_\hidx|^s]\le C_s^u$.
	\end{itemize}
	Similarly, for a random variable $\V=(v_1, \ldots,v_N)\in \mathbb{R}^N$ whose distribution is given by a prior $\mathcal{V}$, we assume that for $\widx=1,\ldots,N$,
	\begin{itemize}
		\item The entries $v_\widx$'s are i.i.d. and the distribution of $\sqrt{N}v_\widx$ does not depend on $N$.
		\item The entries $v_\widx$'s are symmetric (about $0$) and their variances are normalized so that $\mathbb{E}[v_\widx^2]=N^{-1}$.
		\item For any integer $s>2$, there exists a constant $C_s^v$ such that $\mathbb{E}[|\sqrt{N}v_\widx|^s]\le C_s^v$.
	\end{itemize}
	We further assume that $u_\hidx$'s and $v_\widx$'s are independent. 
\end{defn}

\begin{defn}[Coordinatewise symmetric event]
	We say an event $\Omega$ in $\mathcal{U}$ is coordinatewise symmetric when $\Omega$ holds with $(u_1, \ldots , u_{\hidx-1}, -u_\hidx, u_{\hidx+1}, \ldots ,u_M)$ if $\Omega$ holds with $(u_1,\ldots,u_{\hidx-1}, u_\hidx, u_{\hidx+1}, \ldots u_M)$ for any $1\le \hidx\le M$.
\end{defn}

Our proof relies on the boundedness of the second moment, which is defined as following condition.
\begin{cond}[Assumption 2.3 in \cite{Choo-Han-Lee2026}]\label{cond:prior-separable}
	Let $\snr_c>0$ be a given constant.	
	For a prior $\U \in \mathbb{R}^M$, there exists a sequence of coordinatewise symmetric events $\Omega_M$ in $\mathcal{U}$ with $\mathbb{P}_{\mathcal{U}}(\Omega_M^c)=o(1)$ such that
	\[ \mathbb{E}_{\U,\U'\sim\mathcal{U}}\left[\mathds{1}(\Omega_M(\U))\mathds{1}(\Omega_M(\U'))\exp\left( \frac{M\snr_c \langle\U,\U'\rangle^2}{2} \right) \right]=O(1).\]
\end{cond}
Here, the notation $\U,\U'\sim\mathcal{U}$ means that the spin vectors $\U$ and $\U'$ are independently drawn from the same prior distribution $\mathcal{U}$.
If $\sqrt{M}u_1, \ldots ,\sqrt{M}u_M$ are i.i.d. sub-Gaussian random variable with the variance proxy $\sigma^2$, it is well known that $w_c<1 /\sigma^2$ satisfies Condition \ref{cond:prior-separable} (see, Section 2.3 in \cite{Choo-Han-Lee2026}).

In the statement and the proof of our main results, we use the bipartite version of above condition.
\begin{cond}\label{cond:prior}
	Let $\snr_c>0$ be a given constant.
	There exists a sequence of coordinatewise symmetric events $\Omega_{M,N}$ in $\mathcal{U}\times \mathcal{V}$ with $\mathbb{P}_{\mathcal{U}\times \mathcal{V}}(\Omega_{M,N}^c)=o(1)$ such that
	\[ \mathbb{E}_{\U,\U'\sim\mathcal{U},\V,\V'\sim\mathcal{V}}\left[\mathds{1}(\Omega_{M,N}(\U,\V))\mathds{1}(\Omega_{M,N}(\U',\V'))\exp\left(\sqrt{MN}\snr_c \langle\U,\U'\rangle \langle\V,\V'\rangle \right) \right]=O(1).\]
\end{cond}
From Condition \ref{cond:prior-separable}, we can reconstruct the Condition \ref{cond:prior}.
\begin{lem}\label{lem:prior-joint}
	Assume that a constant $\snr_c^u> 0$ satisfies Condition \ref{cond:prior-separable} with a sequence of events $\Omega_M$ in $\mathcal{U}$.
	Assume that a constant $\snr_c^v>0$ satisfies Condition \ref{cond:prior-separable} with a prior $\V\in \mathbb{R}^N$, and a sequence of events $\Omega_N$ in $\mathcal{V}$.
	Then, $\snr_c=\sqrt{\snr_c^u\snr_c^v}$ satisfies Condition \ref{cond:prior} with $\Omega_{M,N}(\U,\V)=\Omega_M(\U)\cap\Omega_N(\V)$.
\end{lem}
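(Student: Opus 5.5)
The idea is to bound the bilinear exponent by a sum of squares and then factor the expectation. For any $\gamma>0$, AM--GM gives
\[ \sqrt{MN}\,\snr_c\,\langle\U,\U'\rangle\langle\V,\V'\rangle \le \frac{\snr_c}{2}\left(\gamma M\langle\U,\U'\rangle^2+\gamma^{-1}N\langle\V,\V'\rangle^2\right). \]
We choose $\gamma$ so that each term matches the corresponding separable condition:
\[ \snr_c\gamma=\snr_c^u, \qquad \snr_c\gamma^{-1}=\snr_c^v . \]
This is consistent precisely when $\snr_c^2=\snr_c^u\snr_c^v$, i.e. $\snr_c=\sqrt{\snr_c^u\snr_c^v}$, with $\gamma=\sqrt{\snr_c^u/\snr_c^v}$. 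The exponent is then at most
\[ \frac{M\snr_c^u\langle\U,\U'\rangle^2}{2}+\frac{N\snr_c^v\langle\V,\V'\rangle^2}{2}. \]

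Next we set $\Omega_{M,N}(\U,\V)=\Omega_M(\U)\cap\Omega_N(\V)$ and check that it is admissible.
\begin{itemize}
\item \emph{Coordinatewise symmetry.} Flipping a coordinate of $\U$ leaves $\Omega_N(\V)$ unchanged and preserves $\Omega_M(\U)$. The same holds for flips in $\V$. Hence $\Omega_{M,N}$ is coordinatewise symmetric in $\mathcal{U}\times\mathcal{V}$.
\item \emph{Small complement.} By the union bound,
\[ \mathbb{P}(\Omega_{M,N}^c)\le \mathbb{P}_{\mathcal{U}}(\Omega_M^c)+\mathbb{P}_{\mathcal{V}}(\Omega_N^c)=o(1). \]
\end{itemize}

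Finally, the indicators factor as
\[ \mathds{1}(\Omega_{M,N}(\U,\V))\,\mathds{1}(\Omega_{M,N}(\U',\V'))=\mathds{1}(\Omega_M(\U))\,\mathds{1}(\Omega_M(\U'))\,\mathds{1}(\Omega_N(\V))\,\mathds{1}(\Omega_N(\V')). \]
Since the bound is an exponential of a sum, the integrand is dominated by a product of a function of $(\U,\U')$ and a function of $(\V,\V')$. The pairs $(\U,\U')$ and $(\V,\V')$ are independent under the product prior (Definition \ref{defn:prior}). So the expectation is bounded by the product of the two expectations in Condition \ref{cond:prior-separable}, each of which is $O(1)$. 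The product is therefore $O(1)$.

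The only delicate point is sign: $\langle\U,\U'\rangle\langle\V,\V'\rangle$ can be negative, but AM--GM bounds it from above in every case, and an upper bound on the exponent is all that is needed. No step is a genuine obstacle; the main thing to verify is the choice of $\gamma$ that balances the two constants.
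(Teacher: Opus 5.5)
Your proof is correct and follows the paper's argument. The paper also rests on the AM--GM bound $\sqrt{MN\snr_c^u\snr_c^v}\langle\U,\U'\rangle\langle\V,\V'\rangle\le \frac{M\snr_c^u\langle\U,\U'\rangle^2}{2}+\frac{N\snr_c^v\langle\V,\V'\rangle^2}{2}$, which is your weighted inequality with the balanced choice $\gamma=\sqrt{\snr_c^u/\snr_c^v}$, and it notes the coordinatewise symmetry of $\Omega_M(\U)\cap\Omega_N(\V)$. You additionally spell out the union bound for the complement and the factorization of the expectation via independence of $(\U,\U')$ and $(\V,\V')$, both of which the paper leaves implicit.
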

\begin{proof}
	This immediately follows from the AM-GM inequality,
	\[ \sqrt{MN\snr_c^u\snr_c^v}\langle\U,\U'\rangle \langle\V,\V'\rangle\le \frac{M\snr_c^u \langle\U,\U'\rangle^2}{2}+\frac{N\snr_c^v \langle\V,\V'\rangle^2}{2}.\]
	Note that $\Omega_{M,N}(\U,\V)=\Omega_M(\U)\cap\Omega_N(\V)$ is coordinatewise symmetric.
\end{proof}
Thus, if $\sqrt{M}u_\hidx$'s are i.i.d. sub-Gaussian with variance proxy $\sigma_u^2$ and $\sqrt{N}v_\widx$'s are i.i.d. sub-Gaussian with variance proxy $\sigma_v^2$, then $\snr_c < 1 /(\sigma_u \sigma_v)$ satisfies Condition \ref{cond:prior}.

\subsection{Free energy of generalized bipartite SK model}
We define the partition function and the free energy of a generalized bipartite SK model, which includes both cases when $u_\hidx$ is i.i.d.($\epsilon_u=0$) and normalized($\epsilon_u=1$).
\begin{defn}\label{defn:free-energy}
	For an $M\times N$ rectangular matrix $\Noise=(\noise)$ and a prior $\mathcal{U}$ and $\mathcal{V}$, we define the partition function $Z_{M,N}\equiv Z_{M,N}(\beta)$ and the free energy $F_{M,N}\equiv F_{M,N}(\beta)$ at inverse temperature $\beta >0$ by
	\[ Z_{M,N}(\beta)=\mathbb{E}_{\mathcal{U}\times \mathcal{V}}\left[\exp\left( \beta\sqrt{MN} \sum_{\hidx,\widx}\noise \frac{u_\hidx v_\widx}{\|\U\|^{\epsilon_u} \|\V\|}\right)\right], \quad F_{M,N}(\beta)=\frac{1}{N}\log Z_{M,N}, \]
	for $\epsilon_u\in\{0,1\}$.
	Here, when $\|\U\|$ or $\|\V\|=0$, we define $u_\hidx v_\widx / (\|\U\|^{\epsilon_u}\|\V\|)=0$ for all $\hidx, \widx$.
\end{defn}

To extract the limiting free energy, we use a change-of-measure technique.
We define the quadratic measure tilting as follows:
\begin{defn}\label{defn:quadratic-tilt}
	Let a random variable $X$ has a law $\mu$.
	For each $t\in \mathbb{R}$ such that $Z(t):=\mathbb{E}\left[e^{tX^2}\right]<\infty$, let $X^t$ be a random variable with the quadratically tilted law
	\[ \mu^t(\mathrm{d}x):=\frac{e^{tx^2}}{Z(t)} \mu(\mathrm{d}x).\]
	We refer to $X^t$ as the tilted version of $X$.	
\end{defn}

Our result on the free energy, which generalizes Theorem \ref{thm:bsk-normalized} and Theorem \ref{thm:bsk-iid}, is as follows:
\begin{thm}\label{thm:free-energy}
	Recall the defintion of the free energy $F_{M,N}(\beta)$ in Definition \ref{defn:free-energy}.
	Suppose that $\Noise$ is a Rectangular matrix, $\sqrt{N}\noise$ is symmetric, and strictly sub-Gaussian.
	Define
	\begin{equation}\label{eq:Lambda}
		\Lambda(t)=\frac{1}{M}\log\mathbb{E}_{\U}\left[e^{tM\|\U\|^{2(1-\epsilon_u)}}\right].
	\end{equation}
	Assume that in the small neighborhood of $t=\beta^2 / 2$, $\Lambda, \Lambda', \Lambda''<\infty$.
	Define
	\begin{equation}\label{eq:tilted-moments}
		\sigma^2 = \Lambda'\left( \frac{\beta^2}{2} \right),\quad m'_4=\begin{dcases}\tilde{m}_4=\Lambda''\left( \frac{\beta^2}{2} \right)\sigma^{-4} +1  & (\epsilon_u=0) \\m_4=M^2\mathbb{E}[u_1^4] & (\epsilon_u=1) \end{dcases}.
	\end{equation}
	Suppose that $\snr_c^u(\beta)$ satisfies Condition \ref{cond:prior-separable} with $\U^{(1-\epsilon_u)M\beta^2 /2} / \sigma$, a coordinatewise tilted version of $\U$ with normalization.
	Suppose that $\snr_c^v$ satisfies Condition \ref{cond:prior-separable} with $\V$.
	Let $\snr_c(\beta) = \sqrt{\snr_c^u(\beta)\snr_c^v}$.
	Then, for any $\beta>0$ that satisfies $\sqrt{\ratio}\beta^2\sigma^2 < \snr_c(\beta)$,
	\[ F_{M,N}(\beta) \to F(\beta):= \ratio\Lambda\left( \frac{\beta^2}{2} \right).\] and
	\[ N\left(F_{M,N}(\beta)-\frac{M}{N}\Lambda\left(\frac{\beta^2}{2}\right)\right) \implies \mathcal{N}(m_F, V_F),\]
	as $M,N\to \infty$, i.e., $NF_{M,N}(\beta)-M\Lambda(\beta^2 /2)$ converges in distribution to the Gaussian with mean $m_F$ and the variance $V_F$ as $M,N\to \infty$.
	Here,
	\[  m_F =\frac{1}{4}\log\left( 1-\ratio\beta^4\sigma^4 \right)+\frac{\ratio\beta^4\sigma^4}{24}(x_4-3)(m_4'n_4-3) \] and \[ V_F= -\frac{1}{2}\left( \log\left( 1-\ratio\beta^4\sigma^4 \right)-\frac{\ratio\beta^4\sigma^4}{2}(x_4-3)\right) ,\]
	where we let $n_4=N^2\mathbb{E}[v_1^4]$ and $x_4=N^2\mathbb{E}[\noise^4]$.	
\end{thm}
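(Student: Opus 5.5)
The proof will extract the deterministic leading term $M\Lambda(\beta^2/2)$ by a Gaussian integration over the disorder, absorb it into the hidden prior by the quadratic tilt of Definition \ref{defn:quadratic-tilt}, and then apply the bipartite multigraph universality result (Proposition \ref{prop:general}) to the tilted model.

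\emph{Step 1: the deterministic leading term.} Write $Z_{M,N}=\mathbb{E}_{\U,\V}[\exp(\beta\sqrt{MN}\,\U_*^T\Noise\V_*)]$ with $\U_*=\U/\|\U\|^{\epsilon_u}$ and $\V_*=\V/\|\V\|$. Each exponent $\beta\sqrt{MN}\noise u_{*\hidx}v_{*\widx}$ is of order $N^{-1/2}$ in a typical configuration. Expanding $\log\mathbb{E}_{\Noise}$ term by term gives a leading quadratic contribution
\[
\tfrac{\beta^2}{2}MN\cdot N^{-1}\sum_{\hidx,\widx}u_{*\hidx}^2v_{*\widx}^2=\tfrac{\beta^2}{2}M\|\U_*\|^2 .
\]
This equals $M\beta^2/2$ when $\epsilon_u=1$ and $\tfrac{\beta^2}{2}M\|\U\|^2$ when $\epsilon_u=0$.

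\emph{Step 2: the tilt and the reduction.} I would factor $e^{\beta^2M\|\U\|^{2(1-\epsilon_u)}/2}$ out of $Z_{M,N}$. Its prior expectation is exactly $e^{M\Lambda(\beta^2/2)}$, so
\[
Z_{M,N}=e^{M\Lambda(\beta^2/2)}\,\mathbb{E}_{\U^{t},\V}\bigl[\exp\bigl(\beta\sqrt{MN}\,\U_*^T\Noise\V_*-\tfrac{\beta^2}{2}M\|\U_*\|^2\bigr)\bigr],
\]
where $\U^t$ is the coordinatewise tilted prior with $t=(1-\epsilon_u)M\beta^2/2$. Rescaling by $\sigma$ makes $\U^t/\sigma$ satisfy Definition \ref{defn:prior}, with effective inverse temperature $\beta\sigma$ and fourth moment $m_4'$. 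The identities $\sigma^2=\Lambda'$ and $m_4'=\Lambda''\sigma^{-4}+1$ follow from differentiating the cumulant generating function of $u^2$.

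The remaining factor is a normalized bipartite partition function. It has a centered exponent, and its second-moment control is Condition \ref{cond:prior} with $\snr_c(\beta)$, which Lemma \ref{lem:prior-joint} supplies from the hypotheses on $\U^t/\sigma$ and $\V$. I therefore expect Proposition \ref{prop:general} to apply for $\sqrt{\ratio}\beta^2\sigma^2<\snr_c(\beta)$. It should give Gaussian fluctuations of the log of this factor, with mean $\frac14\log(1-\ratio\beta^4\sigma^4)$ plus a fourth-cumulant correction and the stated variance.

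\emph{Step 3: identifying the limit.} The means and variances come from cycle counts in the bipartite multigraph expansion. Only even cycles occur, and each $2k$-cycle contributes $(\ratio\beta^4\sigma^4)^k/(4k)$, so the sums over $k$ produce the logarithms. Double edges produce the $(x_4-3)$ corrections. In the mean, the vertex weights give the factor $m_4'n_4$, which becomes $(m_4'n_4-3)$ after the Gaussian part of the fourth moment is subtracted. The convergence $F_{M,N}\to\ratio\Lambda(\beta^2/2)$ then follows immediately, since the fluctuation is $O(1)$ and is divided by $N$.

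\emph{Main obstacle.} The hardest step is making Step 1 rigorous in general (non-Gaussian) disorder, where the quadratic term is not exact. The higher cumulants of $\sqrt N\noise$ produce terms like $N^{-1}\sum u_*^4v_*^4\cdot M$ and overlap corrections. These must either be shown negligible or absorbed into the multigraph weights, consistently with the fourth-moment terms in $m_F$ and $V_F$. I would handle this inside the expansion of Proposition \ref{prop:general}, and use strict sub-Gaussianity of $\sqrt N\noise$ to bound the truncation errors uniformly.
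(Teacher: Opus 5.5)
Your architecture (exact tilt, rescaling by $\sigma$, Lemma \ref{lem:prior-joint}, then Proposition \ref{prop:general}) matches the paper's. There is, however, a genuine gap in how you treat the fourth-order disorder term, and that term is exactly the source of $m_4'n_4$ in $m_F$.

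You propose to handle it ``inside the expansion of Proposition \ref{prop:general}'' and credit $m_4'n_4$ to vertex weights. This cannot work, for two reasons. First, Proposition \ref{prop:general} requires $\mathbb{E}[P^{(n)}_{\hidx\widx}]=0$ for $n=1,\dots,4$. Second, its conclusion $\mathcal{N}(-\rho,2\rho)$ depends only on $\tau_1,\tau_2,\ratio$: Lemmas \ref{lem:1-cycle}--\ref{lem:4-cycle} show the vertex weights $V(\gamma)$ drop out, so no prior moment can appear in its output.

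Set $a_{\hidx\widx}=\beta\sigma\sqrt M\,\tilde u_\hidx v_\widx/(\|\tilde{\U}\|^{\epsilon_u}\|\V\|)$ and expand each factor $\exp(a_{\hidx\widx}\sqrt N\noise-a_{\hidx\widx}^2/2)$. The coefficients are $a_{\hidx\widx}^n\mathrm{He}_n(\sqrt N\noise)/n!$, and the quartic one has mean $a_{\hidx\widx}^4(x_4-3)/24\neq0$ for non-Gaussian disorder.

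The missing step is to recenter this term and split off the factor $\prod_{\hidx,\widx}(1+(x_4-3)a_{\hidx\widx}^4/24)$. On a high-probability event this equals $(1+o(1))\exp\bigl(\frac{\beta^4\sigma^4M}{24N}(x_4-3)(M\sum_\hidx\tilde u_\hidx^4)(N\sum_\widx v_\widx^4)\bigr)$. Concentration $M\sum_\hidx\tilde u_\hidx^4\to m_4'$ and $N\sum_\widx v_\widx^4\to n_4$ then gives the deterministic shift $\frac{\ratio\beta^4\sigma^4}{24}(x_4-3)m_4'n_4$; this is why the cut events must carry fourth- and eighth-moment conditions. Only after this does Proposition \ref{prop:general} apply, with $\tau_1=\ratio\beta^2\sigma^2$ and $\tau_2=\ratio^2\beta^4\sigma^4(x_4-1)/4$.

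Relatedly, the ``$-3$'' in $(m_4'n_4-3)$ is not a subtracted Gaussian part of the fourth moment. It is the term $-\frac{\ratio\beta^4\sigma^4}{8}(x_4-3)$ inside $-\rho$, which comes from the double edges ($\tau_2$) combined with the absent $k=1$ term of the cycle series.

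Your stated main obstacle is also misplaced. Step 1 never has to be made rigorous, because the factorization in Step 2 is an exact identity: $\sum_{\hidx,\widx}u_{*\hidx}^2v_{*\widx}^2=\|\U_*\|^2$ since $\V_*$ is a unit vector.

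Strict sub-Gaussianity of $\sqrt N\noise$ is needed for the one-sided bound $\mathbb{E}_{\Noise}[e^{\beta\sqrt{MN}\,\U_*^T\Noise\V_*}]\le e^{\beta^2M\|\U_*\|^2/2}$. This bound makes the $\Noise$-expectation of the tilted integrand at most $1$, so Lemma \ref{lem:rare-event-cut} can discard atypical $(\U,\V)$. In particular it lets you restrict to events on which the fifth-order Taylor remainder is negligible, the truncated product is nonnegative, and the moment sums concentrate.

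For $\epsilon_u=0$ you must also check that these events are rare under the tilted law. That uses finiteness of $\Lambda$ slightly beyond $\beta^2/2$, which gives the tilted coordinates sub-Gaussian tails and moments of all orders.
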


We prove Theorem \ref{thm:free-energy} in Section \ref{subsec:proof-free-energy}.
Note that $\sqrt{M}u_\hidx^{(1-\epsilon)M\beta^2 /2}$ does not depend on $M$ since
\[ \sqrt{M}u_\hidx^{(1-\epsilon)M\beta^2 /2} \equiv \left(\sqrt{M}u_\hidx\right)^{(1-\epsilon)\beta^2 /2},\]
and $\sqrt{M}u_\hidx$ does not depend on $M$.

\begin{rmk}\label{rmk:free-energy-normalized}
	When the prior is normalized, i.e. $\epsilon_u=1$, since $\Lambda(t)=t$ and $\snr_c^u(\beta)\equiv \snr_c^u$ which satisfies Condition \ref{cond:prior-separable} with $\U$, Theorem \ref{thm:free-energy} is equivalent to Theorem \ref{thm:bsk-normalized} without any change for any $\beta \in (0, (\snr_c^u \snr_c^v / \ratio)^{1/4})$.
\end{rmk}

\begin{rmk}\label{rmk:free-energy-iid}
	When the prior is i.i.d., i.e. $\epsilon_u=0$, if $\snr_c^u(\beta)$ satisfies Condition \ref{cond:prior-separable} with $\U^{M\beta^2 /2} / \sigma$, then there exists a sequence of coordinatewise symmetric events $\Omega_M$ in $\mathcal{U}$ with $\mathbb{P}_{\mathcal{U}}(\Omega_M(\U^{M\beta^2 /2} /\sigma)^c)=o(1)$ such that
	\begin{equation}\label{eq:tilted-threshold-expanded}
		\begin{split}
			&e^{-2M\tilde{\Lambda}(\beta^2 /2)}\mathbb{E}_{\U,\U'\sim\mathcal{U}}\left[\mathds{1}(\Omega'_M(\U))\mathds{1}(\Omega'_M(\U'))\exp\left(\frac{M\snr_c^u(\beta)}{2\sigma^4}\langle\U,\U'\rangle^2 +\frac{M\beta^2}{2}(\|\U\|^2+\|\U'\|^2)\right) \right] \\
			&=O(1),
	\end{split}
	\end{equation}
	where we let $\Omega'_M(\U):=\Omega_M(\U^{M\beta^2 /2} /\sigma)$.
	Since $\sigma$ is a increasing function of $\beta$, $\snr_c^u$ is not a monotonic function of $\beta$ due to the competition between two terms in exponential part.
	For this reason, it is hard to get the result for the whole sub-critical regime, i.e., $\ratio\beta^4 \sigma^4 <1$.

	Note that $\Lambda(\beta^2 /2)<\infty$ implies that $\sqrt{M}u_\hidx$ is sub-Gaussian with some variance proxy.
	Therefore, the tilted variable itself is also a sub-Gaussian.
\end{rmk}

\begin{rmk}\label{rmk:known-results}
	To compare with the known results where normalization is $(M+N)^{-1 /2}$, we fix the ratio $M / N=\ratio$.
	Then, our results can be interpreted with the change of effective $\beta$ as follows:
	\begin{align*} 
		Z'_{M,N}(\beta)&:=\mathbb{E}_{\mathcal{U}\times\mathcal{V}}\left[\exp\left(\frac{\beta}{\sqrt{M+N}}\sum_{\hidx,\widx}(\sqrt{N}\noise)\frac{(\sqrt{M}u_\hidx)(\sqrt{N}v_\widx)}{\|\U\|^{\epsilon_u}\|\V\|}\right) \right]=Z_{M,N}\left(\frac{\beta}{\sqrt{\ratio+1}}\right),\\
		F'_{M,N}(\beta)&:=\frac{1}{M+N}\log Z'_{M,N}(\beta)=\frac{1}{\ratio+1} F_{M,N}\left( \frac{\beta}{\sqrt{\ratio+1}} \right).
	\end{align*}
	Define
	\[ \snr_c'=\snr_c\left( \frac{\beta}{\sqrt{\ratio+1}} \right),\quad (\sigma')^2 = \Lambda'\left( \frac{\beta^2}{2(\ratio+1)} \right) .\]
	Applying Theorem \ref{thm:free-energy}, for any $\beta >0$ that satisfies $\sqrt{\ratio}\beta^2(\sigma')^2 <(\ratio+1)\snr_c'$, 
	\[ F'_{M,N}(\beta)\to F'(\beta):=\frac{\ratio}{\ratio+1}\Lambda\left( \frac{\beta^2}{2(\ratio+1)} \right)  .\]
	and
	\[ (M+N)\left( F_{M,N}'(\beta)-\frac{M}{M+N}\Lambda\left( \frac{\beta^2}{2(\ratio+1)} \right) \right)\implies \mathcal{N}(m_F', V_F'),\]
	as $M,N\to \infty$ where
	\[ m_F'=\frac{1}{4}\log\left( 1-\frac{\ratio\beta^4(\sigma')^4}{(\ratio+1)^2}\right)+\frac{\ratio\beta^4(\sigma')^4}{24(\ratio+1)^2}(x_4-3)(m_4''n_4-3) \]
	and
	\[ V_F'=-\frac{1}{2}\left( \log\left( 1-\frac{\ratio\beta^4(\sigma')^4}{(\ratio+1)^2} \right)-\frac{\ratio\beta^4(\sigma')^4}{2(\ratio+1)^2}(x_4-3) \right),\]
	with 
	\[ m_4''=\begin{cases}
		\Lambda''\left( \frac{\beta^2}{2(\ratio+1)} \right)(\sigma')^{-4}+1 & (\epsilon_u=0) \\
		M^2\mathbb{E}[u_1^4] & (\epsilon_u=1)
	\end{cases}. \]

	\begin{enumerate}
		\item 	For the bipartite SK model, where both $\mathcal{U}$ and $\mathcal{V}$ are Rademacher prior and $\epsilon_u = 1$, we have $\Lambda(t)=t, \Lambda'(t)=1,\sigma'=1, m''_4=n_4=1$.
			Since $\snr_c<1$ satisfies the Condition, for $\beta\in (0,1)$,
			\begin{align*}
				m'_F &= \frac{1}{4}\log\left( 1-\frac{\ratio\beta^4}{(\ratio+1)^2} \right) -\frac{\ratio\beta^4}{(\ratio+1)^2}\frac{x_4-3}{12}, \\
				V'_F &= -\frac{1}{2}\left( \log\left(1-\frac{\ratio\beta^4}{(\ratio+1)^2}\right)-\frac{\ratio\beta^4}{(\ratio+1)^2}\frac{x_4-3}{2} \right),
			\end{align*}
			which recovers the result in \cite{Dey-Wu2023} when $\noise$ is strictly sub-Gaussian.
		\item For the bipartite SSK model, where both spins are the spherical prior, which is equal to the normalized prior with Gaussian entries, we let $\epsilon_u=1$. 
			With $m''_4 =n_4= 3$ and $\snr_c < 1$ satisfying the Condition, for $\beta\in (0,1)$,
			\begin{align*}
				m_F &= \frac{1}{4}\log\left( 1-\frac{\ratio\beta^4}{(\ratio+1)^2} \right)+\frac{\ratio\beta^4}{(\ratio+1)^2}\frac{x_4-3}{4}, \\
				V_F &= -\frac{1}{2}\left( \log\left( 1-\frac{\ratio\beta^4}{(\ratio+1)^2} \right) -\frac{\ratio\beta^4}{(\ratio+1)^2}\frac{x_4-3}{2} \right) . 
			\end{align*}
			It recovers the result in \cite{Baik-Lee2020} when $\noise$ is strictly sub-Gaussian.
	\end{enumerate}
\end{rmk}

\subsection{Free energy of generalized Hopfield Model}\label{subsec:hopfield}
Recall the definition of the partition function and the free energy of a generalized Hopfield model.
\begin{defn}[Partition function and free energy of generalized Hopfield model]\label{defn:free-energy-hopfield}
	For an $M\times N$ rectangular matrix $\Noise=(\noise)$, a hidden prior $\mathcal{U}$ and a visible prior $\mathcal{V}$, we define the partition function $ \hat{Z}_{M,N}(\beta)$ and the free energy $\hat{F}_{M,N}(\beta)$ at inverse temperature $\beta>0$ by
	\[\hat{Z}_{M,N}\equiv \hat{Z}_{M,N}(\beta)=\mathbb{E}_{\mathcal{V}}\left[\exp\left(\sum_{\hidx}\psi_{u}\left( \beta\sqrt{N}\sum_{\widx}\noise\frac{v_\widx}{\|\V\|} \right)\right)\right], \quad \hat{F}_{M,N}\equiv \hat{F}_{M,N}(\beta)=\frac{1}{N}\log \hat{Z}_{M,N}, \]
	where $\psi_u(t) = \log\mathbb{E}[\exp(t\sqrt{M}u_\hidx)]$ is a cumulant generating function of $\sqrt{M}u_\hidx$.
	Here, when $\|\V\|=0$, we define $v_\widx / \|\V\|=0$ for all $\widx$.
\end{defn}

Directly from the Theorem \ref{thm:free-energy}, we get:
\begin{cor}\label{cor:free-energy-hopfield}
	Recall the definition of the free energy $\hat{F}_{M,N}(\beta)$ in Definition \ref{defn:free-energy-hopfield}.
	Suppose that $\Noise$ is a Rectangular matrix, $\sqrt{N}\noise$ is symmetric and striclty sub-Gaussian.
	Then, the statement in Theorem \ref{thm:free-energy} with $\epsilon_u=0$ holds with $\hat{F}_{M,N}(\beta)$.
\end{cor}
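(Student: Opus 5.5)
The plan is to identify $\hat{Z}_{M,N}(\beta)$ with the partition function $Z_{M,N}(\beta)$ of Definition \ref{defn:free-energy} in the case $\epsilon_u=0$, and then apply Theorem \ref{thm:free-energy} verbatim. This reduction is exact and holds for every realization of $\Noise$.

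Fix $\V$ with $\|\V\|\neq 0$ and condition on it and on $\Noise$. Write
\[ s_\hidx := \beta\sqrt{N}\sum_{\widx}\noise\frac{v_\widx}{\|\V\|}. \]
In Definition \ref{defn:free-energy} with $\epsilon_u=0$ the exponent is $\beta\sqrt{MN}\sum_{\hidx,\widx}\noise u_\hidx v_\widx/\|\V\|$. This equals $\sum_\hidx (\sqrt{M}u_\hidx)\, s_\hidx$, which is linear in each $u_\hidx$.

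Under $\mathcal{U}$ the coordinates $u_\hidx$ are i.i.d. and independent of $\V$ by Definition \ref{defn:prior}. The conditional expectation over $\U$ therefore factorizes:
\[ \mathbb{E}_{\mathcal{U}}\Bigl[\exp\Bigl(\sum_\hidx \sqrt{M}u_\hidx s_\hidx\Bigr)\Bigr]=\prod_{\hidx}\mathbb{E}\bigl[e^{s_\hidx\sqrt{M}u_\hidx}\bigr]=\exp\Bigl(\sum_\hidx \psi_u(s_\hidx)\Bigr). \]
The only point to check is finiteness of $\psi_u(s_\hidx)$. The standing assumption $\Lambda(\beta^2/2)<\infty$ gives $\mathbb{E}[e^{c(\sqrt{M}u_\hidx)^2}]<\infty$ for some $c>0$, and hence the moment generating function is finite on all of $\mathbb{R}$. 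Tonelli's theorem (the integrand is positive) justifies exchanging the order of integration.

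On the event $\|\V\|=0$, both conventions set the relevant quantity to $0$, so both integrands equal $1$ and the two sides agree there too. Taking $\mathbb{E}_{\mathcal{V}}$ then gives $\hat{Z}_{M,N}(\beta)=Z_{M,N}(\beta)$, and hence $\hat{F}_{M,N}(\beta)=F_{M,N}(\beta)$ identically.

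Since the two free energies coincide pathwise, the hypotheses of Theorem \ref{thm:free-energy} with $\epsilon_u=0$ transfer directly. These are the rectangular, symmetric, strictly sub-Gaussian assumptions on $\Noise$, Condition \ref{cond:prior-separable} for the tilted $\U$ and for $\V$, and $\sqrt{\ratio}\beta^2\sigma^2<\snr_c(\beta)$. The convergence in probability and the Gaussian fluctuation, with the same $m_F$, $V_F$ and $m_4'=\tilde m_4$, then follow. There is no genuine obstacle; the only care needed is the finiteness of $\psi_u$ and the degenerate event $\|\V\|=0$, both handled above.
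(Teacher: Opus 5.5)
Your proposal is correct and follows the paper's route. The paper obtains the corollary "directly from Theorem \ref{thm:free-energy}" via the identity shown in the introduction: integrating out the i.i.d.\ hidden spins gives $\hat{Z}_{M,N}(\beta)=Z_{M,N}(\beta)$ for $\epsilon_u=0$. Your extra checks, namely finiteness of $\psi_u$ from $\Lambda(\beta^2/2)<\infty$, the use of Tonelli, and the $\|\V\|=0$ convention, are details the paper leaves implicit.
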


\begin{rmk}[Gaussian-Ising hidden prior]\label{rmk:hopfield-bgst}
	Consider the Gaussian-Ising interpolation studied in \cite{Barra-Genovese-Sollich-Tantari2017,Barra-Genovese-Sollich-Tantari2018}.
	Let $\sqrt{N}v_\widx$ be i.i.d. Rademacher variables and let $\sqrt{M}u_\hidx$ be independent copies of
	\[ H_s =\sqrt{s}z + \sqrt{1-s}\tau, \quad 0\le s\le 1, \]
	where $z\sim \mathcal{N}(0, 1)$ and $\tau$ is an independent Rademacher variable.
	We maintain the assumptions on $\Noise$ in Corollay \ref{cor:free-energy-hopfield}.
	Then $\|\V\|=1, n_4=1$, and
	\[ \psi_u(t) = \frac{s}{2}t^2+\log\cosh(\sqrt{1-s}t) .\]
	Conditioning on $\tau$ and using cumulant generating function of noncentral chi-squared distribution, we get
	\begin{equation}\label{eq:bgst-Lambda}
		\tilde{\Lambda}_s(t)=\log\mathbb{E}[e^{tH_s^2}] =-\frac{1}{2}\log(1-2st)+\frac{(1-s)t}{1-2st},
	\end{equation}
	for $st<1 /2$.
	Applying $t = \beta^2 /2$, we obtain
	\[  \tilde{\sigma}_s^2=\frac{1-s^2\beta^2}{(1-s\beta^2)^2}, \quad	\tilde{m}_{4,s}=3-\frac{2(1-s)^2}{(1-s^2\beta^2)^2}. \]
	Since $H_s$ is a Lee-Yang type random variable, the conditions $s\beta^2 <1$ and $\ratio \beta^4 \tilde{\sigma}_s^4 <1$ are equivalent to $0<\beta<\beta_c(s)$, where
	\begin{equation}\label{eq:bgst-beta-critical}
		\beta_c(s)=\left(s+\frac{\sqrt{\ratio}}{2}+\frac{1}{2}\sqrt{\ratio+4\sqrt{\ratio}\,s(1-s)}\right)^{-1/2}.
	\end{equation}
	Thus Theorem \ref{thm:Hopfield} imply that, throughout this regime,
	\begin{equation}\label{eq:bgst-free-energy-limit}
		\hat{F}_{M,N}(\beta)\to -\frac{\ratio}{2}\log(1-s\beta^2)+\frac{\ratio(1-s)\beta^2}{2(1-s\beta^2)}
	\end{equation}
	and
	\begin{equation}\label{eq:bgst-free-energy-fluctuation}
		\log\hat{Z}_{M,N}(\beta)-\left(-\frac{M}{2}\log(1-s\beta^2)+\frac{M(1-s)\beta^2}{2(1-s\beta^2)}\right) \implies\mathcal{N}(\tilde{m}_s,\tilde{V}_s),
	\end{equation}
	where
	\begin{align*}
		m_s &=\frac{1}{4}\log(1-\ratio \beta^4 \tilde{\sigma}_s^4)-\frac{\ratio\beta^4(x_4-3)(1-s)^2}{12(1-s\beta^2)^2},\\
		V_s &=-\frac{1}{2}\log(1-\ratio\beta^4 \tilde{\sigma}_s^4)+\frac{\ratio\beta^4\tilde{\sigma}_s^4}{4}(x_4-3).
	\end{align*}
	In particular, if the disorder also follows the Gaussian-Ising interpolation with parameter $t$, then $x_4=3-2(1-t)^2$.
	The endpoints $s=0$ and $s=1$ recover the bipartite SK model and the Hopfield model with Gaussian hidden spins, respectively.

	To compare with \cite{Barra-Genovese-Sollich-Tantari2017}, denote their inverse temperature by $b$ and their load by $\alpha$.
	Their convention gives $b=\beta^2$ and $\alpha=\ratio$, so \eqref{eq:bgst-beta-critical} agrees with their paramagnetic-spin glass boundary in (13).
	For \cite{Barra-Genovese-Sollich-Tantari2018}, the corresponding conversion is $b=(1+\ratio)\beta^2$ and $\alpha=\ratio/(1+\ratio)$.
	Their paramagnetic self-overlaps are then $Q=1$ and $R=\tilde{\sigma}_s^2$, and their criterion $b^2\alpha(1-\alpha)Q^2R^2<1$ reduces to $\ratio \beta^4 \tilde{\sigma}_s^4<1$.
	The limit in \eqref{eq:bgst-free-energy-limit} also agrees with the annealed free energy of their paramagnetic solution.
	Hence our result rigorously establishes this free energy throughout the predicted high-temperature region even when the disorder is not a Gaussian-Ising interpolation, and gives its Gaussian fluctuations, which are not part of the replica-symmetric phase-diagram analysis in those works.
\end{rmk}

Applying Corollary \ref{cor:free-energy-hopfield} with Gaussian hidden prior, we get the following result:
\begin{cor}\label{cor:free-energy-gaussian-hopfield}
	Recall the definition of the free energy $F_{M,N}^H(\beta)$ in Corollary \ref{cor:Hopfield-Gaussian}.
	Suppose that $\Noise$ is a Rectangular matrix, $\sqrt{N}\noise$ is symmetric and strictly sub-Gaussian.
	Suppose that a constant $\snr_c^v>0$ satisfies Condition \ref{cond:prior-separable} with $\V$.
	Let $\snr_c=\sqrt{\snr_c^v}$.
	Then, the statement in Corollary \ref{cor:Hopfield-Gaussian} holds without any change for any $\beta\in (0, (1+\sqrt{\ratio}/\snr_c)^{-1/2})$.
\end{cor}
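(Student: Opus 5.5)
The plan is to obtain this as a direct specialization of Corollary \ref{cor:free-energy-hopfield}, that is, Theorem \ref{thm:free-energy} with $\epsilon_u=0$, with $\sqrt{M}u_\hidx\sim\mathcal{N}(0,1)$. First I identify the partition functions. For a standard Gaussian hidden spin, $\psi_u(t)=t^2/2$. Under the normalization \eqref{eq:renormalization} we have $v_\widx/\|\V\|=\tau_\widx/\|\bt\|$ and $\sqrt N\noise=J_{\hidx\widx}$. Hence $\sum_\hidx\psi_u(\cdot)$ in Definition \ref{defn:free-energy-hopfield} is the quadratic form $\frac{\beta^2}{2}\|J\bt\|^2/\|\bt\|^2$. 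This is the exponent in \eqref{eq:def-hopfield}, with the factor $N^{-1}$ absorbed by reading $\|\bt\|$ in the normalized sense of \eqref{eq:renormalization}. So $\hat Z_{M,N}=Z^H_{M,N}$ and $\hat F_{M,N}=F^H_{M,N}$.

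Next I compute the quantities entering Theorem \ref{thm:free-energy}. By \eqref{eq:Lambda-iid}, $\Lambda(t)=\log\mathbb{E}[e^{tg^2}]=-\frac12\log(1-2t)$ for $t<1/2$. This is finite and smooth near $t=\beta^2/2$ as long as $\beta<1$, which the claimed range guarantees. Evaluating at $t=\beta^2/2$ gives
\[ \Lambda(\beta^2/2)=-\tfrac12\log(1-\beta^2),\qquad \sigma^2=\Lambda'(\beta^2/2)=\frac{1}{1-\beta^2},\qquad \Lambda''(\beta^2/2)=\frac{2}{(1-\beta^2)^2}, \]
so $m_4'=\Lambda''\sigma^{-4}+1=3$. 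Substituting into $m_F$ and $V_F$ gives $\ratio\beta^4\sigma^4=\ratio\beta^4/(1-\beta^2)^2$ and $\frac{1}{24}(x_4-3)(3n_4-3)=\frac18(x_4-3)(n_4-1)$. These reproduce $m_F^H$ and $V_F^H$ exactly, and the limit $\ratio\Lambda(\beta^2/2)$ is the stated one.

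It remains to check the prior condition and translate the range of $\beta$. The tilted law $e^{tx^2}\mu(\mathrm{d}x)/Z(t)$ of a standard Gaussian at $t=\beta^2/2$ is $\mathcal{N}(0,1/(1-\beta^2))=\mathcal{N}(0,\sigma^2)$. Hence $\U^{M\beta^2/2}/\sigma$ again has i.i.d. standard Gaussian coordinates, with variance proxy $1$. By the remark after Condition \ref{cond:prior-separable}, every $\snr_c^u<1$ is admissible, independently of $\beta$. The hypothesis $\sqrt{\ratio}\beta^2\sigma^2<\sqrt{\snr_c^u\snr_c^v}$ thus reads $\sqrt{\ratio}\beta^2/(1-\beta^2)<\sqrt{\snr_c^u}\,\snr_c$. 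Because the inequality is strict and $\snr_c^u$ may be taken arbitrarily close to $1$, it suffices that $\sqrt{\ratio}\beta^2<\snr_c(1-\beta^2)$, i.e. $\beta^2(1+\sqrt{\ratio}/\snr_c)<1$. This is precisely the stated range, and it also forces $\beta<1$.

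The only point requiring care is this limiting choice of $\snr_c^u$. For fixed $\beta$ in the range, I pick $\snr_c^u<1$ close enough to $1$ that the strict inequality survives, and then Corollary \ref{cor:free-energy-hopfield} applies verbatim. Everything else is routine calculus.
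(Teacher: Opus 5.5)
Your proposal is correct and follows the paper's route: the paper obtains this corollary by specializing Corollary \ref{cor:free-energy-hopfield} (Theorem \ref{thm:free-energy} with $\epsilon_u=0$) to a standard Gaussian hidden prior, whose tilted version rescaled by $\sigma$ is again standard Gaussian, so $\snr_c^u$ can be taken arbitrarily close to $1$ and the condition $\sqrt{\ratio}\beta^2\sigma^2<\snr_c(\beta)$ becomes $\beta^2(1+\sqrt{\ratio}/\snr_c)<1$. Your values $\sigma^2=(1-\beta^2)^{-1}$ and $m_4'=3$, and the resulting $m_F^H$ and $V_F^H$, are all correct, and reading $\|\bt\|$ as the normalized norm correctly resolves the factor-$N$ inconsistency in \eqref{eq:def-hopfield}.
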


\begin{rmk}
	For the classical Hopfield model, where both the disorder $\mathcal{\Noise}$ and the prior $\mathcal{V}$ are Rademacher distribution, we have $n_4=1, x_4=1$.
	Since $\snr_c< 1$ satisfies Condition \ref{cond:prior}, for $\beta\in {(0,(1+\sqrt{\ratio})^{-1 /2})}$, 
	\[ m_F^{H} = \frac{1}{4}\log\left(1-\frac{\ratio\beta^4}{(1-\beta^2)^2}\right), \quad V_F^{H}=-\frac{1}{2}\log\left( 1-\frac{\ratio\beta^4}{(1-\beta^2)^2}\right)-\frac{\ratio \beta^4}{2(1-\beta^2)^2}. \]
	This recovers the results in \cite{Comets-Kurkova-Trashorras2004}.
	In \cite{Comets-Kurkova-Trashorras2004}, they proved that
	\[\log Z^{H}_{M,N} - \log\mathbb{E}[Z^{H}_{M,N}] \implies \mathcal{N}(-\rho', 2\rho'), \quad \rho'= -\frac{1}{4}\log\left(1-\frac{\ratio\beta^4}{(1-\beta^2)^2}\right)-\frac{\ratio \beta^4}{4(1-\beta^2)^2} \]
	as $M, N\to \infty$.
	The difference between limiting mean, $\ratio\beta^4 / (4(1-\beta^2)^2)$ is due to the difference in centering, since
	\[ \log\mathbb{E}[Z^{H}_{M,N}]= -\frac{M}{2}\log(1-\beta^2)-\frac{M}{N}\frac{\beta^4}{4(1-\beta^2)^2}+o(1) .\]
	(See (12) in \cite{Comets-Kurkova-Trashorras2004}.)

	For the Gaussian Hopfield model, where the disorder $\mathcal{\Noise}$ is Gaussian distribution and the prior $\mathcal{V}$ is Rademacher distribution, we have $n_4=1, x_4=3$.
	Since $\snr_c<1$ satisfies Condition \ref{cond:prior}, for $\beta \in {(0, (1+\sqrt{\ratio})^{-1 /2})}$,
	\[ m_F^{H}=\frac{1}{4}\log\left( 1-\frac{\ratio\beta^4}{(1-\beta^2)^2} \right), \quad V_F^{H}=-\frac{1}{2}\log\left( 1-\frac{\ratio\beta^4}{(1-\beta^2)^2} \right). \]
	which recovers the results in \cite{Barra-Genovese-Guerra-Tantari2012}.

	Also, SSK model with sample covariance matrix in Section 3.3 of \cite{Baik-Lee2016} is equivalent to Hopfield model with normalized gaussian spin without diagonal term.
	Then $n_4=3$ and hence
	\begin{align*} 
		m_F^H &= \frac{1}{4}\log\left(1-\frac{\ratio\beta^4}{(1-\beta^2)^2}\right)+\frac{\ratio \beta^4}{4(1-\beta^2)^2}(x_4-3), \\
		V_F^H &= -\frac{1}{2}\log\left( 1-\frac{\ratio\beta^4}{(1-\beta^4)^2}\right)+\frac{\ratio\beta^4}{4(1-\beta^2)^2}(x_4-3), 
	\end{align*}
	which recovers the results in \cite{Baik-Lee2016}.
\end{rmk}

\subsection{Log likelihood ratio in spiked rectangular model}

The spiked rectangular matrices are defined as follows.
\begin{defn}(Spiked rectangular matrix)\label{defn:additive}
	We say a $M \times N$ random matrix $\Data=\sqrt{\SNR}\U\V^T+\Noise$ is a spiked rectangular matrix with mean $\U, \V$ and a signal-to-noise ratio(SNR) $\SNR$ if $\U=(u_1, \ldots , u_M)^T\in \mathbb{R}^M, \V=(v_1,\ldots,v_N)^T\in \mathbb{R}^N$ with $\|\U\|=\|\V\|=1$, and $\Noise$ is a rectangular matrix.
\end{defn}

Denote by $\mathbf{H}_0$ the null hypothesis and $\mathbf{H}_1$ the alternative hypothesis, i.e.
\[ \mathbf{H}_0:\SNR=0, \qquad \mathbf{H}_1:\SNR=\snr>0.\]
Denote by $\mathbb{P}_0$ the joint probability of the data matrix under $\mathbf{H}_0$ and $\mathbb{P}_1$ the joint probability of the data matrix under $\mathbf{H}_1$.
Then the likelihood ratio is
\[ \mathcal{L}(\Data;\U,\V,\snr)= \prod_{\hidx,\widx}\frac{p(\sqrt{N}\data-\sqrt{\snr N}u_\hidx v_\widx)}{p(\sqrt{N}\data)}.\]
With the Bayesian viewpoint, we consider the likelihood ratio
\[ \mathcal{L}(\Data;\snr) = \mathbb{E}_{\mathcal{U}\times\mathcal{V}}[\mathcal{L}(\Data;\U,\V,\snr)]=\mathbb{E}_{\mathcal{U}\times \mathcal{V}}\left[\prod_{\hidx,\widx}\frac{p(\sqrt{N}\data-\sqrt{\snr N}u_\hidx v_\widx)}{p(\sqrt{N}\data)}\right].\]

We assume $\mathbf{H}_0$, i.e., $\Data=\Noise$, since the likelihood ratio under $\mathbf{H}_0$ naturally corresponds to the free energy of the generalized bipartite SK model in our analysis.
For the likelihood ratio, there are no terms corresponding to the limiting free energy but the leading order term is asymptotically normally distributed.
We thus consider the likelihood ratio for both normalized and i.i.d. priors as follows:
\begin{defn}(Likelihood ratio)\label{defn:lr-additive}
	For a $M\times N$ rectangular matrix $\Noise=(\noise)$ and a prior $\U\in \mathcal{U}$ and $\V\in \mathcal{V}$, we define the likelihood ratio given $\mathbf{H}_0$, $\mathcal{L}(\snr)$, by
	\[ \mathcal{L}(\snr, \epsilon_u, \epsilon_v) = \mathbb{E}_{\mathcal{U}\times \mathcal{V}}\left[\prod_{\hidx,\widx}\frac{p(\sqrt{N}\noise-\sqrt{\snr N}u_\hidx v_\widx / (\|\U\|^{\epsilon_u}\|\V\|^{\epsilon_v}))}{p(\sqrt{N}\noise)}\right],\]
	where $\epsilon_u, \epsilon_v \in \{0,1\}$. 
	Here, when $\|\U\|=0$ or $\|\V\|=0$, we define $u_\hidx v_\widx / (\|\U\|^{\epsilon_u}\|\V\|^{\epsilon_v})=0$ for all $\hidx,\widx$. 
\end{defn}

Our result on the asymptotic normality of the log likelihood ratio, which generalizes Theorem \ref{thm:log-LR}, is as follows:

\begin{thm}\label{thm:lr-additive}
	Recall the definition of the likelihood ratio $\mathcal{L}(\snr,\epsilon_u, \epsilon_v)$ in Definition \ref{defn:lr-additive}.
	Suppose that $\Noise$ satisfies Assumption \ref{assum:density}.
	Suppose that $\snr_c$ satisfies Condition \ref{cond:prior}.
	Then, the statement in Theorem \ref{thm:log-LR} holds without any change for any $\snr\in(0,\snr_c\sqrt{\ratio}/F_p)$.
\end{thm}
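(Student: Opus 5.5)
Under $\mathbf{H}_0$ the likelihood ratio has the form of a partition function of a generalized bipartite SK model with a non-linear Hamiltonian. The plan is to expand it in the small parameter $\sqrt{\snr N}u_\hidx v_\widx$, identify a multigraph (cycle) structure, and run the same bipartite multigraph comparison that underlies Theorem \ref{thm:free-energy}.

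\textbf{Step 1: Taylor expansion.} Write $s_{\hidx\widx}=\sqrt{\snr N}\,u_\hidx v_\widx/(\|\U\|^{\epsilon_u}\|\V\|^{\epsilon_v})$. On the coordinatewise symmetric good event $\Omega_{M,N}$ of Condition \ref{cond:prior}, each $|s_{\hidx\widx}|$ is of order $N^{-1/2+o(1)}$. Taylor expanding each factor gives
\[ \frac{p(x-s)}{p(x)}=\sum_{k\ge0}\frac{(-s)^k}{k!}\frac{p^{(k)}(x)}{p(x)}, \]
with the remainder controlled by part 3 of Assumption \ref{assum:density}. Taking logarithms, the product over $(\hidx,\widx)$ becomes $\exp\bigl(\sum_{\hidx,\widx}\sum_k c_k(\sqrt{N}\noise)s_{\hidx\widx}^k\bigr)$.

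\textbf{Step 2: Centering.} The coefficient functions $h_1=-p'/p$ and $h_2=\tfrac12(p''/p-(p'/p)^2)$ have $\mathbb{E}h_1=0$, $\mathbb{E}h_1^2=F_p$, and $\mathbb{E}h_2=-F_p/2$. We split $h_2$ into its mean plus a centered part. The deterministic pieces $-\tfrac{F_p}{2}\snr\sum u_\hidx^2v_\widx^2/(\cdots)$ combine with the normalization. For i.i.d. priors, the fluctuation of $\|\U\|^2\|\V\|^2$ around $1$ produces a factor that cancels against the quadratic term in the cycle expansion. This is why no quadratic tilt is needed here and why the answer does not depend on $\epsilon_u,\epsilon_v$. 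The centered $h_2$ contributes a Gaussian term $\frac{\snr}{\sqrt{MN}}\sum_{\hidx,\widx}\tilde h_2(\sqrt N\noise)$, whose variance depends on $G_p$ and $F_p$. Terms of order $k\ge3$ are negligible after summation.

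\textbf{Step 3: Multigraph expansion.} The leading random term is $\sqrt{\snr}\sum_{\hidx,\widx} h_1(\sqrt N\noise)\sqrt N u_\hidx v_\widx$, which is an effective bipartite SK Hamiltonian with disorder $h_1(\sqrt N \noise)/\sqrt{N F_p}$ and effective inverse temperature $\sqrt{\snr F_p}$. Following Section \ref{sec:multigraph}, we expand $\mathcal{L}$ over bipartite multigraphs. Since $\mathbb{E}_{\U,\V}$ kills odd vertex degrees by symmetry, $\log\mathcal{L}$ equals a sum over even cycles $C_{2k}$ plus a negligible remainder. The cycle statistic $\sum_{\text{cycles}}\prod h_1$ is asymptotically Gaussian with mean $0$ and variance $(\snr F_p)^{2k}\ratio^{-k}/(2k)$ at leading order. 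The $k=1$ (2-cycle) term merges with the centered $h_2$ term of Step 2. Summing over $k$ yields $\log\mathcal{L}\Rightarrow\mathcal{N}(-\rho,2\rho)$, where $\rho$ collects $-\tfrac14\log(1-\snr^2F_p^2/\ratio)$ together with the $G_p$ correction arising from the 2-cycle. The mean $-\rho$ then follows from $\mathbb{E}_0\mathcal{L}=1$ via uniform integrability (Le Cam), which also serves as a consistency check.

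\textbf{Step 4: Second-moment control (main obstacle).} We need to show that the non-cycle multigraphs and the Taylor remainder are $o(1)$ in probability. The approach is to compare the truncated ratio restricted to $\Omega_{M,N}$ with its cycle approximation in $L^2$. Its second moment involves $\mathbb{E}_{\U,\U',\V,\V'}\exp(\snr F_p\sqrt{MN}\langle\U,\U'\rangle\langle\V,\V'\rangle(1+o(1)))$, which is $O(1)$ exactly when $\snr F_p/\sqrt{\ratio}<\snr_c$ by Condition \ref{cond:prior}. This is the hardest part: we need the smooth density bounds to replace the Gaussian identity $\mathbb{E}\,p(x-s)p(x-s')/p(x)^2=e^{ss'}$ by $e^{F_pss'+O(N^{-3/2})}$ uniformly over the truncated region. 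The contribution from $\Omega_{M,N}^c$ is discarded using $\mathbb{P}(\Omega_{M,N}^c)=o(1)$ and Markov's inequality applied to $\mathbb{E}_0\mathcal{L}=1$.
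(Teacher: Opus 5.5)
Your overall plan (truncate, discard rare prior events using $\mathbb{E}_{\mathcal X}\mathcal{L}=1$, Taylor expand, run the bipartite multigraph machinery under Condition \ref{cond:prior}) matches the paper's, but as written it does not close.

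\textbf{Step 2 uses the wrong normal form.} Taking logarithms creates uncentered coefficients, and two of your claims then fail.
For i.i.d.\ priors, $\mathbb{E}[h_2]\sum_{\hidx,\widx}s_{\hidx\widx}^2=-\tfrac{\snr F_p N}{2}\|\U\|^{2-2\epsilon_u}\|\V\|^{2-2\epsilon_v}$ is of order $N$ and spin-dependent, fluctuating on scale $\sqrt N$. It cannot be absorbed into a normalization.
Orders $k\ge 3$ are also not negligible. For non-Gaussian $p$ (e.g.\ $p\propto e^{-cx^4}$), $c_4=\tfrac{1}{24}(\log p)''''$ has nonzero mean, and $\mathbb{E}[c_4]\sum_{\hidx,\widx}s_{\hidx\widx}^4\to\mathbb{E}[c_4]\snr^2 m_4 n_4/\ratio$.
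These terms cancel only after recombining edge by edge ($\tfrac12 h_1^2+h_2=\tfrac{p''}{2p}$, and similarly at orders $3$ and $4$). That amounts to returning to the multiplicative expansion the paper uses. Each factor becomes $1+\sum_{n=1}^{4}P^{(n)}_{\hidx\widx}\bigl(\sqrt{N}u_\hidx v_\widx/(\|\U\|^{\epsilon_u}\|\V\|^{\epsilon_v})\bigr)^n$ with $P^{(n)}_{\hidx\widx}=\frac{(-1)^n\snr^{n/2}p^{(n)}(\sqrt N\noise)}{n!\,p(\sqrt N\noise)}$.
Every $P^{(n)}$ is centered because $\int p^{(n)}=0$. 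The parity of $p$ gives the orthogonality relations, and each factor has $\mathcal X$-mean exactly one. That, not a cancellation of norm fluctuations, is why no tilt is needed and why $\epsilon_u,\epsilon_v$ are irrelevant.
Relatedly, a bipartite graph has no $2$-cycles. Their role is played by double edges with weight $\frac{\snr p''}{2p}$, which contribute $\snr^2 G_p/(4\ratio)$ to the variance. Your centered-$h_2$ term should carry the factor $\snr/M$, not $\snr/\sqrt{MN}$. You also never actually derive the $G_p$ term of $\rho$.

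\textbf{Steps 3--4 assert the decisive reduction rather than prove it, and put the difficulty in the wrong place.} Condition \ref{cond:prior} directly gives only $\mathbb{E}_{\mathcal X}[\mathcal{L}_\Omega^2]=O(1)$ for the truncated ratio $\mathcal{L}_\Omega$. This yields tightness and uniform integrability, so your Le Cam identification of the mean is fine. It does not give $L^2$-closeness of $\mathcal{L}_\Omega$ to a cycle product.
For that you must show that the prior weights $V(\gamma)$ can be replaced by $1$ uniformly over arbitrarily large multigraphs. This includes multigraphs with vertices of degree $\ge 4$, where $V(\gamma)$ can exceed $1$.
In the paper this is exactly the content of Proposition \ref{prop:general}:
\begin{itemize}
\item Lemma \ref{lem:prior-strong} uses the slack $\snr<\snr_c\sqrt{\ratio}$ to get the exact limit $(1-\snr^2/\ratio)^{-1/2}$, via an overlap CLT plus an $L^{1+\delta}$ bound.
\item Hermite orthogonality for Gaussian noise converts this into $\sum_\gamma (V(\gamma)^2-1)(\cdot)=o(1)$ (Lemma \ref{lem:1-cycle-large}).
\item Cutoff and similarity-class estimates finish the argument.
\end{itemize}
The noise density enters only through $\tau_n=\mathbb{E}[(P^{(n)})^2]$. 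So the general-$p$ second-moment computation you call the main obstacle is unnecessary.
Your error term there is also off. Relative to $e^{F_p s s'}$, the per-entry correction is $(\tfrac{G_p}{4}-\tfrac{F_p^2}{2})s^2 s'^2+\cdots$, which sums to a generally non-vanishing $O(1)$ quantity. A bare $O(N^{-3/2})$ per entry would instead sum to $O(N^{1/2})$.

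Once the product form is in place, the proof reduces to verifying the hypotheses of Proposition \ref{prop:general} with $\tau_1=\snr F_p$ and $\tau_2=\snr^2 G_p/4$. Then $\tau_1^2-2\tau_2=\tfrac{\snr^2}{2}(2F_p^2-G_p)$ gives the stated $\rho$, and $\tau_1<\snr_c\sqrt{\ratio}$ is precisely $\snr<\snr_c\sqrt{\ratio}/F_p$.
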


We prove Theorem \ref{thm:lr-additive} in Section \ref{subsec:proof-lr-additive}.

\begin{rmk}
	Theorem \ref{thm:lr-additive} implies that the log likelihood ratio converges in distribution to $\mathcal{N}(\rho_L, 2\rho_L)$ under $\mathbf{H}_1$, which is a consequence of Le Cam's third lemma \cite{Cam-1960, Van-2000}.
	It also implies that the sum of the Type-I error and the Type-II error of the likelihood ratio test converges to
	\[ \mathrm{erfc}(\sqrt{\rho_L} /2) = \frac{2}{\sqrt{\pi}}\int_{\sqrt{\rho_L} /2}^\infty e^{-x^2} \mathrm{d}x.\]
	(See, e.g., Corollary 2.5 in \cite{Chung-Lee-Lee2025}.)	
\end{rmk}

\subsection{Universal result}\label{subsec:prop}

The proof of Theorem \ref{thm:free-energy} and \ref{thm:lr-additive} is based on the following bipartite counterpart of Proposition 2.12 of \cite{Choo-Han-Lee2026}.
It concerns the same fourth-order polynomial expansion, now indexed by rectangular entries and averaged over two independent priors.
We state the assumptions and limiting parameters before explaining the changes to the multigraph argument.
\begin{prop}\label{prop:general}
	Suppose that the random variables $\{P_{\hidx\widx}^{(n)}\}_{\hidx,\widx}\;(n=1,2,3,4)$ satisfies the following:
	\begin{itemize}
		\item The random vectors $(P_{\hidx\widx}^{(1)}, P_{\hidx\widx}^{(2)}, P_{\hidx\widx}^{(3)}, P_{\hidx\widx}^{(4)})$ are independent and identically distributed.
		\item For any $n$ and any integer $s>2$, there exists a constant $C_s^P$ such that $\mathbb{E}[|P_{\hidx\widx}^{(n)}|^s]\le C_s^P$.
		\item For any $n$, $\mathbb{E}[P_{\hidx\widx}^{(n)}]=0$, $\mathbb{E}[(P_{\hidx\widx}^{(n)})^2]=\tau'_n$ and $\tau'_n\to \tau_n$ as $M,N\to \infty$, for some ($M,N$-indepedent) constants $\tau_n$.
		\item $\mathbb{E}[P_{\hidx\widx}^{(1)}P_{\hidx\widx}^{(2)}]=\mathbb{E}[P_{\hidx\widx}^{(2)}P_{\hidx\widx}^{(3)}]=\mathbb{E}[P_{\hidx\widx}^{(3)}P_{\hidx\widx}^{(4)}]=\mathbb{E}[P_{\hidx\widx}^{(1)}P_{\hidx\widx}^{(4)}]=0$ and $\mathbb{E}[P_{\hidx\widx}^{(1)}(P_{\hidx\widx}^{(2)})^2]=0$.
	\end{itemize}
	Fix $\epsilon\in (0, 1 / 60)$.
	Let
	\begin{equation}\label{eq:cutting-event}
		\begin{split}
			\Omega^\epsilon_{\mathcal{U}\times \mathcal{V}} := 
			&\{\max_{1\le\hidx\le M}|u_\hidx|\le M^{-1 /2+\epsilon}\}\cap \{|\|\U\|-1|\le M^{-1 /5}\} \\
			&\cap\{\max_{1\le \widx\le N}|v_\widx|\le N^{-1 /2+\epsilon}\}\cap \{|\|\V\|-1|\le N^{-1 /5}\}.
		\end{split}
	\end{equation}
	and 
	\begin{equation}\label{eq:cutting-event-noise}
		\Omega_{\mathcal{\Noise}}^\epsilon := \{\max_{\hidx,\widx}|\noise| \le N^{-1/ 2 +\epsilon}\}.
	\end{equation}
	Suppose that $\snr_c$ satisfies Condition \ref{cond:prior}.
	Suppose that on $\Omega^\epsilon_{\mathcal{U}\times \mathcal{V}}\cap\Omega_{\mathcal{\Noise}}^\epsilon$,
	\[\prod_{\hidx,\widx}\left( 1+P_{\hidx\widx}^{(1)}\frac{\sqrt{N}u_\hidx v_\widx}{\|\U\|^{\epsilon_u}\|\V\|^{\epsilon_v}}+P_{\hidx\widx}^{(2)}\frac{Nu_\hidx^2 v_\widx^2}{\|\U\|^{2\epsilon_u}\|\V\|^{2\epsilon_v}}+P_{\hidx\widx}^{(3)}\frac{N^{3 /2 }u_\hidx^3 v_\widx^3}{\|\U\|^{3\epsilon_u}\|\V\|^{3\epsilon_v}}+P_{\hidx\widx}^{(4)}\frac{N^2u_\hidx^4 v_\widx^4}{\|\U\|^{4\epsilon_u}\|\V\|^{4\epsilon_v}}\right) \]
	is nonnegative.
	If $\tau_1 <\snr_c\sqrt{\ratio}$,
	\begin{equation}\label{eq:expansion}
		\begin{split}
			\log\mathbb{E}_{\mathcal{U}\times \mathcal{V}}\left[\mathds{1}(\Omega^\epsilon_{\mathcal{U}\times \mathcal{V}})\prod_{\hidx,\widx}\right.
			&\left( 1+P_{\hidx\widx}^{(1)}\frac{\sqrt{N}u_\hidx v_\widx}{\|\U\|^{\epsilon_u}\|\V\|^{\epsilon_v}}+P_{\hidx\widx}^{(2)}\frac{Nu_\hidx^2 v_\widx^2}{\|\U\|^{2\epsilon_u}\|\V\|^{2\epsilon_v}} \right.\\
			&\quad\left.\left.+P_{\hidx\widx}^{(3)}\frac{N^{3 /2 }u_\hidx^3 v_\widx^3}{\|\U\|^{3\epsilon_u}\|\V\|^{3\epsilon_v}}+P_{\hidx\widx}^{(4)}\frac{N^2u_\hidx^4 v_\widx^4}{\|\U\|^{4\epsilon_u}\|\V\|^{4\epsilon_v}}\right) \right] 
			\quad\implies \mathcal{N}(-\rho, 2\rho)
		\end{split}
	\end{equation}
	as $M, N\to \infty$ for any $\epsilon_u, \epsilon_v\in\{0,1\}$, where
	\[ \rho:= -\frac{1}{4}\left( \log\left(1-\frac{\tau_1^2}{\ratio}\right)+\frac{1}{\ratio}(\tau_1^2-2\tau_2) \right).\]
\end{prop}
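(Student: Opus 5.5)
We follow the multigraph expansion strategy of \cite{Choo-Han-Lee2026}, adapted to the bipartite setting. Abbreviate $w_{\hidx\widx}:=\sqrt{N}u_\hidx v_\widx/(\|\U\|^{\epsilon_u}\|\V\|^{\epsilon_v})$. The first step is to expand the product in \eqref{eq:expansion} as a sum over multisets of edges of the complete bipartite graph $K_{M,N}$. Each edge $(\hidx,\widx)$ carries a multiplicity $k\in\{1,2,3,4\}$ and the weight $P^{(k)}_{\hidx\widx}w_{\hidx\widx}^k$. Averaging over the priors factorizes over vertices: a row vertex $\hidx$ contributes $\mathbb{E}[u_\hidx^{d_\hidx}]$ and a column vertex $\widx$ contributes $\mathbb{E}[v_\widx^{d_\widx}]$, where $d$ is the weighted degree. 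By symmetry, only graphs with all weighted degrees even survive. Coordinatewise symmetry of $\Omega^\epsilon_{\mathcal{U}\times\mathcal{V}}$ keeps this parity argument valid with the indicator inserted. In the normalized cases, we use that on $\Omega^\epsilon$ the norms are $1+O(M^{-1/5})$; the resulting corrections are absorbed by a concentration argument, or by expanding $\|\U\|^{-2k}$ around $1$.

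The second step identifies the leading structure, which is a collection of disjoint simple cycles. In a bipartite graph cycles have even length $2k$, with $k$ row and $k$ column vertices. Such a cycle contributes $\prod P^{(1)}_e\cdot N^{k}(MN)^{-k}\cdots$, which is of order $(N/M)^{k/2}$ times a weight when summed against $M^kN^k$ vertex choices. The normalized cycle statistic is
\[ C_k:=\frac{1}{\sqrt{2k}}\Bigl(\frac{N}{M}\Bigr)^{k/2}\sum_{\text{$2k$-cycles}}\prod_{e}P^{(1)}_e .\]
By a bipartite CLT for cycle counts, proved via the method of moments and Wick pairing of cycles, the $C_k$ are asymptotically independent centered Gaussians with variance $\tau_1^{2k}$. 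The factor $\ratio^{-k}$ comes from matching $M^kN^k$ against the normalization $N^{2k}$, giving variance $(\tau_1^2/\ratio)^k/(2k)$. Double edges with $P^{(2)}$ and $(P^{(1)})^2$ contribute deterministic corrections at the $k=1$ level. The $2$-cycle, i.e.\ a doubled edge, produces the $\frac{1}{\ratio}(\tau_1^2-2\tau_2)$ term. The orthogonality assumptions $\mathbb{E}[P^{(1)}P^{(2)}]=0$ etc.\ kill the cross terms.

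The third step is a comparison argument: the log of the expectation should be close to $\sum_{k\le K}\bigl(\text{cycle term}_k-\tfrac12\mathrm{Var}_k\bigr)$ plus the deterministic $k=1$ correction. Summing $-\frac14\log(1-\tau_1^2/\ratio)$ from $\sum_k(\tau_1^2/\ratio)^k/(4k)$ then yields $\mathcal{N}(-\rho,2\rho)$. To justify this, I would compare the expansion with $\mathbb{E}_{\mathcal{U}\times\mathcal{V}}\exp\bigl(\sum_{\hidx\widx}P^{(1)}_{\hidx\widx}w_{\hidx\widx}-\ldots\bigr)$ via a second-moment computation over the disorder. Concretely, I would show
\[ \mathbb{E}_P\Bigl[\bigl(Z-\tilde Z\bigr)^2\Bigr]\ll \bigl(\mathbb{E}_P Z\bigr)^2 ,\]
where $\tilde Z$ is the truncated cycle-exponential. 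The second moment involves two replicas $(\U,\V),(\U',\V')$ and produces $\exp\bigl(\tau_1\sqrt{MN}\langle\U,\U'\rangle\langle\V,\V'\rangle\bigr)$ up to lower-order terms. This is exactly where Condition \ref{cond:prior} with $\tau_1<\snr_c\sqrt{\ratio}$ enters, ensuring the overlap exponential is $O(1)$ and uniformly integrable. The high-degree and non-cycle graphs are then bounded using the cutoffs $|u_\hidx|\le M^{-1/2+\epsilon}$, $|v_\widx|\le N^{-1/2+\epsilon}$ and $\epsilon<1/60$. Each extra excess edge costs a factor $N^{-1/2+O(\epsilon)}$.

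The main obstacle is the second-moment/comparison step: controlling, uniformly in the disorder, all multigraphs that are not unions of disjoint even cycles. These include graphs sharing vertices and edges of multiplicity $3,4$, and the bounds must track the aspect ratio and two different vertex weights. I would first establish a bipartite counting lemma: a connected component with $e$ edges and $v_r+v_c$ vertices, all degrees even, contributes at most $N^{v_r+v_c-e}\cdot N^{O(\epsilon)e}$ relative to its normalization. Components with $e>v$ are then negligible, and components with $e=v$ are exactly cycles. This reduces the problem to the cycle computations of step two.
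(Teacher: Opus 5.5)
You have the right overall picture: the multigraph expansion, parity, a leading order given by disjoint even simple cycles carrying $P^{(1)}$ plus edges carrying $P^{(2)}$, and Condition~\ref{cond:prior} entering through a two-replica second moment. But the step you flag as the main obstacle is not resolved by the tool you propose, and that step is essentially the whole proof. The same step also needs a cycle-length truncation that is uniform in $N$, which your Step~3 does not address.

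\textbf{The counting lemma cannot carry the argument.} A per-component bound $N^{v-e}N^{O(\epsilon)e}$ only helps for graphs of bounded size. The expansion contains multicycles whose number of edges grows with $N$, and an $N^{O(\epsilon)}$ loss per edge (from $|u_a|\le M^{-1/2+\epsilon}$) cannot be summed over them. The prior factor also deviates from its Rademacher value even for large excess-free graphs. Moreover, with $\mathbf{1}(\Omega)$ inserted, or for normalized priors, the prior average does not factorize over vertices at all, contrary to your first step.

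\textbf{What the paper does instead.} It never bounds large graphs individually. It compares $A=\sum_\gamma M^{-m^\gamma/2}V(\gamma)W(\gamma)$ with its Rademacher-prior version in $L^2$ of the disorder. Orthogonality of distinct multicycles, and of similarity classes for multiplicities $3,4$, reduces everything to sums like $\sum_\gamma(\tau_1'/M)^{n^\gamma}(V(\gamma)-1)^2$. The similarity classes are needed because $\mathbb{E}[P^{(1)}P^{(3)}]$ and $\mathbb{E}[P^{(2)}P^{(4)}]$ need not vanish. Three ingredients then finish the job:
\begin{itemize}
\item Long graphs are cut off by Lemma~\ref{lem:1-cycle-cutoff-rademacher}, via the simple-cycle generating function and $\tau_1<\sqrt{d_0}$.
\item All graphs with $V(\gamma)>1$ are disposed of at once by Lemma~\ref{lem:1-cycle-large}. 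It compares the exact Gaussian-noise identity $\sum_\gamma M^{-m^\gamma}V(\gamma)^2\mathbb{E}[W(\gamma)^2]=(1-w^2/d_0)^{-1/2}+o(1)$ with its Rademacher counterpart. That identity is Lemma~\ref{lem:prior-strong}: an overlap CLT plus uniform integrability from Condition~\ref{cond:prior}.
\item Bounded graphs are handled by the prior dichotomy, Lemma~\ref{lem:prior-dichotomy}.
\end{itemize}

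\textbf{If you want to keep your route.} Running your bound $\mathbb{E}_P[(Z-\tilde Z)^2]\to0$ as small subgraph conditioning requires three things the proposal does not supply:
\begin{itemize}
\item $\mathbb{E}_P[Z^2]\to e^{2\rho}$ \emph{exactly}; boundedness is not enough.
\item The planted cross term $\mathbb{E}_P[Z\tilde Z]$.
\item A truncation in cycle length that is uniform in $N$.
\end{itemize}
Also, the replica exponent is $\tau_1N\langle\U,\U'\rangle\langle\V,\V'\rangle$, not $\tau_1\sqrt{MN}\langle\U,\U'\rangle\langle\V,\V'\rangle$. This is why the threshold is $w_c\sqrt{d_0}$.

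\textbf{Your identification of the limit is wrong in a way that affects the variance.} The $P^{(2)}$ edges are not a deterministic correction. The sum $\sum_{a,\alpha}\log(1+P^{(2)}_{a\alpha}/M)$ converges to an independent $\mathcal{N}(-\tau_2/(2d_0),\tau_2/d_0)$. This fluctuation is what makes the variance $2\rho$ rather than $2\rho-\tau_2/d_0$.

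The right cycle statistic is $M^{-k}\sum_{2k\text{-cycles}}\prod_eP^{(1)}_e$. Its variance tends to $(\tau_1^2/d_0)^k/(2k)$, and only $k\ge2$ occurs. The $-\tau_1^2/(4d_0)$ in $\rho$ is the missing $k=1$ term of $-\frac14\log(1-\tau_1^2/d_0)$, not a ``2-cycle'' contribution.

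Finally, passing from the cycle sum to a product needs the analogue of Lemma~\ref{lem:simple-cycle}. It also needs the reduction $R_2\approx R_1\prod_{a,\alpha}(1+P^{(2)}_{a\alpha}/M)$, which the paper obtains from the $3$-multicycle estimate with $P^{(3)}=P^{(1)}P^{(2)}$.
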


Sections \ref{sec:multigraph}--\ref{sec:technical} prove the above proposition using the multigraph strategy of Section 4--6 in \cite{Choo-Han-Lee2026}.
Main difference is there are no self-loops, and every cycles are even length, so the cycle contribution and the limiting parameters differ from those in the Wigner setting.
The estimates also account for $M/N\to\ratio$ and $\tau_n'\to\tau_n$; the strict inequality $\tau_1<\snr_c\sqrt{\ratio}$ provides room to control these varying parameters.

We remark that the orthogonality condition of $P_{\hidx\widx}^{(i)}$ is satisfied if there exist symmetric, i.i.d. random variables $V_{\hidx\widx}$ such that $P_{\hidx\widx}^{(1)}$ and $P_{\hidx\widx}^{(3)}$ are odd functions of $V_{\hidx\widx}$ and $P_{\hidx\widx}^{(2)}$ and $P_{\hidx\widx}^{(4)}$ are even functions of $V_{\hidx\widx}$.

Note that the nonnegativity assumption is obtained from the fact that the above form is from the Taylor expansion of likelihood ratio or partition function, which are both positive.

\section{Proof of main results}\label{sec:thm-proof}

We derive the main results from Proposition \ref{prop:general}.
We first recover the introductory statements from the general theorems.
Theorems \ref{thm:bsk-normalized} and \ref{thm:log-LR} are special cases of Theorems \ref{thm:free-energy} and \ref{thm:lr-additive}, respectively.
\begin{proof}[Proof of Theorem \ref{thm:bsk-normalized} and Theorem \ref{thm:log-LR}]
	Theorem \ref{thm:bsk-normalized} and Theorem \ref{thm:log-LR} are special cases of Theorem \ref{thm:free-energy} with $\epsilon_u=1$ and Theorem \ref{thm:lr-additive}, respectively, since by definition the strictly sub-Gaussian prior is the sub-Gaussian prior with variance proxy $1$.
\end{proof}

{We first note that \ref{thm:Hopfield} is equivalent to Theorem \ref{thm:bsk-iid} with different normalization.}
For the Theorem \ref{thm:bsk-iid}, we use the property of Lee--Yang type random variables to show that Theorem \ref{thm:bsk-iid} is a corollary of the general result, Theorem \ref{thm:free-energy}.
\begin{proof}[Proof of Theorem \ref{thm:bsk-iid}]
	If $\sqrt{M}u_\hidx$ is a Lee--Yang type prior, then by Section 2.2 Theorem 2 of \cite{Newman-Wu2019}, the tilted prior $\sqrt{M}u_\hidx^{M\beta^2 /2}$ is also of Lee--Yang type.
	Also by Proposition 13 and (8) of \cite{Newman-Wu2019a}, Lee--Yang type prior with variance $1$ is a strictly sub-Gaussian prior, thus $\sqrt{M}u_\hidx^{M\beta^2 /2} / \sigma$ is a strictly sub-Gaussian prior.
	Therefore, $\snr_c^u <1$ satisfies Condition \ref{cond:prior-separable}.
	Now by the strict sub-Gaussianity of $\sqrt{N}v_\widx$, $\snr_c^v<1$ always satisfies Condition \ref{cond:prior-separable} with $\V$.
	We conclude that Theorem \ref{thm:bsk-iid} is a special case of Theorem \ref{thm:free-energy} with $\epsilon_u=0$.
\end{proof}

We next prove the general theorems, starting with Theorem \ref{thm:lr-additive}, the likelihood ratio in Section \ref{subsec:proof-lr-additive}.
As in Section 3 of \cite{Choo-Han-Lee2026}, a fourth-order polynomial expansion reduces this problem to the universality proposition; here the coefficients and limiting parameters reflect the rectangular model.
For Theorem \ref{thm:free-energy}, the free energy part, proof in Section \ref{subsec:proof-free-energy} requires an additional reduction when one prior is i.i.d.
We first separate the leading order term by a quadratic change of measure, then establish the truncation and moment estimates under the tilted law before applying Proposition \ref{prop:general}.

We first introduce the following shorthand notations, which will be frequently used in the rest of the paper.
\begin{nrmk}
	We use the standard big-$O$ and little-$o$ notation:
	$a_N=O(b_N)$ implies that there exists $N_0$ such that $|a_N|\le C|b_N|$ for some constant $C>0$ independent of $N$ for all $N\ge N_0$;
	$a_N=o(b_N)$ implies that for any positive constant $\epsilon$ there exists $N_0$ such that $|a_N|\le \epsilon|b_N|$ for all $N\ge N_0$.

	For probabilistic bounds, we use the notation $X=o_{\mathcal{X}}(Y)$ or $X=o_{\mathcal{U}}(Y), X=o_{\mathcal{V}}(Y)$, which implies that for any (small) $\epsilon>0$, $\mathbb{P}_{\mathcal{X}}(|X|>\epsilon|Y|)\to 0$ or $\mathbb{P}_{\mathcal{U}}(|X|>\epsilon|Y|), \mathbb{P}_{\mathcal{V}}(|X|>\epsilon|Y|)\to 0$ as $N\to \infty$, respectively.

	For an event $\Omega$, we say that $\Omega$ holds with high probability if $\mathbb{P}(\Omega^c)=o(1)$.
	Again, we say $\Omega$ holds with high probability with respect to $\mathcal{X}$ (or $\mathcal{U}, \mathcal{V}$) if $\mathbb{P}_{\mathcal{X}}(\Omega^c)=o(1)$ (or $\mathbb{P}_{\mathcal{U}}(\Omega^c)=o(1), \mathbb{P}_{\mathcal{V}}(\Omega^c)=o(1))$.

	For an event $\Omega$, we say that $\Omega$ holds with overwhelming probability if for any (large) $D>0$, there exists $N_0\equiv N_0(D)$ such that $\mathbb{P}(\Omega^c)<N^{-D}$ whenever $N>N_0$.
	Again, we say $\Omega$ holds with overwhelming probability with respect to $\mathcal{X}$ (or $\mathcal{U}, \mathcal{V}$) if $\mathbb{P}_{\mathcal{X}}(\Omega^c)<N^{-D}$ (or $\mathbb{P}_{\mathcal{U}}(\Omega^c)<N^{-D}$, $\mathbb{P}_{\mathcal{V}}(\Omega^c)<N^{-D}$).

	For a sequence of random variables, the notation $\implies$ denotes the convergence in distribution as $M,N\to \infty$.

	For integers $a$ and $b$, the notation $[[a,b]]$ denotes $\{n\in \mathbb{Z}: a\le n\le b\}$.
\end{nrmk}
Note that we can replace $N$ with $M$ for these notations since we are assuming $M, N\to \infty$ with $M / N \to \ratio$.

We begin by introducing a lemma that we will use frequently to neglect rare events in $\mathcal{U}$ and $\mathcal{V}$.

\begin{lem}\label{lem:rare-event-cut}
	Suppose that $R_{M,N}$ is a non-negative random variable depending on $\U,\V$ and $\Noise$.
	If $\Omega_{M,N}\equiv\Omega_{M,N} (\U,\V)$ is an event in $\mathcal{U}\times \mathcal{V}$ with
	\[ \mathbb{E}_{\mathcal{U}\times \mathcal{V}}\left[\mathds{1}(\Omega_{M,N}^c)\mathbb{E}_{\mathcal{\Noise}}[R_{M,N}]\right]=o(1),\]
	then 
	\[ \mathbb{E}_{\mathcal{U}\times \mathcal{V}}[R_N]-\mathbb{E}_{\mathcal{U}\times \mathcal{V}}[\mathds{1}(\Omega_{M,N})R_{M,N}]=o_{\mathcal{\Noise}}(1) .\]
\end{lem}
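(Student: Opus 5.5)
The plan is to split the full average according to $\Omega_{M,N}$ and show that the piece on the complement is small in $\mathcal{X}$-probability. Write
\[
\mathbb{E}_{\mathcal{U}\times \mathcal{V}}[R_{M,N}]-\mathbb{E}_{\mathcal{U}\times \mathcal{V}}[\mathds{1}(\Omega_{M,N})R_{M,N}]=\mathbb{E}_{\mathcal{U}\times \mathcal{V}}[\mathds{1}(\Omega_{M,N}^c)R_{M,N}]=:D_{M,N},
\]
which is a nonnegative random variable depending only on $\Noise$, because $R_{M,N}\ge 0$. (The $R_N$ in the statement is read as $R_{M,N}$.)

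To prove $D_{M,N}=o_{\mathcal{X}}(1)$, it suffices to show $\mathbb{E}_{\mathcal{X}}[D_{M,N}]\to 0$ and then apply Markov's inequality. Indeed, for any $\epsilon>0$,
\[
\mathbb{P}_{\mathcal{X}}(D_{M,N}>\epsilon)\le \epsilon^{-1}\mathbb{E}_{\mathcal{X}}[D_{M,N}].
\]
By Tonelli's theorem, which applies because the integrand is nonnegative and the laws of $\Noise$ and of $(\U,\V)$ are independent product measures,
\[
\mathbb{E}_{\mathcal{X}}[D_{M,N}]=\mathbb{E}_{\mathcal{U}\times \mathcal{V}}\left[\mathds{1}(\Omega_{M,N}^c)\mathbb{E}_{\mathcal{X}}[R_{M,N}]\right],
\]
and this is $o(1)$ by hypothesis. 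Hence $\mathbb{P}_{\mathcal{X}}(D_{M,N}>\epsilon)\to 0$ for every $\epsilon>0$, which is the claim.

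There is no real obstacle here. The only point needing care is the Fubini/Tonelli interchange, and it is justified by the nonnegativity of $R_{M,N}$ together with the independence of $\Noise$ from the priors. Measurability is not an issue, since $\Omega_{M,N}$ depends only on $(\U,\V)$.
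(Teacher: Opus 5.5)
Your proposal is correct and follows essentially the same route as the paper's proof: you write the difference as $\mathbb{E}_{\mathcal{U}\times\mathcal{V}}[\mathds{1}(\Omega_{M,N}^c)R_{M,N}]$, take $\mathbb{E}_{\mathcal{X}}$ and interchange the expectations to invoke the hypothesis, then conclude with Markov's inequality in $\mathcal{X}$. Your explicit appeal to Tonelli via nonnegativity of $R_{M,N}$, and your reading of $R_N$ as a typo for $R_{M,N}$, are both appropriate.
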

\begin{proof}[Proof of Lemma \ref{lem:rare-event-cut}]
	Taking the partial expectation $\mathbb{E}_{\mathcal{\Noise}}$ to $\mathbb{E}_{\mathcal{U}\times \mathcal{V}}[R_{M,N}]-\mathbb{E}_{\mathcal{U}\times \mathcal{V}}[\mathds{1}(\Omega_{M,N})R_{M,N}]$,	
	\begin{align*}
		&\mathbb{E}_{\mathcal{\Noise}}\left[\mathbb{E}_{\mathcal{U}\times \mathcal{V}}[R_{M,N}]-\mathbb{E}_{\mathcal{U}\times \mathcal{V}}[\mathds{1}(\Omega_{M,N})R_{M,N}]\right] \\
		&= \mathbb{E}_\mathcal{\Noise}\mathbb{E}_{\mathcal{U}\times\mathcal{V}}[\mathds{1}(\Omega_{M,N}^c)R_{M,N}]=\mathbb{E}_{\mathcal{U}\times \mathcal{V}}[\mathds{1}(\Omega_{M,N}^c)\mathbb{E}_\mathcal{\Noise}[R_{M,N}]]=o(1).
	\end{align*}
	We get the desired lemma by applying Markov's inequality in $\mathcal{\Noise}$.
\end{proof}

We will frequently use a special case of this lemma when $\mathbb{E}_{\mathcal{\Noise}}[R_{M,N}]$ is bounded.
\begin{cor}\label{cor:rare-event-cut}
	Suppose that $R_{M,N}$ is a non-negative random variable depending on $\U,\V$ and $\Noise$.
	If $\Omega_{M,N}\equiv\Omega_{M,N} (\U,\V)$ is an event in $\mathcal{U}\times \mathcal{V}$ with
	$\mathbb{P}_{\mathcal{U}\times \mathcal{V}}(\Omega_{M,N}^c)=o(a_{M,N}^{-1})$ for some deterministic $a_{M,N}$ and $\mathbb{E}_{\mathcal{\Noise}}[R_{M,N}]=O(a_{M,N})$ almost surely with respect to $\mathcal{U}\times \mathcal{V}$, then
	\[ \mathbb{E}_{\mathcal{U}\times \mathcal{V}}[R_N]-\mathbb{E}_{\mathcal{U}\times \mathcal{V}}[\mathds{1}(\Omega_{M,N})R_{M,N}]=o_{\mathcal{\Noise}}(1) .\]
\end{cor}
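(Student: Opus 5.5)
The plan is to reduce this to Lemma \ref{lem:rare-event-cut} with the choice of $R_{M,N}$ and $\Omega_{M,N}$ unchanged. That lemma needs only one input:
\[
\mathbb{E}_{\mathcal{U}\times\mathcal{V}}\bigl[\mathds{1}(\Omega_{M,N}^c)\,\mathbb{E}_{\mathcal{\Noise}}[R_{M,N}]\bigr]=o(1).
\]
Once this is verified, Lemma \ref{lem:rare-event-cut} gives the conclusion directly. In that lemma, the expectation over $\Noise$ is moved inside the $\mathcal{U}\times\mathcal{V}$ expectation by Tonelli, which applies since $R_{M,N}\ge 0$. The result is then converted into an $o_{\mathcal{\Noise}}(1)$ statement by Markov's inequality, which also uses that the difference $\mathbb{E}_{\mathcal{U}\times\mathcal{V}}[\mathds{1}(\Omega_{M,N}^c)R_{M,N}]$ is nonnegative.

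To verify the input, use the hypothesis that $\mathbb{E}_{\mathcal{\Noise}}[R_{M,N}]=O(a_{M,N})$ almost surely with respect to $\mathcal{U}\times\mathcal{V}$. This means there is a constant $C$, independent of $\U,\V$, and a threshold $N_0$ such that $\mathbb{E}_{\mathcal{\Noise}}[R_{M,N}](\U,\V)\le C a_{M,N}$ for almost every $(\U,\V)$ once $N\ge N_0$. Then
\[
\mathbb{E}_{\mathcal{U}\times\mathcal{V}}\bigl[\mathds{1}(\Omega_{M,N}^c)\,\mathbb{E}_{\mathcal{\Noise}}[R_{M,N}]\bigr]\le C a_{M,N}\,\mathbb{P}_{\mathcal{U}\times\mathcal{V}}(\Omega_{M,N}^c)=C a_{M,N}\cdot o(a_{M,N}^{-1})=o(1).
\]
Here $a_{M,N}>0$ can be assumed without loss of generality, since we may replace $a_{M,N}$ by $|a_{M,N}|$.

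The only delicate point is interpreting ``$O(a_{M,N})$ almost surely'' as a bound that is uniform over $(\U,\V)$, with $C$ and $N_0$ not depending on the spins. I would make this explicit in the statement of the corollary. With only a pointwise bound, whose constant could depend on $(\U,\V)$, the product estimate above would fail. Apart from fixing this convention, the argument is a single line.
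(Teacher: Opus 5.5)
Your proposal is correct and is essentially the paper's proof. You bound $\mathbb{E}_{\mathcal{U}\times\mathcal{V}}[\mathds{1}(\Omega_{M,N}^c)\,\mathbb{E}_{\mathcal{\Noise}}[R_{M,N}]]$ by $C a_{M,N}\,\mathbb{P}_{\mathcal{U}\times\mathcal{V}}(\Omega_{M,N}^c)=o(1)$ and then invoke Lemma \ref{lem:rare-event-cut}. Your remark that the constant in the almost-sure $O(a_{M,N})$ bound must be uniform in $(\U,\V)$ states explicitly the convention the paper uses implicitly when it writes ``for some constant $C$.''
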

\begin{proof}[Proof of Corollary \ref{cor:rare-event-cut}]
	Since $\mathbb{E}_\mathcal{\Noise}[R_{M,N}]=O(a_{M,N})$,
	\[ \mathbb{E}_{\mathcal{U}\times \mathcal{V}}\left[\mathds{1}(\Omega_{M,N}^c)\mathbb{E}_{\mathcal{\Noise}}[R_{M,N}]\right]\le Ca_{M,N}\mathbb{E}_{\mathcal{U}\times \mathcal{V}}[\mathds{1}(\Omega_{M,N}^c)]=Ca_{M,N}\mathbb{P}_{\mathcal{U}\times \mathcal{V}}(\Omega_{M,N}^c) .\]
	for some constant $C$. Now the right side is $o(1)$, we get the desired result by applying lemma \ref{lem:rare-event-cut}.
\end{proof}

\subsection{Proof of Theorem \ref{thm:lr-additive}}\label{subsec:proof-lr-additive}

We now prove our main result on the fluctuation of the log likelihood ratio.

\begin{proof}[Proof of Theorem \ref{thm:lr-additive}]
	Recall the definition of the likelihood ratio $\mathcal{L}(\snr, \epsilon_u, \epsilon_v)$ in Definition \ref{defn:lr-additive}.
	Note that the entries of $\Noise$ are independent.
	Since
	\begin{align*}
		\mathbb{E}_{\mathcal{\Noise}}\left[\frac{p(\sqrt{N}\noise-\sqrt{\snr N}u_\hidx v_\widx / (\|\U\|^{\epsilon_u}\|\V\|^{\epsilon_v}))}{p(\sqrt{N}\noise)}\right]
		&=\int_{\mathbb{R}}\frac{p(x-\sqrt{\snr N}u_\hidx v_\widx / (\|\U\|^{\epsilon_u}\|\V\|^{\epsilon_v}))}{p(x)}\;p(x)\mathrm{d}x \\
		&=\int_{\mathbb{R}}p(x-\sqrt{\snr N}u_\hidx v_\widx / (\|\U\|^{\epsilon_u}\|\V\|^{\epsilon_v}))\mathrm{d}x =1,
	\end{align*}
	we find that $\mathbb{E}_{\mathcal{\Noise}}[\mathcal{L}(\snr, \epsilon_u, \epsilon_v)]=1$, and hence we can apply Corollary \ref{cor:rare-event-cut} to neglect any events in $\mathcal{U}\times \mathcal{V}$ with probability $o(1)$.

	Let
	\begin{equation}
		\begin{split}
			\Omega_{\mathcal{U}\times \mathcal{V}}^\eta:=
			&\{\max_{1\le\hidx\le M}|u_\hidx|\le M^{-1 /2+\eta}\}\cap \{|\|\U\|-1|\le M^{-1 /5} \}\\
			&\{\max_{1\le \widx\le N}|v_\widx|\le N^{-1 /2+\eta}\}\cap \{|\|\V\|-1|\le N^{-1 /5} \}
		\end{split}
	\end{equation}
	and
	\begin{equation}
		\Omega^\eta_{\mathcal{\Noise}}:=\{\max_{\hidx,\widx}|\noise|\le N^{-1 /2+\eta}\}
	\end{equation}
	for some sufficiently small $\eta>0$, which will be explained later.
	From Markov's inequality, we find that $\Omega_{\mathcal{U}\times \mathcal{V}}^\eta$ holds with overwhelming probability.
	Also, $\Omega^{\eta}_{\mathcal{\Noise}}$ holds with overwhelming probability with respect to $\mathcal{\Noise}$.
	Expanding the product in the right side of Definition \ref{defn:lr-additive}, we get
	\begin{align*} 
		&\mathds{1}(\Omega_{\mathcal{U}\times \mathcal{V}}^\eta)\prod_{\hidx,\widx}\frac{p(\sqrt{N}\noise-\sqrt{\snr N}u_\hidx v_\widx / (\|\U\|^{\epsilon_u}\|\V\|^{\epsilon_v}))}{p(\sqrt{N}\noise)} \\
		&=\mathds{1}(\Omega_{\mathcal{U}\times \mathcal{V}}^\eta)\mathds{1}(\Omega^\eta_{\mathcal{\Noise}})\prod_{\hidx,\widx}\frac{p(\sqrt{N}\noise-\sqrt{\snr N}u_\hidx v_\widx / (\|\U\|^{\epsilon_u}\|\V\|^{\epsilon_v}))}{p(\sqrt{N}\noise)}+o_{\mathcal{\Noise}}(1) \\
		&=\mathds{1}(\Omega_{\mathcal{U}\times \mathcal{V}}^\eta)\mathds{1}(\Omega^\eta_{\mathcal{\Noise}})\prod_{\hidx,\widx}\left( 1+p_{\hidx\widx}^{(1)}\frac{\sqrt{N}u_\hidx v_\widx}{\|\U\|^{\epsilon_u}\|\V\|^{\epsilon_v}}+p_{\hidx\widx}^{(2)}\frac{N u_\hidx^2 v_\widx^2}{\|\U\|^{2\epsilon_u}\|\V\|^{2\epsilon_v}} +p_{\hidx\widx}^{(3)}\frac{N^{3 / 2}u_\hidx^3 v_\widx^3}{\|\U\|^{3\epsilon_u}\|\V\|^{3\epsilon_v}}+ p_{\hidx\widx}^{(4)}\frac{N^2 u_\hidx^4 v_\widx^4}{\|\U\|^{4\epsilon_u}\|\V\|^{4\epsilon_v}} \right)\\
		&\quad+o_{\mathcal{\Noise}}(1)\\
		&=\mathds{1}(\Omega_{\mathcal{U}\times \mathcal{V}}^\eta)\prod_{\hidx,\widx}\left( 1+p_{\hidx\widx}^{(1)}\frac{\sqrt{N}u_\hidx v_\widx}{\|\U\|^{\epsilon_u}\|\V\|^{\epsilon_v}}+p_{\hidx\widx}^{(2)}\frac{N u_\hidx^2 v_\widx^2}{\|\U\|^{2\epsilon_u}\|\V\|^{2\epsilon_v}} +p_{\hidx\widx}^{(3)}\frac{N^{3 / 2}u_\hidx^3 v_\widx^3}{\|\U\|^{3\epsilon_u}\|\V\|^{3\epsilon_v}}+ p_{\hidx\widx}^{(4)}\frac{N^2 u_\hidx^4 v_\widx^4}{\|\U\|^{4\epsilon_u}\|\V\|^{4\epsilon_v}} \right)\\
		&\quad+o_{\mathcal{\Noise}}(1)
	\end{align*}
	where we used
	\[ p_{\hidx\widx}^{(n)}=\frac{(-1)^n\snr^{n / 2}p^{(n)}(\sqrt{N}\noise)}{n!\,p(\sqrt{N}\noise)}\qquad (n=1,2,3,4) .\]
	Note that the product term above is nonnegative for sufficiently large $M,N$, since the difference with likelihood ratio is $o_\mathcal{\Noise}(1)$.
	We need to choose $\eta>0$ small that, assuming $\Omega_{\mathcal{U}\times \mathcal{V}}^\eta$ and $\Omega_{\mathcal{\Noise}}^\eta$,
	\[ \sum_{\hidx,\widx}|p^{(5)}(x)/p(x)|N^{5 /2}u_\hidx^5v_\widx^5 / (\|\U\|^{5\epsilon_u}\|\V\|^{5\epsilon_v})\le N^{-1 /10}\]
	for any $x\in [\sqrt{N}\noise-\sqrt{\snr N}u_\hidx v_\widx / (\|\U\|^{\epsilon_u}\|\V\|^{\epsilon_v}), \sqrt{N}\noise]$, uniformly in $\hidx, \widx$.
	(Note that this is possible due to the assumption that the function $p^{(5)}/p$ is polynomially bounded in Assumption \ref{assum:density}.)

	Since $(\Omega_{\mathcal{U}\times \mathcal{V}}^\eta)^c$ is the event in $\mathcal{U}\times \mathcal{V}$ with probability $o(1)$, We now find that 
	\begin{align*}
		&\mathcal{L}(\snr, \epsilon_u, \epsilon_v) \\
		&=\mathbb{E}_{\mathcal{U}\times \mathcal{V}}\left[\prod_{\hidx,\widx}\left( 1+p_{\hidx\widx}^{(1)}\frac{\sqrt{N}u_\hidx v_\widx}{\|\U\|^{\epsilon_u}\|\V\|^{\epsilon_v}}+p_{\hidx\widx}^{(2)}\frac{N u_\hidx^2 v_\widx^2}{\|\U\|^{2\epsilon_u}\|\V\|^{2\epsilon_v}}+p_{\hidx\widx}^{(3)}\frac{N^{3 / 2}u_\hidx^3 v_\widx^3}{\|\U\|^{3\epsilon_u}\|\V\|^{3\epsilon_v}}+ p_{\hidx\widx}^{(4)}\frac{N^2 u_\hidx^4 v_\widx^4}{\|\U\|^{4\epsilon_u}\|\V\|^{4\epsilon_v}} \right)\right.\\
		&\qquad\qquad\times\mathds{1}(\Omega_{\mathcal{U}\times \mathcal{V}}^\epsilon)\bigg]+o_\mathcal{\Noise}(1).
	\end{align*}
	Since $p_{\hidx\widx}^{(n)}$ are odd functions if $n$ is odd, and even functions if $n$ is even, we find that we can apply Proposition \ref{prop:general} with $\epsilon = \eta$, if $\tau_1'=\tau_1=\snr F_p <\snr_c\sqrt{\ratio}$.
	The mean and the variance of the limiting Gaussian can be computed from that $\tau_2'=\tau_2 = \snr^2 G_p / 4$.
	This proves that Theorem \ref{thm:lr-additive} holds for $\mathcal{L}(\snr, \epsilon_u, \epsilon_v)$ if Condition \ref{cond:prior} holds.
\end{proof}

\subsection{Proof of Theorem \ref{thm:free-energy}}\label{subsec:proof-free-energy}

We next prove the result on the free energy of bipartite SK model.
First, we separate the deterministic term $M\Lambda(\beta^2/2)$ from $\log Z_{M,N}$ while maintaing an expansion to which Proposition \ref{prop:general} applies.
For the i.i.d. prior, the quadratic factor is absorbed into the product law by the change of measure in Definition \ref{defn:quadratic-tilt}.
Consequently, the relevant truncation events and moments must be controlled under the tilted law.
We next rescale the tilted coordinates using $\sigma^2=\Lambda'(\beta^2/2)$, and then identify the coefficients in the fourth-order expansion.
This reduction is the additional process needed beyond the normalized-spin free energy argument of \cite{Choo-Han-Lee2026}.

\begin{proof}[Proof of Theorem \ref{thm:free-energy}]
	Recall the definition of $\Lambda$ and $\tilde{\Lambda}$ from \eqref{eq:Lambda} and \eqref{eq:Lambda-iid}, respectively.
	Note that both $\Lambda$ and $\tilde{\Lambda}$ are $M,N$-independent function.
	Define
	\begin{equation}
		\begin{split}
			{\tilde Z}&:=\exp\left( -M\Lambda\left( \frac{\beta^2}{2} \right)+ \beta\sqrt{MN}\sum_{\hidx,\widx}\noise\frac{u_\hidx v_\widx}{\|\U\|^{\epsilon_u}\|\V\|} \right)
		\end{split}
	\end{equation}
	From the strictly sub-Gaussianity of the disorder $\sqrt{N}\noise$,
	\[ \mathbb{E}_{\mathcal{X}}\left[\exp\left( \beta\sqrt{MN}\noise \frac{u_\hidx v_\widx}{\|\U\|^{\epsilon_u}\|\V\|} \right) \right]\le \exp\left(\frac{\beta^2M}{2}\frac{u_\hidx^2 v_\widx^2}{\|\U\|^{2\epsilon_u}\|\V\|^2} \right), \]
	and from the independence of $\noise$'s, we have
	\begin{equation}\label{eq:disorder-expectation}
		\begin{aligned}[b]
			&\mathbb{E}_{\mathcal{X}}\left[\exp\left( \beta\sqrt{MN}\sum_{\hidx,\widx}\noise \frac{u_\hidx v_\widx}{\|\U\|^{\epsilon_u}\|\V\|} \right) \right] \\
			&\le \exp\left( \frac{\beta^2M}{2}\left(\sum_\hidx \frac{u_\hidx^2}{\|\U\|^{2\epsilon_u}}\right)\left( \sum_\widx \frac{v_\widx^2}{\|\V\|^2} \right)  \right) 
			=\exp\left( \frac{\beta^2M}{2}\|\U\|^{2-2\epsilon_u}\right).
		\end{aligned}
	\end{equation}

	For the first case where the prior is normalized($\epsilon_u=1$), the right side of \eqref{eq:disorder-expectation} is exactly $\exp(M\Lambda(\beta^2/ 2))$.
	Therefore, $\mathbb{E}_{\mathcal{X}}[\tilde{Z}]=O(1)$ and we can apply Corollary \ref{cor:rare-event-cut} to cut $o(1)$ events on $\mathcal{U}\times \mathcal{V}$.

	For the second case where the prior is i.i.d.($\epsilon_u=0$), let
	\[ \Omega_\mathcal{U}^{\epsilon}=\left\{ \max_{1\le \hidx\le M}|u_\hidx|\le M^{-1 /2+\epsilon} \right\}  .\]
	Now since 
	\[ \mathbb{E}_{\mathcal{U}}\left[\exp\left(\frac{\beta^2 M}{2} \|\U\|^2\right)\right]= \exp\left(M\Lambda\left( \frac{\beta^2 }{2} \right)\right), \]
	the tilted prior $u_\hidx^{M\beta^2 /2}$ has the law
	\[ \mu^{M\beta^2 /2}(\mathrm{d}x) := \exp\left(-\tilde{\Lambda}\left( \frac{\beta^2}{2} \right)+\frac{\beta^2M}{2}x^2 \right)\mu(\mathrm{d}x),\]
	where $\mu$ is the law of $u_\hidx$.
	Note that $u_\hidx^{M\beta^2 /2}$ is still i.i.d. and symmetric with this change of measure.
	Combining the tilted priors on every coordinates, we have total tilted prior $\U^{M\beta^2 /2}$ as 
	\begin{equation}\label{eq:tilted-prior}
		\mathbb{\mu}^{M\beta^2 /2}(\mathrm{d}{\boldsymbol x}):=\exp\left(-M\Lambda\left( \frac{\beta^2}{2} \right)+\frac{\beta^2M}{2}\|{\boldsymbol x}\|^2 \right)\mathbb{\mu}(\mathrm{d}{\boldsymbol x}),
	\end{equation}
	since $\tilde{\Lambda} = \Lambda$ for the i.i.d. case.
	Let $\mathcal{U}^{M\beta^2 /2}$ be the distribution of the tilted prior.
	We get
	\[\mathbb{E}_{\mathcal{U}}\left[\mathds{1}((\Omega_{\mathcal{U}}^\epsilon)^c)\mathbb{E}_{\mathcal{X}}[\tilde Z]\right] 
	\le \mathbb{E}_{\mathcal{U}}\left[\mathds{1}((\Omega_{\mathcal{U}}^\epsilon)^c)\exp\left(-M\Lambda\left( \frac{\beta^2}{2} \right)+\frac{\beta^2M}{2}\|\U\|^2 \right) \right]
	=\mathbb{P}_{\mathcal{U}^{M\beta^2 /2}}((\Omega_{\mathcal{U}}^\epsilon)^c). \]
	Now by union bound,
	\[ \mathbb{P}_{\mathcal{U}^{M\beta^2 /2}}((\Omega_{\mathcal{U}}^\epsilon)^c) \le M\mathbb{P}(|u_\hidx^{M\beta^2 /2}|> M^{-1/ 2+\epsilon}).\]
	To bound the tail probability, we use exponential Markov inequality on the tilted prior
	\begin{align*} 
		&\mathbb{P}(|u_\hidx^{M\beta^2 /2}|>M^{-1 /2+\epsilon})  \\
		&\le \mathbb{E}\left[e^{\rho M\left(u_\hidx^{M\beta^2 /2}\right)^2}\right]e^{-\rho M^{2\epsilon}}=\exp\left(\tilde{\Lambda}\left( \frac{\beta^2 }{2}+\rho \right)-\tilde{\Lambda} \left( \frac{\beta^2}{2} \right) \right)e^{-\rho M^{2\epsilon}}.
	\end{align*}
	where we choose fixed $\rho>0$ small such that $\tilde{\Lambda} (\rho + \beta^2 / 2)<\infty$.
	Therefore, the tail probability of the tilted prior is $o(1)$ for any $\epsilon >0$ and we get
	\[ \mathbb{E}_{\mathcal{U}\times\mathcal{V}}\left[\mathds{1}((\Omega_\mathcal{U}^\epsilon)^c)\mathbb{E}_{\mathcal{X}}[\tilde Z]\right]=o(1).\]
	Applying Lemma \ref{lem:rare-event-cut},
	\[ \mathbb{E}_{\mathcal{U}\times \mathcal{V}}[\tilde Z]=\mathbb{E}_{\mathcal{U}\times \mathcal{V}}[\mathds{1}(\Omega_{\mathcal{U}}^\epsilon)\tilde Z ]+o_{\mathcal{X}}(1) .\]
	Further, for any integer $s>2$ and $x\in \mathbb{R}$, since $|x|^s \le C_{s,\rho}e^{\rho x^2}$ for some constant $C_{s,\rho}>0$, we have
	\begin{align*} 
		\mathbb{E}[|\sqrt{M}u_\hidx^{M\beta^2 /2}|^s]
		&=\exp\left( -\tilde{\Lambda}\left( \frac{\beta^2}{2} \right) \right)\mathbb{E}\left[|\sqrt{M}u_\hidx|^s\exp\left( \frac{\beta^2 M}{2}u_\hidx^2 \right) \right] \\
		&\le C_{s,\rho}\exp\left(\tilde{\Lambda}\left( \frac{\beta^2}{2}+\rho \right)-\tilde{\Lambda}\left( \frac{\beta^2}{2} \right) \right)=O(1).
	\end{align*}
	Therefore the tilted prior $\sqrt{M}u_\hidx^{M\beta^2 /2}$ has finite every moments.
	Using Markov inequality on the tilted prior, we can concentrate further moments.
	First, we calculate the variance of $u_\hidx^{M\beta^2 /2}$.
	For the i.i.d. prior, since
	\[ \Lambda'(t) =\tilde{\Lambda}'(t)=(\log\mathbb{E}[e^{tMu_\hidx^2}])'=M\frac{\mathbb{E}[u_\hidx^2 e^{tMu_\hidx^2}]}{\mathbb{E}[e^{tMu_\hidx^2}]}=M\exp\left(-\tilde{\Lambda}(t)\right)\mathbb{E}[u_\hidx^2 e^{tMu_\hidx^2}],\]
	we have
	\[ \Lambda'\left(\frac{\beta^2}{2}\right)=M\mathbb{E}[(u_\hidx^{M\beta^2 /2})^2].\]
	Since we only tilt i.i.d. prior and $\Lambda'(t)=1$ for normalized prior, we define
	\[ \sigma^2 := \Lambda'\left( \frac{\beta^2}{2} \right)=\mathrm{Var}\left(\sqrt{M}u_1^{(1-\epsilon_u)M\beta^2 /2}\right).\]
	Now define
	\begin{align*}
		\Omega_\mathcal{U}':=
			&\left\{\max_{1\le \hidx\le M}|u_\hidx|\le \sigma M^{-1 /2+\epsilon}\right\}\cap\left\{\left|\|\U\|-\sigma\right|\le M^{-1 /5}\right\} \\
			&\cap \left\{\left|M\sum_{\hidx}u_\hidx^4-M^2 \mathbb{E}\left[\left(u_1^{(1-\epsilon_u)M\beta^2 /2}\right)^4\right]\right|\le M^{-1 /5}\right\} \\
			&\cap\left\{\left|M^3\sum_{\hidx}u_\hidx^8-M^4\mathbb{E}\left[\left(u_1^{(1-\epsilon_u)M\beta^2 /2}\right)^8\right]\right|\le M^{-1 /5}\right\}.
	\end{align*}
	Then, we have
	\[ \mathbb{E}_{\mathcal{U}\times \mathcal{V}}\left[\mathds{1}((\Omega_{\mathcal{U}}')^c)\mathbb{E}_{\mathcal{X}}[\tilde{Z}]\right]=o(1).\]

	Note that for any event $\Omega_\mathcal{V}$ in $\mathcal{V}$ which holds with high probability with respect to $\mathcal{V}$,
	\begin{align*}
		&\mathbb{E}_{\mathcal{U}\times \mathcal{V}}\left[\mathds{1}(\Omega_\mathcal{V}^c)\mathbb{E}_\mathcal{X}[\tilde Z]\right]
		=\mathbb{E}_{\mathcal{V}}[\mathds{1}(\Omega_\mathcal{V}^c)\mathbb{E}_{\mathcal{U}}[\mathbb{E}_\mathcal{X}[\tilde Z]]] \\
		&\le \mathbb{E}_\mathcal{V}\left[\mathds{1}(\Omega_\mathcal{V}^c)\mathbb{E}_\mathcal{U}\left[\exp\left(-M\Lambda\left( \frac{\beta^2}{2} \right)+\frac{\beta^2M}{2}\|\U\|^2 \right) \right]\right] = \mathbb{P}_\mathcal{V}(\Omega_\mathcal{V}^c)=o(1).
	\end{align*}
	Therefore, by Lemma \ref{lem:rare-event-cut},
	\[ \mathbb{E}_{\mathcal{U}\times \mathcal{V}}[\tilde Z]=\mathbb{E}_{\mathcal{U}\times \mathcal{V}}[\mathds{1}(\Omega_{\mathcal{V}})\tilde Z ]+o_{\mathcal{X}}(1) .\]

	We define
	\begin{equation}
		\begin{split}
			\Omega'_\mathcal{V}
			&:= \left\{\max_{1\le \widx\le N}|v_\widx|\le N^{-1 /2+\epsilon}\right\}\cap\left\{|\|\V\|-1|\le N^{-1 / 5}\right\}\cap \left\{\left|N\sum_{\widx}v_\widx^4 - N^2\mathbb{E}[v_1^4]\right|\le N^{-1/5}\right\}\\
			&\quad \cap \left\{\left|N^3\sum_{\widx}v_\widx^8 - N^4\mathbb{E}[v_1^8]\right|\le N^{-1 /5}\right\}.
		\end{split}
	\end{equation}
	Since $\Omega'_\mathcal{V}$ holds with overwhelming probability with respect to $\mathcal{V}$, we get
	\[ \exp\left( -M\Lambda\left( \frac{\beta^2 }{2} \right) \right)Z_{M,N}=\mathbb{E}_{\mathcal{U}\times \mathcal{V}}[\tilde Z]=\mathbb{E}_{\mathcal{U}\times \mathcal{V}}[\mathds{1}(\Omega'_\mathcal{U})\mathds{1}(\Omega'_\mathcal{V})\tilde Z]+o_{\mathcal{X}}(1), \]
	for both normalized and i.i.d. prior.
	Next, we separate the partial expectation
	\begin{align*}
		&\mathbb{E}_{\mathcal{U}\times \mathcal{V}}[\mathds{1}(\Omega'_\mathcal{U})\mathds{1}(\Omega_\mathcal{V}')\tilde{Z}] \\
		&= \mathbb{E}_{\mathcal{V}}\left[\mathds{1}(\Omega_\mathcal{V}')\mathbb{E}_{\mathcal{U}}\left[\mathds{1}(\Omega'_\mathcal{U})
		\exp\left( -M\Lambda\left( \frac{\beta^2 }{2} \right)+ \beta\sqrt{MN}\sum_{\hidx,\widx}\noise \frac{u_\hidx v_\widx}{\|\U\|^{\epsilon_u}\|\V\|} \right)\right]\right].
	\end{align*}
	Then, for i.i.d. prior,i.e. $\epsilon_u=0$, we get
	\begin{equation}\label{eq:full-expectation}
		\begin{split}
			&\mathbb{E}_{\mathcal{U}}\left[\mathds{1}(\Omega'_\mathcal{U})
			\exp\left( -M\Lambda\left( \frac{\beta^2 }{2} \right)+ \beta\sqrt{MN}\sum_{\hidx,\widx}\noise \frac{u_\hidx v_\widx}{\|\U\|^{\epsilon_u}\|\V\|} \right)\right] \\
			&=\mathbb{E}_{\mathcal{U}^{M\beta^2 /2}}\left[\mathds{1}(\Omega'_\mathcal{U})
			\exp\left(\beta\sqrt{MN}\sum_{\hidx,\widx}\noise \frac{u_\hidx^{M\beta^2 /2} v_\widx}{\|\V\|}-\frac{\beta^2M}{2}\sum_{\hidx,\widx}\frac{\left(u_\hidx^{M\beta^2 /2}\right)^2 v_\widx^2}{\|\V\|^2} \right)\right].
		\end{split}
	\end{equation}
	where we changed to the tilted prior.
	To apply Proposition, we need to normalize $u_\hidx^{M\beta^2 /2}$ to make variance $1$.
	Since $\mathrm{Var}\left(\sqrt{M}u_\hidx^{M\beta^2 /2}\right)=\sigma^2$, we get
	\begin{equation}\label{eq:renormalized}
		\begin{split}
			&\mathbb{E}_{\mathcal{U}\times \mathcal{V}}[\mathds{1}(\Omega'_\mathcal{U})\mathds{1}(\Omega_\mathcal{V}')\tilde{Z}] \\
			&=\mathbb{E}_{\tilde{\mathcal{U}} \times \mathcal{V}}\left[\mathds{1}(\Omega'_\mathcal{U})\mathds{1}(\Omega_\mathcal{V}')
			\exp\left(\beta\sigma\sqrt{MN}\sum_{\hidx,\widx}\noise \frac{\tilde{u}_\hidx v_\widx}{\|\tilde{\U}\|^{\epsilon_u}\|\V\|}-\frac{\beta^2\sigma^2M}{2}\sum_{\hidx,\widx} \frac{\tilde{u}_\hidx^2 v_\widx^2}{\|\tilde{\U}\|^{2\epsilon_u}\|\V\|^2} \right)\right]
		\end{split}
	\end{equation}
	where we let $\tilde{u}_\hidx = u_\hidx^{(1-\epsilon_u)M\beta^2 /2} / \sigma$ and $\tilde{\mathcal{U}}$ as the distribution of $\tilde{u}_\hidx$.
	Note that $\Omega'_{\mathcal{U}}\cap\Omega'_{\mathcal{V}}\subset \Omega_{\tilde{\mathcal{U}}\times \mathcal{V}}^\epsilon$, which is a $\tilde{\U}$ version of $\Omega_{\mathcal{U}\times \mathcal{V}}^\epsilon$ that holds with high probability with respect to $\tilde{\mathcal{U}}$.
	Therefore, we get
	\[ \exp\left( -M\Lambda\left( \frac{\beta^2 }{2} \right) \right)Z_{M,N}=\mathbb{E}_{\mathcal{U}\times \mathcal{V}}[\tilde Z]=\mathbb{E}_{\mathcal{U}\times \mathcal{V}}[\mathds{1}(\Omega'_\mathcal{U})\mathds{1}(\Omega'_\mathcal{V})\mathds{1}(\Omega_{\tilde{\mathcal{U}}\times \mathcal{V}}^\epsilon)\tilde Z]+o_{\mathcal{X}}(1).\]

	We next expand the sum in the exponent in \eqref{eq:renormalized}.
	Define $\Omega=\Omega'_\mathcal{U} \cap \Omega_\mathcal{V}'$ and $\Omega_\mathcal{\Noise}^\epsilon=\{\max_{\hidx,\widx}|\noise|\le N^{-1 /2+\epsilon}\}$.
	Following the argument in the proof of Theorem \ref{thm:lr-additive},
	\begin{align*}
		&\mathds{1}(\Omega)\mathds{1}(\Omega_{\tilde{\mathcal{U}}\times \mathcal{V}}^\epsilon)\exp\left(\beta\sigma\sqrt{MN}\sum_{\hidx,\widx}\noise \frac{\tilde{u}_\hidx v_\widx}{\|\tilde{\U}\|^{\epsilon_u}\|\V\|} -\frac{\beta^2\sigma^2M}{2}\sum_{\hidx,\widx}\frac{\tilde{u}_\hidx^2 v_\widx^2}{\|\tilde{\U}\|^{2\epsilon_u}\|\V\|^2}\right) \\
		&=\mathds{1}(\Omega)\mathds{1}(\Omega_{\tilde{\mathcal{U}}\times \mathcal{V}}^\epsilon)\mathds{1}(\Omega_\mathcal{\Noise}^\epsilon)\exp\left(\beta\sigma\sqrt{MN}\sum_{\hidx,\widx}\noise \frac{\tilde{u}_\hidx v_\widx}{\|\tilde{\U}\|^{\epsilon_u}\|\V\|} -\frac{\beta^2\sigma^2M}{2}\sum_{\hidx,\widx}\frac{\tilde{u}_\hidx^2 v_\widx^2}{\|\tilde{\U}\|^{2\epsilon_u}\|\V\|^2}\right)+o_\mathcal{\Noise}(1) \\
		&=\prod_{\hidx,\widx}\left( 1+P_{\hidx\widx}^{(1)}\frac{\sqrt{N}\tilde{u}_\hidx v_\widx}{\|\tilde{\U}\|^{\epsilon_u}\|\V\|}+P_{\hidx\widx}^{(2)}\frac{N\tilde{u}_\hidx^2 v_\widx^2}{\|\tilde{\U}\|^{2\epsilon_u}\|\V\|^2}+P_{\hidx\widx}^{(3)}\frac{N^{3 / 2}\tilde{u}_\hidx^3 v_\widx^3}{\|\tilde{\U}\|^{3\epsilon_u}\|\V\|^3}+\tilde{P}_{\hidx\widx}^{(4)}\frac{N^2\tilde{u}_\hidx^4 v_\widx^4}{\|\tilde{\U}\|^{4\epsilon_u}\|\V\|^4} \right) \\
		&\quad\times\mathds{1}(\Omega)\mathds{1}(\Omega_{\tilde{\mathcal{U}}\times \mathcal{V}}^\epsilon)\mathds{1}(\Omega_\mathcal{\Noise}^\epsilon) +o_\mathcal{\Noise}(1)\\
		&=\prod_{\hidx,\widx}\left( 1+P_{\hidx\widx}^{(1)}\frac{\sqrt{N}\tilde{u}_\hidx v_\widx}{\|\tilde{\U}\|^{\epsilon_u}\|\V\|}+P_{\hidx\widx}^{(2)}\frac{N\tilde{u}_\hidx^2 v_\widx^2}{\|\tilde{\U}\|^{2\epsilon_u}\|\V\|^2}+P_{\hidx\widx}^{(3)}\frac{N^{3 / 2}\tilde{u}_\hidx^3 v_\widx^3}{\|\tilde{\U}\|^{3\epsilon_u}\|\V\|^3}+\tilde{P}_{\hidx\widx}^{(4)}\frac{N^2\tilde{u}_\hidx^4 v_\widx^4}{\|\tilde{\U}\|^{4\epsilon_u}\|\V\|^4} \right)\\
		&\quad\times\mathds{1}(\Omega)\mathds{1}(\Omega_{\tilde{\mathcal{U}}\times \mathcal{V}}^\epsilon)+o_\mathcal{\Noise}(1)\\
	\end{align*}
	with
	\begin{equation}\label{eq:free-energy-P-def}
		\begin{split}
			&P_{\hidx\widx}^{(1)}=\beta\sigma\sqrt{M}\noise, \quad P_{\hidx\widx}^{(2)}=\frac{\beta^2\sigma^2M}{2N}\left((\sqrt{N}\noise)^2-1\right),\\
			&P_{\hidx\widx}^{(3)}=\frac{\beta^3\sigma^3}{6}\left( \frac{M}{N} \right)^{\frac{3}{2}}\left((\sqrt{N}\noise)^3-3(\sqrt{N}\noise)\right), \\
			&{\tilde P}_{\hidx\widx}^{(4)}=\frac{\beta^4\sigma^4M^2}{24N^2}\left( (\sqrt{N}\noise)^4-6(\sqrt{N}\noise)^2+3 \right).
		\end{split}
	\end{equation}
	We remark that the product term is nonnegative on $\Omega_{\mathcal{U}\times \mathcal{V}}^\epsilon\cap\Omega_{\mathcal{\Noise}}^\epsilon$.
	Note that $\mathbb{E}[P_{\hidx\widx}^{(1)}]=\mathbb{E}[P_{\hidx\widx}^{(2)}]=\mathbb{E}[P_{\hidx\widx}^{(3)}]=0$ and $\mathbb{E}[\tilde P_{\hidx\widx}^{(4)}]=\beta^4\sigma^4M^2 (x_4-3)/ (24N^2)$.
	To make each term in the product centered, we use the relation
	\begin{align*}
		&\mathds{1}(\Omega)\mathds{1}(\Omega_{\tilde{\mathcal{U}}\times \mathcal{V}}^\epsilon)\prod_{\hidx,\widx}\left( 1+P_{\hidx\widx}^{(1)}\frac{\sqrt{N}\tilde{u}_\hidx v_\widx}{\|\tilde{\U}\|^{\epsilon_u}\|\V\|}+P_{\hidx\widx}^{(2)}\frac{N\tilde{u}_\hidx^2 v_\widx^2}{\|\tilde{\U}\|^{2\epsilon_u}\|\V\|^2}+P_{\hidx\widx}^{(3)}\frac{N^{3 / 2}\tilde{u}_\hidx^3 v_\widx^3}{\|\tilde{\U}\|^{3\epsilon_u}\|\V\|^3}+\tilde{P}_{\hidx\widx}^{(4)}\frac{N^2\tilde{u}_\hidx^4 v_\widx^4}{\|\tilde{\U}\|^{4\epsilon_u}\|\V\|^4} \right) \\
		&=\mathds{1}(\Omega)\mathds{1}(\Omega_{\tilde{\mathcal{U}}\times \mathcal{V}}^\epsilon)\prod_{\hidx,\widx}\left( 1+P_{\hidx\widx}^{(1)}\frac{\sqrt{N}\tilde{u}_\hidx v_\widx}{\|\tilde{\U}\|^{\epsilon_u}\|\V\|}+P_{\hidx\widx}^{(2)}\frac{N\tilde{u}_\hidx^2 v_\widx^2}{\|\tilde{\U}\|^{2\epsilon_u}\|\V\|^2}+P_{\hidx\widx}^{(3)}\frac{N^{3 / 2}\tilde{u}_\hidx^3 v_\widx^3}{\|\tilde{\U}\|^{3\epsilon_u}\|\V\|^3}+P_{\hidx\widx}^{(4)}\frac{N^2\tilde{u}_\hidx^4 v_\widx^4}{\|\tilde{\U}\|^{4\epsilon_u}\|\V\|^4} \right) \\
		&\quad\times \left( 1+\frac{\beta^4\sigma^4M^2(x_4-3)}{24N^2} \frac{N^2\tilde{u}_\hidx^4 v_\widx^4}{\|\tilde{\U}\|^{4\epsilon_u}\|\V\|^4} \right)+ o_\mathcal{\Noise}(1)
	\end{align*}
	which replaces ${\tilde P}_{\hidx\widx}^{(4)}$ in \eqref{eq:free-energy-P-def} by
	\[ P_{\hidx\widx}^{(4)}=\frac{\beta^4\sigma^4M^2}{24N^2}\left((\sqrt{N}\noise)^4-6(\sqrt{N}\noise)^2+6-x_4\right) .\]
	We can easily check the orthogonality conditions of $P_{\hidx\widx}^{(n)}$.
	Note that the additional factor can be approximated by
	\begin{align*}
		&\mathds{1}(\Omega)\mathds{1}(\Omega_{\tilde{\mathcal{U}}\times \mathcal{V}}^\epsilon)\prod_{\hidx,\widx}\left( 1+\frac{\beta^4\sigma^4M^2(x_4-3)}{24N^2} \frac{N^2\tilde{u}_\hidx^4 v_\widx^4}{\|\tilde{\U}\|^{4\epsilon_u}\|\V\|^4} \right) \\
		&=(1+O(N^{-1}))\mathds{1}(\Omega)\mathds{1}(\Omega_{\tilde{\mathcal{U}}\times \mathcal{V}}^\epsilon)\prod_{\hidx,\widx}\exp\left( \frac{\beta^4\sigma^4M^2(x_4-3)}{24}\frac{\tilde{u}_\hidx^4 v_\widx^4}{\|\tilde{\U}\|^{4\epsilon_u}\|\V\|^4} \right) \\
		&=(1+O(N^{-1}))\mathds{1}(\Omega)\mathds{1}(\Omega_{\tilde{\mathcal{U}}\times \mathcal{V}}^\epsilon)\exp\left( \frac{\beta^4\sigma^4M^2(x_4-3)}{24}\sum_{\hidx, \widx}\frac{\tilde{u}_\hidx^4 v_\widx^4}{\|\tilde{\U}\|^{4\epsilon_u}\|\V\|^4} \right) \\
		&=(1+O(N^{-1 / 5}))\mathds{1}(\Omega)\mathds{1}(\Omega_{\tilde{\mathcal{U}}\times \mathcal{V}}^\epsilon)\exp\left( \frac{\beta^4\sigma^4M(x_4-3)}{24N}m'_4 n_4 \right).
	\end{align*}
	Here, $n_4=N^2\mathbb{E}[v_\widx^4]$ and
\[ m'_4:=M^2\mathbb{E}[\tilde{u}_\hidx^4]= \frac{M^2}{\sigma^4}\mathbb{E}\left[\left(u_\hidx^{(1-\epsilon_u)\beta^2 /2}\right)^4\right]=\begin{dcases} \Lambda''\left( \frac{\beta^2}{2} \right)\sigma^{-4}+1 & (\epsilon_u=0) \\ m_4 & (\epsilon_u=1). \end{dcases}\]

	So far, we have seen that
	\begin{align*}
		&\exp\left( -M\Lambda\left( \frac{\beta^2 }{2} \right) \right)Z_{M,N}=\mathbb{E}_{\mathcal{U}\times \mathcal{V}}[\mathds{1}(\Omega)\mathds{1}(\Omega_{\tilde{\mathcal{U}}\times \mathcal{V}}^\epsilon)\tilde Z]+o_{\mathcal{X}}(1)  \\
		&=o_\mathcal{\Noise}(1)+\mathbb{E}_{\tilde{\mathcal{U}}\times \mathcal{V}}\left[\mathds{1}(\Omega)\mathds{1}(\Omega_{\tilde{\mathcal{U}}\times \mathcal{V}}^\epsilon)(1+O(N^{-1 / 5}))\exp\left( \frac{\beta^4\sigma^4M(x_4-3)}{24N}m'_4n_4  \right) \right.  \\
		&\quad\left. \times \prod_{\hidx,\widx}\left( 1+P_{\hidx\widx}^{(1)}\frac{\sqrt{N}\tilde{u}_\hidx v_\widx}{\|\tilde{\U}\|^{\epsilon_u}\|\V\|}+P_{\hidx\widx}^{(2)}\frac{N\tilde{u}_\hidx^2 v_\widx^2}{\|\tilde{\U}\|^{2\epsilon_u}\|\V\|^2}+P_{\hidx\widx}^{(3)}\frac{N^{3 / 2}\tilde{u}_\hidx^3 v_\widx^3}{\|\tilde{\U}\|^{3\epsilon_u}\|\V\|^3}+P_{\hidx\widx}^{(4)}\frac{N^2\tilde{u}_\hidx^4 v_\widx^4}{\|\tilde{\U}\|^{4\epsilon_u}\|\V\|^4} \right)\right] \\
		&=o_\mathcal{\Noise}(1)+(1+O(N^{-1 / 5}))\exp\left( \frac{\beta^4\sigma^4M(x_4-3)}{24N}m'_4 n_4  \right) \\
		&\times \mathbb{E}_{\tilde{\mathcal{U}}\times \mathcal{V}}\left[\mathds{1}(\Omega)\mathds{1}(\Omega_{\tilde{\mathcal{U}}\times \mathcal{V}}^\epsilon)\prod_{\hidx,\widx}\left( 1+P_{\hidx\widx}^{(1)}\frac{\sqrt{N}\tilde{u}_\hidx v_\widx}{\|\tilde{\U}\|^{\epsilon_u}\|\V\|}+P_{\hidx\widx}^{(2)}\frac{N\tilde{u}_\hidx^2 v_\widx^2}{\|\tilde{\U}\|^{2\epsilon_u}\|\V\|^2}+P_{\hidx\widx}^{(3)}\frac{N^{3 / 2}\tilde{u}_\hidx^3 v_\widx^3}{\|\tilde{\U}\|^{3\epsilon_u}\|\V\|^3}+P_{\hidx\widx}^{(4)}\frac{N^2\tilde{u}_\hidx^4 v_\widx^4}{\|\tilde{\U}\|^{4\epsilon_u}\|\V\|^4} \right)\right]
	\end{align*}
	which shows that
	\begin{align*}
		&(1+O(N^{-1 / 5}))\exp\left( -M\Lambda\left( \frac{\beta^2 }{2} \right)-\frac{\beta^4\sigma^4M(x_4-3)}{24N}m'_4 n_4 \right)Z_{M,N}\\
		&=\mathbb{E}_{\tilde{\mathcal{U}}\times \mathcal{V}}\left[\mathds{1}(\Omega)\mathds{1}(\Omega_{\tilde{\mathcal{U}}\times \mathcal{V}}^\epsilon)\prod_{\hidx,\widx}\left( 1+P_{\hidx\widx}^{(1)}\frac{\sqrt{N}\tilde{u}_\hidx v_\widx}{\|\tilde{\U}\|^{\epsilon_u}\|\V\|}+P_{\hidx\widx}^{(2)}\frac{N\tilde{u}_\hidx^2 v_\widx^2}{\|\tilde{\U}\|^{2\epsilon_u}\|\V\|^2}+P_{\hidx\widx}^{(3)}\frac{N^{3 / 2}\tilde{u}_\hidx^3 v_\widx^3}{\|\tilde{\U}\|^{3\epsilon_u}\|\V\|^3}+P_{\hidx\widx}^{(4)}\frac{N^2\tilde{u}_\hidx^4 v_\widx^4}{\|\tilde{\U}\|^{4\epsilon_u}\|\V\|^4} \right)\right] \\
		&\quad+o_\mathcal{\Noise}(1)\\
		&=\mathbb{E}_{\tilde{\mathcal{U}}\times \mathcal{V}}\left[\mathds{1}(\Omega_{\tilde{\mathcal{U}}\times \mathcal{V}}^\epsilon)\prod_{\hidx,\widx}\left( 1+P_{\hidx\widx}^{(1)}\frac{\sqrt{N}\tilde{u}_\hidx v_\widx}{\|\tilde{\U}\|^{\epsilon_u}\|\V\|}+P_{\hidx\widx}^{(2)}\frac{N\tilde{u}_\hidx^2 v_\widx^2}{\|\tilde{\U}\|^{2\epsilon_u}\|\V\|^2}+P_{\hidx\widx}^{(3)}\frac{N^{3 / 2}\tilde{u}_\hidx^3 v_\widx^3}{\|\tilde{\U}\|^{3\epsilon_u}\|\V\|^3}+P_{\hidx\widx}^{(4)}\frac{N^2\tilde{u}_\hidx^4 v_\widx^4}{\|\tilde{\U}\|^{4\epsilon_u}\|\V\|^4} \right)\right] \\
		&\quad+o_\mathcal{\Noise}(1),
	\end{align*}
	where the last equality comes from the nonnegativity of the product term on $\Omega_{\tilde{\mathcal{U}}\times \mathcal{V}}^\epsilon\cap\Omega_{\mathcal{\Noise}}^\epsilon$.
	Since we only tilted $\U$ coordinatewisely, $\tilde{\U}$ and $\V$ are still independent and we checked that $\tilde{\U}$ satisfies the conditions of Proposition \ref{prop:general}.
	As in the proof of Theorem \ref{thm:lr-additive}, we apply Proposition \ref{prop:general} to find
	\[ \left( -M\Lambda\left( \frac{\beta^2 }{2}\right) -\frac{\beta^4\sigma^4M(x_4-3)}{24N}m'_4 n_4 \right)+\log Z_{M,N}\implies \mathcal{N}(-\rho, 2\rho),\]
	which proves that
	\[N\left(F_{M,N}(\beta)- \frac{M}{N}\Lambda\left( \frac{\beta^2}{2}\right)\right)\]
	converges in distribution to a Gaussian.

	For the limiting free energy, 
	\[  \frac{M}{N}\Lambda\left( \frac{\beta^2}{2} \right) \to \ratio\Lambda\left( \frac{\beta^2}{2} \right). \]

	It remains to find the mean and the variance of the limiting distribution.
	Since
	\begin{align*}
		\tau'_1 &=\frac{M}{N}\beta^2\sigma^2\to \ratio\beta^2\sigma^2=:\tau_1  \\
		\tau'_2 &=\left(\frac{M}{N}\right)^2\frac{\beta^4\sigma^4}{4}(x_4-1) \to \frac{\ratio^2\beta^4\sigma^4}{4}(x_4-1)=:\tau_2,
	\end{align*}
	it can be readily checked that $\rho$ in Proposition is given by
	\[ \rho=-\frac{1}{4}\left( \log\left( 1-\ratio\beta^4\sigma^4 \right)-\frac{\ratio\beta^4\sigma^4}{2}(x_4-3) \right),  \]
	and the variance $V_F=2\rho$.
	Finally, the mean $m_F$ is given by
	\begin{align*}
		m_F&=-\rho+\frac{\ratio\beta^4\sigma^4}{24}(x_4-3)m'_4n_4 \\
			 &=\frac{1}{4}\log\left( 1-\ratio\beta^4\sigma^4 \right)+\frac{\ratio\beta^4\sigma^4}{24}(x_4-3)(m'_4n_4-3).
	\end{align*}
	Note that the condition of Proposition \ref{prop:general}, $\tau_1<\snr_c\sqrt{\ratio}$ is equivalent to $\sqrt{\ratio}\beta^2 \sigma^2 < \snr_c $.
	This proves Theorem.
\end{proof}

\section{Bipartite multigraph expansion}\label{sec:multigraph}

Sections \ref{sec:multigraph}--\ref{sec:technical} adapt the multigraph framework of Sections 4--6 in Choo, Han, and Lee \cite{Choo-Han-Lee2026} to the bipartite setting.
We maintain the sequence of reductions and the notation to identify the corresponding results from that work.

\subsection{Definition of bipartite multigraphs}

\begin{defn}[Bipartite graph]
	We say a graph $\gamma$ is a bipartite graph if the vertices can be partitioned into $2$ classes $U,V$ such that every edge has its ends in different classes.
\end{defn}

\begin{defn}[Bipartite $l$-multigraph]
	We say that a bipartite multigraph $\gamma$ with vertices $(U,V)$ where $U:=\{1,2,\ldots,M\}$ and $V:=\{1,2,\ldots,N\}$ is an bipartite $l$-multigraph if any pair of vertices can be connected by at most $l$ edges.
	The multiplicity $m_{\hidx\widx}^{\gamma}$ between the vertices $\hidx\in U$ and $\widx\in V$ is the number of separate edges connecting them.
	We let $m_{\hidx\widx}^{\gamma} = 0$ if the vertices $\hidx$ and $\widx$ are not connected.
	If $m_{\hidx\widx}^{\gamma} = 1,2,3,4$, we say that the edge between the nodes $\hidx$ and $\widx$ is single, double, triple, quadruple, respectively.
\end{defn}
Note that by the definition of the bipartite graph, self edge is not allowed. 

The degree of the vertex $\hidx\in U$ and $\widx \in V$ is defined by
\[ d_\hidx^\gamma = \sum_{\widx=1}^{N}m_{\hidx\widx}^{\gamma},\quad d_\widx^\gamma = \sum_{\hidx=1}^{M}m_{\hidx\widx}^{\gamma},\]
respectively.
The size $m^\gamma$ and the length $n^\gamma$ is defined by
\[ m^\gamma=\sum_{\hidx,\widx}m_{\hidx\widx}^\gamma=\sum_{\hidx=1}^{M}d_\hidx^\gamma=\sum_{\widx=1}^{N}d_\widx^\gamma, \quad n^\gamma=\sum_{\hidx,\widx}\mathds{1}(m_{\hidx\widx}^\gamma > 0), \]
respectively.
The number of single (respectively, double, triple, quadruple) edges $n_1^\gamma$ (respectively, $n_2^\gamma, n_3^{\gamma}, n_4^{\gamma}$) is defined by
\[ n_r^{\gamma}=\sum_{\hidx,\widx}\mathds{1}(m_{\hidx\widx}^\gamma=r),\quad r=1,2,3,4.\]

The set of all bipartite $l$-multigraphs is denoted by  $\mathcal{G}^l$. (The empty graph is a bipartite multigraph and included in $\mathcal{G}^l$.)

We expand the product \eqref{eq:expansion} and rewrite it by using the bipartite $4$-multigraphs as
\begin{align*} 
	&\sum_{\gamma\in \mathcal{G}^4}\left(\prod_{\hidx,\widx}P_{\hidx\widx}^{(m_{\hidx\widx}^\gamma)}\left(\frac{\sqrt{N}u_\hidx v_\widx}{\|\U\|^{\epsilon_u}\|\V\|^{\epsilon_v}}\right)^{m_{\hidx\widx}^\gamma} \right) \\
	&= \sum_{\gamma\in \mathcal{G}^4}N^{m^{\gamma} / 2}\left( \prod_{\hidx,\widx}P_{\hidx\widx}^{(m_{\hidx\widx}^\gamma)} \right) \left( \prod_{\hidx=1}^{M}\left(\frac{u_\hidx}{\|\U\|^{\epsilon_u}}\right)^{d_\hidx^\gamma} \right)\left(\prod_{\widx=1}^{N}\left(\frac{v_\widx}{\|\V\|^{\epsilon_v}}\right)^{d_\widx^\gamma}\right), 
\end{align*}
where we define $P_{\hidx\widx}^{(0)}=1$.

Due to the symmetry of the prior, we find that if any of the vertices in $\gamma$ has an odd degree then 
\[ \mathbb{E}_{\mathcal{U}\times \mathcal{V}}\left[\left( \prod_{\hidx=1}^{M}\left(\frac{u_\hidx}{\|\U\|^{\epsilon_u}}\right)^{d_\hidx^\gamma} \right)\left(\prod_{\widx=1}^{N}\left(\frac{v_\widx}{\|\V\|^{\epsilon_v}}\right)^{d_\widx^\gamma}\right)\right]
=\mathbb{E}_\mathcal{U}\left[\prod_{\hidx=1}^{M}\left(\frac{u_\hidx}{\|\U\|^{\epsilon_u}}\right)^{d_\hidx^\gamma}\right]\mathbb{E}_\mathcal{V}\left[\prod_{\widx=1}^{N}\left(\frac{v_\widx}{\|\V\|^{\epsilon_v}}\right)^{d_\widx^\gamma}\right]=0. \]
Thus, we are led to consider bipartite $4$-multigraphs with even degrees for which we use the following definition:

\begin{defn}[Bipartite $l$-multicycle]
	We say that a bipartite multigraph $\gamma$ is a bipartite multicycle if the degree of any vertex of $\gamma$ is even.
	The set of all bipartite $l$-multicycles is denoted by $\mathcal{B}^l$.
\end{defn}
Note that every cycle in bipartite graph has an even length.

From now on, we will exclusively consider the bipartite multicycles. More precisely, we decompose the prior by letting
\begin{equation}
	\frac{u_\hidx}{\|\U\|^{\epsilon_u}}  = \frac{1}{\sqrt{M}}r^{(1)}_\hidx s_\hidx, \quad \frac{v_\widx}{\|\V\|^{\epsilon_v}} = \frac{1}{\sqrt{N}}r^{(2)}_\widx t_\widx,
\end{equation}
where $r^{(1)}$'s and $r^{(2)}$'s are independent Rademacher random variables and, $s_\hidx$'s and $t_\widx$'s are (respectively) non-negative identically distributed random variables.
Denote by $\mathcal{S}, \mathcal{T}$ the probability space for $\boldsymbol{s}=(s_1,\ldots,s_M)$ and $\boldsymbol{t}=(t_1,\ldots,t_N)$ respectively.
Since $r^{(1)}$'s and $r^{(2)}$'s are Rademacher random variables, by considering the expectation with respect to $r^{(1)}$ and $r^{(2)}$ first, and then the expectation with respect to $s_\hidx$'s and $t_\widx$'s, we are led to consider
\begin{equation}\label{eq:A-decomp-full}
	\sum_{\gamma\in \mathcal{B}^4}\frac{1}{M^{m^\gamma / 2}}\mathbb{E}_\mathcal{S}\left[ \prod_{\hidx=1}^M s_\hidx^{d_\hidx^\gamma} \right] \mathbb{E}_\mathcal{T}\left[\prod_{\widx=1}^N t_\widx^{d_\widx^\gamma}\right]\left( \prod_{\hidx,\widx}P_{\hidx\widx}^{(m_{\hidx\widx}^\gamma)} \right).
\end{equation}

\subsection{Notations and assumptions for the bipartite multigraph expansion}
Instead of analyzing the object \eqref{eq:A-decomp-full} directly, we use the 'conditioning', i.e., we assume a high probability event with respect to $\mathcal{U}\times \mathcal{V}$ in the analysis of \eqref{eq:A-decomp-full}.
Since the product term is nonnegative on $\Omega_{\mathcal{U}\times \mathcal{V}}^\epsilon\cap\Omega_{\mathcal{\Noise}}^\epsilon$ and
\[ \mathbb{E}\left[\left( \prod_{\hidx,\widx}P_{\hidx\widx}^{(m^\gamma_{\hidx\widx})} \right) \right]=0\]
for any $\gamma\in \mathcal{B}^4$ except when $\gamma$ is empty, by applying Lemma \ref{lem:rare-event-cut}, we can see that any event with probability $o(1)$ is negligible in the proof of Proposition \ref{prop:general}.

One of the most important features of the conditioning is that we need Condition \ref{cond:prior} to control large graphs.
Recall that in Condition \ref{cond:prior}, we assumed that
\[ \mathbb{E}_{\substack{\U,\U'\sim\mathcal{U},\\\V,\V'\sim\mathcal{V}}}
	\left[\mathds{1}(\Omega_{M,N}(\U,\V))\mathds{1}(\Omega_{M,N}(\U',\V'))
\exp\left(\sqrt{MN}\snr_c\langle\U,\U'\rangle \langle\V,\V'\rangle \right)\right]=O(1)\]
is bounded.
If we let $\snr = \snr_c\sqrt{M / N}$, the quantity above corresponds to the second moment of the likelihood ratio for i.i.d. prior on the event $\Omega_{M,N}$ with GOE disorder, i.e.,
\[ \mathbb{E}_{\mathcal{\Noise}}\left[\mathbb{E}_{\mathcal{U}\times\mathcal{V}}\left[\mathds{1}(\Omega_{M,N})\prod_{\hidx,\widx} \frac{p(\sqrt{N}\noise-\sqrt{\snr N}u_\hidx v_\widx)}{p(\sqrt{N}\noise)}\right]^2\right]\]
where $p$ is the density of the standard Gaussian distribution.

We will strengthen Condition \ref{cond:prior} as follows.
\begin{lem}\label{lem:prior-strong}
	Suppose that $\snr_c$ satisfies Condition \ref{cond:prior}.
	Then, there exists a sequence of events $\Omega'_{M,N}$ in $\mathcal{U}\times \mathcal{V}$ with $\mathbb{P}_{\mathcal{U}\times \mathcal{V}}((\Omega'_{M,N})^c)=o(1)$ such that for any $\snr <\snr_c\sqrt{\ratio}$,
	\begin{equation}\label{eq:prior-strong-iid}
		\begin{split}
			&\mathbb{E}_{\substack{\U,\U'\sim\mathcal{U},\\\V,\V'\sim\mathcal{V}}}
			\left[\mathds{1}(\Omega'_{M,N}(\U,\V))\mathds{1}(\Omega'_{M,N}(\U',\V'))
			\exp\left(\frac{N\snr \langle\U,\U'\rangle \langle\V,\V'\rangle}{\|\U\|^{\epsilon_u}\|\U'\|^{\epsilon_u}\|\V\|^{\epsilon_v}\|\V'\|^{\epsilon_v}} \right)\right] \\
			&=\left(1-\frac{\snr^2}{\ratio}\right)^{-1 / 2}+o(1),
		\end{split}
	\end{equation}
	for every $\epsilon_u, \epsilon_v \in \{0,1\}$.
\end{lem}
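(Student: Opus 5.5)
The plan is to reduce the strengthened second-moment bound to a Gaussian-type limit for the rescaled overlaps, and to use Condition \ref{cond:prior} only as a uniform integrability device. Write $X_M:=\sqrt{M}\langle\U,\U'\rangle$ and $Y_N:=\sqrt{N}\langle\V,\V'\rangle$. Since the coordinates $\sqrt{M}u_\hidx\sqrt{M}u'_\hidx$ are i.i.d., centered, of unit variance and with all moments bounded (Definition \ref{defn:prior}), the central limit theorem gives $X_M\Rightarrow \xi$ and $Y_N\Rightarrow\eta$, with $\xi,\eta$ independent standard Gaussians. The exponent in \eqref{eq:prior-strong-iid} equals
\[
\frac{N}{\sqrt{MN}}\,\snr\,\frac{X_MY_N}{\|\U\|^{\epsilon_u}\|\U'\|^{\epsilon_u}\|\V\|^{\epsilon_v}\|\V'\|^{\epsilon_v}},
\]
and $\sqrt{N/M}\to \ratio^{-1/2}$. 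On an event where all four norms are $1+o(1)$, the exponent therefore converges in distribution to $\snr\ratio^{-1/2}\xi\eta$. Moreover,
\[
\mathbb{E}\bigl[e^{a\xi\eta}\bigr]=\mathbb{E}\bigl[e^{a^2\xi^2/2}\bigr]=(1-a^2)^{-1/2}
\]
for $|a|<1$. Taking $a=\snr/\sqrt{\ratio}$ recovers $(1-\snr^2/\ratio)^{-1/2}$.

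The events are chosen as follows. I take
\[
\Omega'_{M,N}:=\Omega_{M,N}\cap\bigl\{|\|\U\|-1|\le M^{-1/5}\bigr\}\cap\bigl\{|\|\V\|-1|\le N^{-1/5}\bigr\},
\]
where $\Omega_{M,N}$ is the event from Condition \ref{cond:prior}. The norm events hold with overwhelming probability by moment bounds and Markov's inequality, so $\mathbb{P}((\Omega'_{M,N})^c)=o(1)$. On $\Omega'_{M,N}(\U,\V)\cap\Omega'_{M,N}(\U',\V')$ the normalizing factors in the exponent are $1+O(N^{-1/5})$. Hence the integrand is bounded by $\exp\bigl(\snr(1+o(1))\sqrt{N/M}\,X_MY_N\bigr)$, and similarly from below when $X_MY_N<0$.

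The core step is uniform integrability. Because $\snr<\snr_c\sqrt{\ratio}$, I fix $p>1$ with $p\,\snr(1+\delta)\sqrt{N/M}\le \snr_c$ for all large $M,N$ and some small $\delta>0$. Then the $p$-th power of the integrand on $\Omega'\times\Omega'$ is bounded by $\exp(\sqrt{MN}\,\snr_c\langle\U,\U'\rangle\langle\V,\V'\rangle)$ restricted to $\Omega_{M,N}\times\Omega_{M,N}$. Its expectation is $O(1)$ by Condition \ref{cond:prior}. (The terms with $X_MY_N<0$ are bounded by $1$ and cause no issue.) This gives uniform integrability of the integrand times the indicators. Since the indicators converge to $1$ in probability, the integrand converges in distribution to $\exp(\snr\ratio^{-1/2}\xi\eta)$. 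By Vitali's theorem the expectation then converges to $\mathbb{E}[e^{(\snr/\sqrt{\ratio})\xi\eta}]=(1-\snr^2/\ratio)^{-1/2}$; finiteness here is guaranteed by $\snr<\snr_c\sqrt\ratio$ together with $\snr_c\le1$. The last fact follows from the Condition itself: the bounded expectation forces $\mathbb{E}e^{\snr_c\xi\eta}<\infty$ by Fatou's lemma.

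The case $\epsilon_u=\epsilon_v=0$ is included without change, because the norms are $1+o(1)$ on $\Omega'$. The event $\Omega'_{M,N}$ does not depend on $\snr$, so a single sequence of events works for all $\snr<\snr_c\sqrt\ratio$ simultaneously; for each fixed $\snr$ one just picks a suitable $p$.

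The main obstacle I expect is the joint convergence in distribution of $X_MY_N$ once we condition on, or multiply by, the indicators of $\Omega_{M,N}$, which is an arbitrary high-probability event. This is handled by the fact that convergence in distribution is unaffected by modifications on events of probability $o(1)$. A second point to check is that the exponent $p$ absorbs the $1+O(N^{-1/5})$ norm corrections together with the fluctuation of $\sqrt{N/M}$ around $\ratio^{-1/2}$; the strict inequality $\snr<\snr_c\sqrt\ratio$ is exactly what leaves room for both.
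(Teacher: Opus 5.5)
Your proposal is correct and follows essentially the same route as the paper: the CLT for $\sqrt{M}\langle\U,\U'\rangle$ and $\sqrt{N}\langle\V,\V'\rangle$, then uniform integrability from an $L^{p}$ bound with $p>1$ (obtained by dominating the exponent by $\sqrt{MN}\snr_c\langle\U,\U'\rangle\langle\V,\V'\rangle$ and invoking Condition \ref{cond:prior}), and finally the Gaussian identity $\mathbb{E}[e^{a\xi\eta}]=(1-a^2)^{-1/2}$ with $a=\snr/\sqrt{\ratio}$. Your version is slightly tidier: a single $\snr$-independent event with the norm constraints built in serves all four choices of $(\epsilon_u,\epsilon_v)$ at once, and you handle the region $\langle\U,\U'\rangle\langle\V,\V'\rangle<0$ explicitly by bounding the integrand by $1$, a step the paper's domination argument passes over without comment.
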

We will prove Lemma \ref{lem:prior-strong} in Section \ref{sec:technical}.

In the rest of the paper, we will assume that the following event holds:
\begin{defn}\label{defn:omega}
	We assume that $\Omega = \Omega'_{M,N}\cap\Omega_{\mathcal{U}\times \mathcal{V}}^\epsilon\cap\Omega''_{M,N}$, where $\Omega'_{M,N}$ is the event in Lemma \ref{lem:prior-strong}, $\Omega_{\mathcal{U}\times \mathcal{V}}^\epsilon$ in Proposition \ref{prop:general}, and
	\begin{align*} 
		\Omega''_{M,N} 
		&:= \left\{|\|\U\|-1|\le M^{-1 / 5}\right\}\cap \left\{\left|M\sum_{\hidx}u_\hidx^4 - m_4\right|\le M^{-1/5}\right\} \cap \left\{\left|M^3\sum_{\hidx}u_\hidx^8 - m_8\right|\le M^{-1 /5}\right\}\\
		&\quad\cap\left\{|\|\V\|-1|\le N^{-1 / 5}\right\}\cap \left\{\left|N\sum_{\widx}v_\widx^4 - n_4\right|\le N^{-1/5}\right\} \cap \left\{\left|N^3\sum_{\widx}v_\widx^8 - n_8\right|\le N^{-1 /5}\right\}.
	\end{align*}
	where we let
	\[ m_4:= M^2\mathbb{E}[u_1^4], \quad m_8:=M^4\mathbb{E}[u_1^8],\quad n_4:=N^2\mathbb{E}[v_1^4],\quad n_8:=N^4\mathbb{E}[v_1^8].\]
	Note that $\Omega$ holds with high probability with respect to $\mathcal{U}\times \mathcal{V}$ such that the event $\Omega$ is still coordinatewise symmetric.
\end{defn}

We now let
\begin{equation}
	A:=\sum_{\gamma\in \mathcal{B}^4}\frac{1}{M^{m^\gamma /2}}\mathbb{E}_{\mathcal{S}\times \mathcal{T}}\left[\mathds{1}(\Omega)\prod_{\hidx=1}^M s_\hidx^{d_\hidx^\gamma}\prod_{\widx=1}^N t_\widx^{d_\widx^\gamma}\right]\left( \prod_{\hidx,\widx}P_{\hidx\widx}^{(m_{\hidx\widx}^\gamma)} \right) 
\end{equation}
with the event $\Omega$ in Definition \ref{defn:omega}.
It is clear that in order to prove Proposition \ref{prop:general}, it suffices to show that $\log A \implies \mathcal{N}(-\rho, 2\rho)$.

Lastly, we introduce the following to ease the notation in the analysis of $A$.
\begin{defn}\label{defn:factors}
	For a bipartite multicycle $\gamma$, we define its prior factor $V(\gamma)$ by
	\[ V(\gamma):=\mathbb{E}_{\mathcal{S}\times \mathcal{T}}\left[\mathds{1}(\Omega)\prod_{\hidx=1}^M s_\hidx^{d_\hidx^\gamma}\prod_{\widx=1}^N t_\widx^{d_\widx^\gamma}\right] ,\]
	and the random variable associated with the graph $\gamma$ 
	\[  W(\gamma):= \prod_{\hidx,\widx}P_{\hidx\widx}^{(m_{\hidx\widx}^{\gamma})} .\]
\end{defn}

With Definition \ref{defn:factors}, we can find
\begin{equation}\label{eq:A-decomp}
	A = \sum_{\gamma\in \mathcal{B}^4}\frac{1}{M^{m^\gamma / 2}}V(\gamma)W(\gamma).
\end{equation}
We remark that if $\gamma$ and $\gamma'$ do not intersect, i.e., $m_{\hidx\widx}^{\gamma}m_{\hidx\widx}^{\gamma'}=0$ for any $\hidx,\widx$, then $W(\gamma \cup \gamma')=W(\gamma)W(\gamma')$.

\section{Proof of Proposition \ref{prop:general}}\label{sec:prop-proof}

We prove Proposition \ref{prop:general} by following the sequence of multigraph estimates in Section 5 of \cite{Choo-Han-Lee2026}.
Unlike the Wigner setting, no separate self-loop contribution occurs here.

\subsection{Bipartite 1-multicycle universality}

The following estimate and monotonicity statements are the bipartite counterparts of Lemmas 5.1 and 5.2 in \cite{Choo-Han-Lee2026}.
The orthogonality argument is unchanged.

\begin{lem}\label{lem:1-cycle}
	Define
	\begin{equation}\label{eq:1-cycle}
		A_1 := \sum_{\gamma\in \mathcal{B}^1}\frac{1}{M^{m^\gamma / 2}}V(\gamma)W(\gamma)
	\end{equation}
	and
	\begin{equation}\label{eq:1-cycle-rademacher}
		R_1 := \sum_{\gamma\in \mathcal{B}^1}\frac{1}{M^{m^\gamma / 2}}W(\gamma).
	\end{equation}
	Then, $\mathbb{E}[(A_1-R_1)^2]=o(1)$.
\end{lem}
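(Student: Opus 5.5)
Since every $\gamma\in\mathcal{B}^1$ is a simple bipartite graph, $W(\gamma)=\prod_{(\hidx,\widx)\in\gamma}P^{(1)}_{\hidx\widx}$ is a product of distinct independent centered variables. For $\gamma\neq\gamma'$ the symmetric difference contains an edge, so $\mathbb{E}[W(\gamma)W(\gamma')]=0$. Hence the sum is orthogonal in $L^2(\mathcal{\Noise})$:
\[
\mathbb{E}[(A_1-R_1)^2]=\sum_{\gamma\in\mathcal{B}^1,\,\gamma\ne\emptyset}\frac{(\tau_1')^{m^\gamma}}{M^{m^\gamma}}\bigl(V(\gamma)-1\bigr)^2 .
\]
The empty graph contributes $(V(\emptyset)-1)^2=\mathbb{P}(\Omega^c)^2=o(1)$. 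It remains to control $(V(\gamma)-1)^2$ uniformly enough over the nonempty $1$-multicycles. I would split the sum by the size $m^\gamma$: graphs of size at most $K$, with $K$ a large fixed cutoff or one growing slowly, and graphs of size greater than $K$.

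\emph{Small graphs.} The vertices of $\gamma$ have degree at most $m^\gamma\le K$. On $\Omega_{\mathcal{U}\times\mathcal{V}}^\epsilon$, and using the concentration of $\|\U\|,\|\V\|$ in $\Omega''_{M,N}$, I would show that $V(\gamma)=\prod_\hidx\mathbb{E}[s^{d_\hidx}]\prod_\widx\mathbb{E}[t^{d_\widx}]\,(1+o(1))$, with the error uniform in $m^\gamma\le K$. I would also separate the vertices of degree $2$, where $\mathbb{E}[s^2]=1+o(1)$ and $\mathbb{E}[t^2]=1+o(1)$, from those of degree at least $4$. This gives $|V(\gamma)-1|\le o(1)+C^{k(\gamma)}$, where $k(\gamma)$ counts the vertices of degree at least $4$.

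The number of simple bipartite cycles of size $m=2\ell$ whose vertices all have degree $2$ is about $M^\ell N^\ell/(2\ell)$. Weighting these by $(\tau_1')^{m}M^{-m}\cdot o(1)$ gives $o(1)\sum_\ell(\tau_1^2/\ratio)^\ell/(2\ell)=o(1)$, because $\tau_1^2/\ratio<\snr_c^2\le1$; this is where $\tau_1<\snr_c\sqrt{\ratio}$ enters. Each vertex of degree at least $4$ removes a free vertex and so costs a factor $N^{-1}$, which makes those graphs negligible.

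\emph{Large graphs.} This is the main obstacle, since for large $m^\gamma$ the crude bound on $V(\gamma)$ is useless. Following the corresponding step in \cite{Choo-Han-Lee2026}, I would bound
\[
\sum_{\gamma}\frac{(\tau_1')^{m^\gamma}}{M^{m^\gamma}}\bigl(V(\gamma)^2+1\bigr)
\]
by writing $V(\gamma)^2$ as an expectation over two independent replicas $(\U,\V)$ and $(\U',\V')$ restricted to $\Omega$. The sum over simple even subgraphs, with edge weight $w_{\hidx\widx}=\tau_1' N u_\hidx v_\widx u'_\hidx v'_\widx/(\text{norms})$, is dominated by the generating function
\[
\prod_{\hidx,\widx}(1+w_{\hidx\widx})\le\exp\Bigl(\tau_1' N\frac{\langle\U,\U'\rangle\langle\V,\V'\rangle}{\|\U\|^{\epsilon_u}\|\U'\|^{\epsilon_u}\|\V\|^{\epsilon_v}\|\V'\|^{\epsilon_v}}\Bigr).
\]
Lemma \ref{lem:prior-strong} evaluates this at $\snr=\tau_1'<\snr_c\sqrt{\ratio}$ as $(1-\tau_1^2/\ratio)^{-1/2}+o(1)$, and the same value is obtained for the pure-$1$ term.

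To isolate the tail $m^\gamma>K$, I would compare the whole sum with a slightly larger $\snr'\in(\tau_1,\snr_c\sqrt{\ratio})$, again using Lemma \ref{lem:prior-strong}. Since the size-$m$ contribution at $\tau_1$ is at most $(\tau_1/\snr')^m$ times the contribution at $\snr'$, the tail is bounded by $(\tau_1/\snr')^{K}\cdot O(1)$, and this can be made arbitrarily small by taking $K$ large. The delicate point is the positivity needed for this comparison. I would handle it by symmetrizing over the Rademacher signs, so that only even-degree terms survive, each with a nonnegative weight $s^d t^d$. Combining the small-graph and large-graph bounds gives $\mathbb{E}[(A_1-R_1)^2]=o(1)$.
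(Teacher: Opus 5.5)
Your proof is correct, but it takes a different route from the paper's.

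\textbf{How the paper argues.} After the same orthogonality step, the paper splits $\sum_{\gamma\in\mathcal{B}^1}(\tau_1'/M)^{m^\gamma}(V(\gamma)-1)^2$ by the \emph{sign} of $V(\gamma)-1$, not by size. The part with $V(\gamma)\le 1$ is bounded through $1-V(\gamma)^2$ and handled by the Rademacher cutoff (Lemma \ref{lem:1-cycle-cutoff-rademacher}) together with Lemma \ref{lem:prior-dichotomy}. The part with $V(\gamma)>1$ is Lemma \ref{lem:1-cycle-large}, proved by a squeeze. For Gaussian noise, Hermite orthogonality identifies the second moment of the truncated likelihood ratio with $\sum_{\gamma\in\mathcal{B}^4}M^{-m^\gamma}V(\gamma)^2\mathbb{E}[W(\gamma)^2]$. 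Lemma \ref{lem:prior-strong} gives the same limit $(1-\snr^2/\ratio)^{-1/2}$ for the given prior and for the Rademacher prior, so the total excess $\sum(V(\gamma)^2-1)M^{-m^\gamma}\mathbb{E}[W(\gamma)^2]$ is $o(1)$. The $4$-multicycle cutoff (Lemma \ref{lem:4-cycle-cutoff-rademacher}) then removes the $V\le 1$ portion.

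\textbf{How your route differs.} You cut by size and treat bounded-size graphs by direct moment computation and counting. For the tail you use the exact identity $\sum_{\gamma\in\mathcal{B}^1}(\snr/M)^{m^\gamma}V(\gamma)^2=\mathbb{E}[\mathds{1}(\Omega(\U,\V))\mathds{1}(\Omega(\U',\V'))\prod_{\hidx,\widx}(1+w_{\hidx\widx})]$, with $\tau_1'$ replaced by $\snr$ in $w_{\hidx\widx}$. Evaluating at a fixed $\snr'\in(\tau_1,\snr_c\sqrt{\ratio})$ gives geometric decay $(\tau_1'/\snr')^{K}$.

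\textbf{What each buys.} Your argument is more elementary. It needs only an $O(1)$ bound on the overlap exponential moment, which is essentially Condition \ref{cond:prior} itself rather than the exact limit in Lemma \ref{lem:prior-strong}. It bypasses the Gaussian-noise comparison and Lemma \ref{lem:4-cycle-cutoff-rademacher}, and it yields Lemma \ref{lem:1-cycle-cutoff} as a by-product. The paper's route gives the stronger all-size statement $\sum_{V(\gamma)>1}(\tau_1'/M)^{m^\gamma}(V(\gamma)^2-1)=o(1)$ and parallels \cite{Choo-Han-Lee2026}. Its cost is that it needs the exact second-moment limit and the fact that this limit does not depend on the prior.

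\textbf{Small corrections.}
\begin{itemize}
\item The bound $\prod(1+w_{\hidx\widx})\le e^{\sum w_{\hidx\widx}}$ is only an upper bound. The $1$-multicycle sum itself tends to $(1-x)^{-1/2}e^{-x/2}$ with $x=\tau_1^2/\ratio$, so the ``same value'' you cite for the Rademacher term is likewise only a bound.
\item The positivity you actually need is $1+w_{\hidx\widx}>0$ on $\Omega^\epsilon_{\mathcal{U}\times\mathcal{V}}$, which holds because $|w_{\hidx\widx}|=O(N^{-1+4\epsilon})$. You also need coordinatewise symmetry of $\Omega$ for the sign averaging. The termwise comparison between $\tau_1'$ and $\snr'$ involves nonnegative terms automatically, so it raises no positivity issue.
\item The hypothesis $\tau_1<\snr_c\sqrt{\ratio}$ is used only in the tail. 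For fixed $K$ the bounded-size part involves finitely many sizes, including disjoint unions of cycles, and needs no convergent series.
\end{itemize}
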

We first introduce a result on the monotonicity of the second moment of $A_1$ with respect to $\tau_1$, which will be used later.

\begin{lem}\label{lem:1-cycle-monotonicity}
	The variance $\mathbb{E}[A_1^2]$ depends only on $\mathcal{S},\mathcal{T}$ and $\tau'_1$.
	If we consider $\mathbb{E}[A_1^2]$ as a function of $\tau'_1$, $f(\tau'_1)=\mathbb{E}[A_1^2]$, then for any $0<\alpha_1 \le \alpha_2$, we have $f(\alpha_1)\le f(\alpha_2)$.
\end{lem}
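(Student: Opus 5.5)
The plan is to compute $\mathbb{E}[A_1^2]$ explicitly as a sum over pairs of bipartite $1$-multicycles and read off its dependence on the parameters. Write
\[ A_1^2=\sum_{\gamma,\gamma'\in\mathcal{B}^1}\frac{1}{M^{(m^\gamma+m^{\gamma'})/2}}V(\gamma)V(\gamma')W(\gamma)W(\gamma'). \]
For a $1$-multicycle every edge is single, so $W(\gamma)=\prod_{(\hidx,\widx)\in\gamma}P^{(1)}_{\hidx\widx}$. The entries $P^{(1)}_{\hidx\widx}$ are independent across $(\hidx,\widx)$ and centered. Hence $\mathbb{E}[W(\gamma)W(\gamma')]$ vanishes unless every edge of $\gamma\cup\gamma'$ appears in both graphs, that is, unless $\gamma=\gamma'$ as edge sets. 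In that case the expectation is $\prod_{(\hidx,\widx)\in\gamma}\mathbb{E}[(P^{(1)}_{\hidx\widx})^2]=(\tau_1')^{m^\gamma}$. This orthogonality step is the same as in the Wigner case and uses only the first two moments of $P^{(1)}$. It gives
\[ \mathbb{E}[A_1^2]=\sum_{\gamma\in\mathcal{B}^1}\frac{(\tau_1')^{m^\gamma}}{M^{m^\gamma}}V(\gamma)^2. \]

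This formula proves the first claim. The prior factor $V(\gamma)$ is an expectation over $\mathcal{S}\times\mathcal{T}$ restricted to the fixed event $\Omega$ of Definition \ref{defn:omega}. That event is defined through $\U,\V$ and hence through $\boldsymbol s,\boldsymbol t$. Since $\Omega$ is coordinatewise symmetric, $\mathds{1}(\Omega)$ is a function of $(\boldsymbol s,\boldsymbol t)$ alone, and so $V(\gamma)$ does not involve the disorder. The only remaining dependence on the disorder is through $\tau_1'$, and it does not depend on $P^{(2)},P^{(3)},P^{(4)}$ or on higher moments of $P^{(1)}$. Thus $\mathbb{E}[A_1^2]=f(\tau_1')$ with $f$ determined by $\mathcal{S},\mathcal{T}$ (and $M,N$).

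For monotonicity, note that $s_\hidx,t_\widx\ge0$, so $V(\gamma)\ge0$ and each coefficient $M^{-m^\gamma}V(\gamma)^2$ is nonnegative. Therefore $f(x)=\sum_\gamma c_\gamma x^{m^\gamma}$ is a power series in $x>0$ with nonnegative coefficients, and each term is nondecreasing in $x$. For $0<\alpha_1\le\alpha_2$ this gives $f(\alpha_1)\le f(\alpha_2)$ termwise, including the case $f(\alpha_2)=\infty$.

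The only delicate point is justifying that the second moment can be expanded and computed term by term. The sum over $\mathcal{B}^1$ is finite for fixed $M,N$, since there are finitely many simple bipartite graphs on $M+N$ labelled vertices. So no convergence issue arises, and interchanging $\mathbb{E}$ with the sum is immediate. One should also check that the same identity holds after truncating the $P^{(1)}$ to their exact variance $\tau_1'$. It does, because only $\mathbb{E}[(P^{(1)})^2]$ enters.
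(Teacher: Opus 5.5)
Your proposal is correct and is essentially the paper's argument. Orthogonality of distinct bipartite $1$-multicycles, which follows from independence across edges and $\mathbb{E}[P^{(1)}_{\hidx\widx}]=0$, gives $\mathbb{E}[A_1^2]=\sum_{\gamma\in\mathcal{B}^1}(\tau_1'/M)^{m^\gamma}V(\gamma)^2$, and both claims are read off from this nonnegative-coefficient polynomial in $\tau_1'$. Your remark that $\mathcal{B}^1$ is finite for fixed $M,N$ is a small justification that the paper leaves implicit.
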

\begin{proof}
	We use the same orthogonality as in Lemma 5.2 of \cite{Choo-Han-Lee2026}, with the following notation.

	\begin{defn}[Orthogonal bipartite multicycles]\label{defn:orthogonality}
		We say that two bipartite multicycles $\gamma$ and $\gamma'$ are orthogonal if 
		\begin{equation}\label{eq:orthogonality}
			\mathbb{E}[W(\gamma)W(\gamma')]=\mathbb{E}\left[\prod_{\hidx,\widx}P_{\hidx\widx}^{(m_{\hidx\widx}^\gamma)} \prod_{\hidx',\widx'}P_{\hidx'\widx'}^{(m_{\hidx'\widx'}^{\gamma'})}\right]=0
		\end{equation}
	\end{defn}
	Note that the orthogonality depends not only on the bipartite multicycles $\gamma$ and $\gamma'$ but also $P_{\hidx\widx}^{(n)}$.
	Distinct graphs in $\mathcal{B}^1$ have an edge present in only one of them.
	Independence across edges and $\mathbb{E}[P_{\hidx\widx}^{(1)}]=0$ therefore imply their orthogonality.
	Consequently, we find that
	\[ \mathbb{E}[A_1^2]=\sum_{\gamma\in \mathcal{B}^1}\frac{1}{M^{m^\gamma}}V(\gamma)^2\mathbb{E}[W(\gamma)^2]= \sum_{\gamma\in \mathcal{B}^1}\left(\frac{\tau'_1}{M}\right)^{m^\gamma}V(\gamma)^2,\]
	which shows that $\mathbb{E}[A_1^2]$ depends only on $\mathcal{S},\mathcal{T}$, and $\tau_1'$, and also $\mathbb{E}[A_1^2]$ is an increasing function of $\tau_1'$.
\end{proof}
We now return to the proof of Lemma \ref{lem:1-cycle}.
\begin{proof}[Proof of Lemma \ref{lem:1-cycle}]
	Adapting the proof of Lemma \ref{lem:1-cycle-monotonicity}, we find
	\[ \mathbb{E}[(A_1-R_1)^2]=\mathbb{E}\left[\left( \sum_{\gamma\in \mathcal{B}^1}\frac{1}{M^{m^\gamma / 2}}(V(\gamma)-1)W(\gamma) \right)^2\right]
	=\sum_{\gamma\in \mathcal{B}^1}\left( \frac{\tau'_1}{M} \right)^{m^\gamma}(V(\gamma)-1)^2.\]
	We further decompose it as
	\begin{equation}\label{eq:1-cycle-decompose}
		\begin{split}
			\mathbb{E}[(A_1-R_1)^2]
			&= \sum_{\gamma\in \mathcal{B}^1, V(\gamma)\le 1}\left( \frac{\tau'_1}{M} \right)^{m^\gamma}(V(\gamma)-1)^2
			+\sum_{\gamma\in \mathcal{B}^1, V(\gamma)> 1}\left( \frac{\tau'_1}{M} \right)^{m^\gamma}(V(\gamma)-1)^2\\
			&\le  \sum_{\gamma\in \mathcal{B}^1, V(\gamma)\le 1}\left( \frac{\tau'_1}{M} \right)^{m^\gamma}(1-V(\gamma)^2)
			+\sum_{\gamma\in \mathcal{B}^1, V(\gamma)> 1}\left( \frac{\tau'_1}{M} \right)^{m^\gamma}(V(\gamma)^2-1)
		\end{split}
	\end{equation}
	To estimate the first term in the right side of \eqref{eq:1-cycle-decompose}, we consider the cutoff property for Rademacher prior, which is a bipartite version of Lemma 5.4 in \cite{Choo-Han-Lee2026}.
	\begin{lem}[Bipartite 1-multicycle cutoff property for Rademacher prior]\label{lem:1-cycle-cutoff-rademacher}
		If $\alpha <\sqrt{\ratio}$, then for any $\eta > 0$, there exists a constant $s_1$, independent of any other quantities, such that for any $s>s_1$
		\[ \sum_{\gamma\in \mathcal{B}^1, n^\gamma \ge s}\left( \frac{\alpha}{M} \right)^{m^\gamma} \le s\left(\frac{\alpha}{\sqrt{\ratio}}+\eta\right)^s, \]
		for sufficiently large $M,N$.
	\end{lem}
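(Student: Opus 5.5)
We need to bound $\sum_{\gamma\in\mathcal{B}^1, n^\gamma\ge s}(\alpha/M)^{m^\gamma}$. For a simple bipartite graph ($1$-multicycle) we have $m^\gamma=n^\gamma$. Every even-degree simple graph decomposes edge-disjointly into simple cycles, and each cycle in a bipartite graph has even length $2k$ with $k\ge 2$.

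\textbf{Strategy.} We reduce to counting closed structures by their vertex counts and bound with a generating function. The plan is to organize $\gamma$ by its connected components and its cycle structure. Fix $n^\gamma = m$ and let the number of vertices used in $U$ and $V$ be $p$ and $q$. The number of labelings is at most $M^p N^q$. The key combinatorial fact is the Euler-type inequality for an even-degree graph on $p+q$ vertices with $c$ components:
\[
m \ge p+q \quad\text{(every vertex has degree}\ \ge 2\text{)},
\]
with equality exactly when $\gamma$ is a disjoint union of simple cycles.

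\textbf{Main term: disjoint unions of cycles.} A single cycle of length $2k$ uses $k$ vertices of $U$ and $k$ of $V$. The number of such cycles is at most $M^kN^k/(2k)$, so its weight is at most
\[
\frac{1}{2k}\Big(\frac{\alpha^2 N}{M}\Big)^k = \frac{1}{2k}\Big(\frac{\alpha^2}{\ratio}+o(1)\Big)^k .
\]
For a disjoint union of cycles with lengths $2k_1,\dots,2k_j$ and total length $m=2\sum k_i$, the weight is bounded by the product, up to the symmetry factor $1/j!$ for unordered collections. This gives the exponential-generating-function bound
\[
\sum_{\text{unions},\, m\ge s} \le \sum_{m\ge s}\,[z^{m}]\exp\Big(\sum_{k\ge 2}\frac{(xz)^{2k}}{2k}\Big), \qquad x^2=\frac{\alpha^2}{\ratio}+o(1) .
\]
The coefficient $[z^m]$ is at most the corresponding coefficient of $(1-x^2z^2)^{-1/2}$, which is at most $x^m$. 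Summing the tail over $m\ge s$ gives $O(x^s)$. With $\eta$ absorbing the $o(1)$ and the constant, this yields a bound of the form $s(\alpha/\sqrt{\ratio}+\eta)^s$.

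\textbf{Excess graphs.} For graphs with $m>p+q$, I would encode $\gamma$ by a closed walk traversal, or by a spanning structure plus excess edges. The excess $e=m-(p+q)$ costs a factor $M^{-e}$ relative to the count, while the number of shapes with a given excess grows at most like $(Cm)^{2e}$. In the regime $m\le M^{1/3}$, say, each unit of excess contributes $(Cm^2/M)$. Summing over $e\ge1$ then gives a negligible multiple of the main term, and this is valid uniformly for $s\le m\le M^{c}$.

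\textbf{Main obstacle: large $m$.} Here the polynomial shape counting breaks down. The plan is to adapt the argument of Lemma 5.4 in \cite{Choo-Han-Lee2026}. A bipartite simple graph with all degrees even is a subgraph of $K_{M,N}$, and we can bound the total weight without shape counting:
\[
\sum_{\gamma\in\mathcal{B}^1}\prod_{e\in\gamma}\frac{\alpha}{M} = \mathbb{E}_{r,r'}\prod_{\hidx,\widx}\Big(1+\frac{\alpha}{M}r_\hidx r'_\widx\Big)\cdot(\text{parity projection}),
\]
with Rademacher $r,r'$. The left side equals $2^{-(M+N)}\sum_{r,r'}\prod(1+\frac{\alpha}{M}r_\hidx r'_\widx)$. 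Applying $1+x\le e^{x}$ and then a second-moment/Gaussian computation as in Condition \ref{cond:prior} bounds this by $(1-\alpha^2/\ratio)^{-1/2}+o(1)$. Combined with monotonic tilting, i.e. replacing $\alpha$ by $\alpha'=\alpha(1+\eta')$ with $\alpha'<\sqrt{\ratio}$ and using $(\alpha/\alpha')^{m}$ for $m\ge s$, this gives the tail bound $(\alpha/\alpha')^{s}\cdot O(1)$ for all $m$ simultaneously. Choosing $\eta'$ small relative to $\eta$ and taking $s_1$ large, I expect this tilting trick alone may suffice for the whole range, making the shape counting needed only for the sharp rate. The delicate point is matching the exact base $\alpha/\sqrt{\ratio}+\eta$ rather than a worse constant.
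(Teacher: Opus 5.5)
Your final paragraph is the right argument, and it is the same mechanism as the paper's: bound the tail by $(\alpha/\alpha')^{s}\sum_{\gamma\in\mathcal{B}^1}(\alpha'/M)^{n^\gamma}$ for some $\alpha'\ge\alpha$, and control the full sum at $\alpha'$. Once you use it, the main-term/excess split is unnecessary. That is fortunate, since the excess-shape count $(Cm)^{2e}$ is only asserted.

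What remains open is exactly the point you flag, and the choice you suggest goes the wrong way. With $\alpha'=\alpha(1+\eta')$ and $\eta'$ small, the base is $(1+\eta')^{-1}\approx 1$. That is not $\le \alpha/\sqrt{d_0}+\eta$ unless $\alpha/\sqrt{d_0}$ is already close to $1$. The sharp base comes from tilting almost up to the radius of convergence. Choose
\[ \max\Bigl\{\alpha,\ \frac{\alpha}{\alpha/\sqrt{d_0}+\eta}\Bigr\}\le\alpha'<\sqrt{d_0}, \]
which is possible because both entries of the maximum are $<\sqrt{d_0}$. Fix $c\in(\alpha'^2/d_0,1)$, so that $\alpha'^2N/M\le c$ for large $M,N$.

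The full sum is then bounded using four ingredients:
your identity $\sum_{\gamma\in\mathcal{B}^1}(\alpha'/M)^{m^\gamma}=\mathbb{E}_{r,r'}\prod_{i,j}\bigl(1+\frac{\alpha'}{M}r_ir'_j\bigr)$ (the average over $r,r'$ is itself the parity projection); the inequality $0\le 1+x\le e^x$; Hoeffding's bound in $r'$; and $\mathbb{E}_r e^{tS^2/M}=\mathbb{E}_g\mathbb{E}_r e^{\sqrt{2t}\,gS/\sqrt{M}}\le(1-2t)^{-1/2}$, with $S=\sum_i r_i$ and $2t=\alpha'^2N/M$.
Together these give the non-asymptotic bound $(1-c)^{-1/2}$, with no CLT or uniform integrability needed. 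Hence
\[ \sum_{\gamma\in\mathcal{B}^1,\,n^\gamma\ge s}\Bigl(\frac{\alpha}{M}\Bigr)^{m^\gamma}\le(1-c)^{-1/2}\Bigl(\frac{\alpha}{\alpha'}\Bigr)^{s}\le(1-c)^{-1/2}\Bigl(\frac{\alpha}{\sqrt{d_0}}+\eta\Bigr)^{s}, \]
and the constant is absorbed by the prefactor $s$ for $s\ge s_1:=(1-c)^{-1/2}$.

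The only real differences from the paper are how the full sum is bounded and how the tilt is chosen. The paper decomposes each $\gamma\in\mathcal{B}^1$ into edge-disjoint simple cycles. It then dominates the full sum by $\prod_{\gamma'\in\mathcal{B}_S}\bigl(1+(e^\delta\alpha/M)^{n^{\gamma'}}\bigr)\le\exp\bigl(\sum_{\gamma'\in\mathcal{B}_S}(e^\delta\alpha/M)^{n^{\gamma'}}\bigr)$. Counting simple cycles of length $2k$ as $M(M-1)\cdots(M-k+1)N(N-1)\cdots(N-k+1)/(2k)$ gives $(1-x)^{-1/2}e^{-x/2}$ with $x=(1/d_0+\epsilon)e^{2\delta}\alpha^2$. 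The paper then takes the $s$-dependent tilt $x=1-\log s/(2s)$, so the blow-up $(1-x)^{-1/2}$ is polynomial in $s$; this is what the prefactor $s$ in the statement pays for.

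Your Rademacher representation replaces the cycle decomposition and cycle counting by a short sub-Gaussian computation. It is the quantity in Condition~\ref{cond:prior} for Rademacher priors, with $w_c=\alpha'\sqrt{N/M}$. A fixed tilt suffices because the stated prefactor $s$ is generous. The paper's optimized tilt gives a slightly better prefactor, but nothing the lemma needs.
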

	We prove Lemma \ref{lem:1-cycle-cutoff-rademacher} in Section \ref{sec:technical}.
	Since $\tau'_1 = \tau_1 + o(1)$ and $\tau_1 < \snr_c\sqrt{\ratio} \le \sqrt{\ratio}$, for sufficiently large $M, N$, we have $\tau'_1 < \sqrt{\ratio}$.
	By Lemma \ref{lem:1-cycle-cutoff-rademacher}, taking $\eta>0$ small enough that $\eta + \tau_1'/\sqrt{\ratio} <1$, we notice that for any fixed $\epsilon$, there exists ($M,N$-independent) constant $K_\epsilon \equiv K_\epsilon(\tau_1)$ such that for any $s\ge K_\epsilon$ and for any sufficiently large $M$, 
	\begin{equation}\label{eq:1-cycle-cutoff-rademacher}
		\sum_{\gamma\in \mathcal{B}^1, n^\gamma\ge s}\left( \frac{\tau'_1}{M} \right)^{m^\gamma} <\epsilon
	\end{equation}
	Since the summand in the left side of \eqref{eq:1-cycle-cutoff-rademacher} is positive,
	\[
		\sum_{\gamma\in \mathcal{B}^1, V(\gamma)\le 1, n^\gamma\ge K_\epsilon}\left( \frac{\tau'_1}{M} \right)^{m^\gamma}V(\gamma)^2
		\le  \sum_{\gamma\in \mathcal{B}^1, V(\gamma)\le 1, n^\gamma\ge K_\epsilon}\left( \frac{\tau'_1}{M} \right)^{m^\gamma} 
		\le  \sum_{\gamma\in \mathcal{B}^1, n^\gamma\ge K_\epsilon}\left( \frac{\tau'_1}{M} \right)^{m^\gamma}<\epsilon,
	\]
	and in particular,
	\begin{equation}\label{eq:1-cycle-small-cutoff}
		\sum_{\gamma\in \mathcal{B}^1, V(\gamma)\le 1, n^\gamma \ge K_\epsilon} \left( \frac{\tau'_1}{M} \right)^{n^\gamma}(1-V(\gamma)^2)<\epsilon.
	\end{equation}

	To control the sum in the case $V(\gamma)\le 1, n^\gamma < K_\epsilon$, we apply the following lemma which is a modified version of Lemma 5.5 in \cite{Choo-Han-Lee2026}:
	\begin{lem}\label{lem:prior-dichotomy}
		Assume that $\mathcal{U}$ or $\mathcal{V}$ is not a Rademacher prior.
		Fix an ($M, N$-independent) constant $s$.
		Then, for any bipartite multicycle $\gamma$ with $n^\gamma<s$,
		\begin{itemize}
			\item if all node of $\gamma$ which are not a Rademacher prior are with degree $2$, then $V(\gamma)\le 1$ and $1-V(\gamma)=o(1)$.
			\item if at least one node of $\gamma$ which is not a Rademacher prior is with degree $4$ or higher, then $V(\gamma)> 1$ for any sufficiently large $M,N$
		\end{itemize}
	\end{lem}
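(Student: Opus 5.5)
Since $\gamma$ has fewer than $s$ edges, it touches at most $2s$ vertices, and $V(\gamma)$ only involves the weights $s_\hidx,t_\widx$ at those vertices. The plan is to compare $V(\gamma)$ with the product of unconditioned moments $\prod_\hidx \mathbb{E}[s_\hidx^{d_\hidx^\gamma}]\prod_\widx\mathbb{E}[t_\widx^{d_\widx^\gamma}]$ and then analyze that product factor by factor.

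\textbf{Removing the indicator and the normalization.} On $\Omega$ we have $\|\U\|=1+O(M^{-1/5})$ and $\|\V\|=1+O(N^{-1/5})$. Hence the normalization $\|\U\|^{\epsilon_u}$, $\|\V\|^{\epsilon_v}$ changes the product of boundedly many weights only by a factor $1+O(N^{-1/5})$. Here $s_\hidx$ and $t_\widx$ are $\sqrt{M}|u_\hidx|/\|\U\|^{\epsilon_u}$ and $\sqrt{N}|v_\widx|/\|\V\|^{\epsilon_v}$. Next I drop $\mathds{1}(\Omega)$. Each weight on $\Omega$ is at most $N^{\epsilon}$ and $\mathbb{P}(\Omega^c)=o(1)$, but a crude bound is not enough. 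I will use H\"older together with the uniformly bounded moments of $\sqrt{M}u_\hidx$ and $\sqrt{N}v_\widx$ from Definition \ref{defn:prior}. This gives $|\mathbb{E}[\mathds{1}(\Omega^c)\prod s^{d}t^{d}]|\le \mathbb{P}(\Omega^c)^{1/2}C_s=o(1)$, uniformly over the finitely many degree patterns with $m^\gamma\le 4s$. Independence across coordinates then yields
\[ V(\gamma)=\prod_{\hidx}\mathbb{E}\big[(\sqrt{M}|u_1|)^{d_\hidx^\gamma}\big]\prod_\widx \mathbb{E}\big[(\sqrt{N}|v_1|)^{d_\widx^\gamma}\big]+o(1). \]

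\textbf{Factor-by-factor dichotomy.} For a Rademacher node every factor equals $1$. For a non-Rademacher node of degree $2$ the factor is $1$ by the variance normalization. For a non-Rademacher node of degree $d\ge4$, Jensen gives $\mathbb{E}[X^d]\ge(\mathbb{E}X^2)^{d/2}=1$, with equality only if $X^2\equiv1$. So the factor is a fixed constant $c_d>1$ that does not depend on $M,N$, since the law of $\sqrt{M}u_1$ is $M$-independent. In the first case the main term is exactly $1$, so $1-V(\gamma)=o(1)$. In the second case the main term is at least $\min_d c_d>1$ with $d\le 4s$ even, and the $o(1)$ error cannot close this gap, so $V(\gamma)>1$ for large $M,N$.

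\textbf{The obstacle: the sign of $V(\gamma)-1$ in the first case.} The claim $V(\gamma)\le1$ needs more than an $o(1)$ estimate. I would argue directly. If all non-Rademacher nodes have degree $2$, the integrand without the indicator has expectation $\prod\mathbb{E}[s_\hidx^2]\prod\mathbb{E}[t_\widx^2]$, computed in the unnormalized variables. Inserting $\mathds{1}(\Omega)$ with a nonnegative integrand only decreases this. The normalization is the delicate part, and I would handle it separately in the two cases.

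For i.i.d.\ coordinates ($\epsilon=0$) this is immediate. For the normalized case I would use exchangeability: on the symmetric event, $\mathbb{E}[\mathds{1}(\Omega)\,Mu_\hidx^2/\|\U\|^2]=\mathbb{E}[\mathds{1}(\Omega)\sum_\hidx u_\hidx^2/\|\U\|^2]=\mathbb{P}(\Omega)\le1$. For products over several distinct degree-$2$ nodes, I would likewise compare against the symmetrized quantity $\sum_{\text{distinct}}\prod u^2/\|\U\|^{2k}\le1$. If the event $\Omega$ is not permutation invariant, I would replace it by its symmetrization, which is still high-probability. I expect this sign control, rather than the asymptotics, to be the main technical point.
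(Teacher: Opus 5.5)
Your asymptotic analysis is correct. The normalization costs a factor $1+O(N^{-1/5})$ on $\Omega$. The Cauchy--Schwarz removal of $\mathds{1}(\Omega^c)$ is uniform over the bounded degree patterns. Strict Jensen, with constants $c_d>1$ independent of $M$, then gives $1-V(\gamma)=o(1)$ in the first case and $V(\gamma)>1$ in the second. The paper itself gives no argument here beyond citing Lemma 5.5 of \cite{Choo-Han-Lee2026}.

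The gap is the exact inequality $V(\gamma)\le 1$ in the first case, when the non-Rademacher side is normalized and $\gamma$ has $k\ge 2$ degree-$2$ vertices on that side. The comparison you propose does not produce it. By exchangeability, for distinct $\hidx_1,\dots,\hidx_k$,
\[
\mathbb{E}\Bigl[\prod_{i=1}^{k}\frac{M u_{\hidx_i}^2}{\|\U\|^{2}}\Bigr]
=\frac{M^k}{M(M-1)\cdots(M-k+1)}\,\mathbb{E}\Bigl[\sum_{\text{distinct }\hidx'_1,\dots,\hidx'_k}\prod_{i=1}^{k}\frac{u_{\hidx'_i}^2}{\|\U\|^{2}}\Bigr].
\]
Bounding the symmetrized sum by $1$ therefore only gives $V(\gamma)\le 1+\binom{k}{2}M^{-1}+\cdots$, which is strictly above $1$. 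This matters because the lemma asserts the exact inequality. The paper's split into $V(\gamma)\le 1$ and $V(\gamma)>1$ (in the proofs of Lemmas \ref{lem:1-cycle} and \ref{lem:2-cycle}) uses its contrapositive: every bounded graph with $V(\gamma)>1$ must have a non-Rademacher vertex of degree at least $4$.

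The missing ingredient is Maclaurin's inequality. First drop $\mathds{1}(\Omega)$ by nonnegativity of the integrand. This also makes your proposed symmetrization of $\Omega$ unnecessary, which is fortunate, since $\Omega$ is fixed in the definition of $V(\gamma)$ and cannot be replaced. Then factor into the $\mathcal{U}$- and $\mathcal{V}$-parts. On a normalized side, set $x_\hidx=u_\hidx^2$ and use exchangeability together with
\[
\frac{e_k(x)}{\binom{M}{k}}\le\Bigl(\frac{e_1(x)}{M}\Bigr)^{k},
\]
where $e_k$ is the $k$-th elementary symmetric polynomial. This gives
\[
\mathbb{E}\Bigl[\prod_{i=1}^{k}\frac{Mu_{\hidx_i}^2}{\|\U\|^{2}}\Bigr]=M^k\,\mathbb{E}\Bigl[\frac{e_k(x)}{\binom{M}{k}\|\U\|^{2k}}\Bigr]\le\mathbb{P}(\|\U\|\neq 0)\le 1.
\]
With this replacement, and the trivial i.i.d. and Rademacher factors, the rest of your argument goes through.
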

	Lemma \ref{lem:prior-dichotomy} is a direct consequence of Lemma 5.5 in \cite{Choo-Han-Lee2026}.
	We omit the detail.
	Main difference between the above lemma and Lemma 5.5 in \cite{Choo-Han-Lee2026} is that since we are dealing with bipartite graph, there is a possibility that one prior is Rademacher and the other is not.
	If it is the case, then the degree of nodes of Rademacher prior does not change $V(\gamma)$, so we can only infer the characteristic of non-Rademacher node from $V(\gamma)$.
	From Lemma \ref{lem:prior-dichotomy}, we find that
	\begin{equation}\label{eq:1-cycle-small-finite}
		\sum_{\gamma\in \mathcal{B}^1, V(\gamma)\le 1, n^\gamma < K_\epsilon}\left( \frac{\tau'_1}{M} \right)^{n^\gamma}(1-V(\gamma)^2)=o(1),
	\end{equation}
	since the number of bipartite $1$-multicycles with the length $n^\gamma$ whose all nodes are with degree $2$ is $O(N^{n^\gamma})$.
	If there exists a node with degree $4$ or more because of Rademacher prior, then the number of such bipartite $1$-multicycles with the length $n^\gamma$ is $O(N^{n^\gamma-1})$.
	Since we only consider graphs with finite lenght, we conclude that \eqref{eq:1-cycle-small-finite} holds. 

	To estimate the second term in the right side of \eqref{eq:1-cycle-decompose}, we apply the following lemma:
	\begin{lem}\label{lem:1-cycle-large}
		If $\tau_1<\snr_c\sqrt{\ratio}$,
		\begin{equation}\label{eq:1-cycle-large}
			\sum_{\gamma\in \mathcal{B}^1, V(\gamma)>1}\left( \frac{\tau'_1}{M} \right)^{n^\gamma}(V(\gamma)^2-1)=o(1).
		\end{equation}
	\end{lem}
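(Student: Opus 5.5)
We compare the full second moment $\sum_{\gamma\in\mathcal{B}^1}(\tau_1'/M)^{m^\gamma}V(\gamma)^2$ with its Rademacher counterpart. The plan rests on the identity
\[
\sum_{\gamma\in \mathcal{B}^1}\Bigl(\frac{\alpha}{M}\Bigr)^{m^\gamma}V(\gamma)^2 \;=\; \mathbb{E}_{\mathcal{S}\times\mathcal{T}}\mathbb{E}_{\mathcal{S}'\times\mathcal{T}'}\Bigl[\mathds{1}(\Omega)\mathds{1}(\Omega')\sum_{\gamma\in\mathcal{B}^1}\Bigl(\frac{\alpha}{M}\Bigr)^{m^\gamma}\prod_{\hidx} (s_\hidx s'_\hidx)^{d^\gamma_\hidx}\prod_\widx (t_\widx t'_\widx)^{d^\gamma_\widx}\Bigr].
\]
Reinserting independent Rademacher signs, the inner sum over bipartite $1$-multicycles is the expectation over signs of $\prod_{\hidx,\widx}\bigl(1+\tfrac{\alpha}{M}\, r_\hidx s_\hidx s'_\hidx\, r'_\widx t_\widx t'_\widx\bigr)$. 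This is the sign-averaged, replicated product, i.e.\ essentially $\exp$ of the overlap quantity $\tfrac{\alpha}{M}\sum_{\hidx,\widx}(\cdots)$ up to a quadratic correction of order $\alpha^2 M^{-2}\sum s^2 s'^2 t^2 t'^2 = O(1)$.

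\textbf{Step 1 (two-replica expression).} Using $\log(1+x)=x-x^2/2+O(x^3)$ on $\Omega$, where all entries are at most $N^{-1/2+\epsilon}$, I get
\[
\sum_{\gamma}\Bigl(\frac{\alpha}{M}\Bigr)^{m^\gamma}V(\gamma)^2=\mathbb{E}\Bigl[\mathds{1}(\Omega)\mathds{1}(\Omega')\exp\Bigl(\frac{N\alpha'\langle\U,\U'\rangle\langle\V,\V'\rangle}{\|\U\|^{\epsilon_u}\|\U'\|^{\epsilon_u}\|\V\|^{\epsilon_v}\|\V'\|^{\epsilon_v}}\Bigr)\Bigr]e^{-c(\alpha)+o(1)},
\]
with $\alpha'=\alpha\sqrt{N/M}\cdot$(normalization). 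Here $c(\alpha)$ is a deterministic constant: on $\Omega''_{M,N}$ the diagonal correction $\frac{\alpha^2}{2M^2}\sum s_\hidx^2s'^2_\hidx\sum t^2_\widx t'^2_\widx$ concentrates, which is why $\Omega''$ controls $\sum u^4$ and $\sum v^4$. By Lemma \ref{lem:prior-strong} with $\snr=\tau_1'<\snr_c\sqrt{\ratio}$, the expectation equals $(1-\tau_1^2/\ratio)^{-1/2}+o(1)$. The point is that this limit, together with $c(\tau_1)$, must match the Rademacher value. I will verify this by running the same computation with $s\equiv t\equiv1$, where the Rademacher overlap Gaussian limit gives the same $(1-\tau_1^2/\ratio)^{-1/2}$ and the same correction $c$ at leading order. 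Hence $\sum_\gamma(\tau_1'/M)^{m^\gamma}(V(\gamma)^2-1)=o(1)$.

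\textbf{Step 2 (isolating $V>1$).} I split the difference as
\[
\sum_{V>1}(V^2-1)=\sum_{\gamma}(V^2-1)+\sum_{V\le1}(1-V^2),
\]
with both sums weighted by $(\tau_1'/M)^{m^\gamma}$. The first sum is $o(1)$ by Step 1. The second is $o(1)$ by \eqref{eq:1-cycle-small-cutoff} together with \eqref{eq:1-cycle-small-finite}, letting $\epsilon\to0$. Also, $n^\gamma=m^\gamma$ for $1$-multicycles. Together these give \eqref{eq:1-cycle-large}.

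\textbf{Main obstacle.} The hard part is Step 1: justifying the exchange of the graph sum with the replica expectation and getting the $o(1)$ error on the full sum, not only on bounded-size graphs. For large graphs I intend to use the cutoff Lemma \ref{lem:1-cycle-cutoff-rademacher} for the Rademacher part. For the prior-weighted part, I will use the uniform integrability supplied by Lemma \ref{lem:prior-strong} at a slightly larger $\snr\in(\tau_1,\snr_c\sqrt{\ratio})$, so that tails of order $(\tau_1/\snr)^{s}$ are summable. If the exact cancellation of the constant $c$ proves delicate, I fall back on monotonicity (Lemma \ref{lem:1-cycle-monotonicity}) and compare generating functions in $\alpha$ at two values.
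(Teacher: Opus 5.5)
Your argument is correct in outline, but it takes a different route from the paper.

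\textbf{The paper's route.} The paper never writes the $\mathcal{B}^1$ sum as a replica integral. It takes Gaussian noise as an auxiliary disorder and reads the overlap expectation of Lemma \ref{lem:prior-strong} as the second moment of the likelihood ratio. It expands this by Hermite orthogonality into $\sum_{\gamma\in\mathcal{B}^4}M^{-m^\gamma}V(\gamma)^2\mathbb{E}[W(\gamma)^2]$ and subtracts the Rademacher version. It then disposes of the $V(\gamma)\le 1$ part with an extra $4$-multicycle cutoff (Lemma \ref{lem:4-cycle-cutoff-rademacher}), and only at the end restricts to $\mathcal{B}^1$ by positivity.

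\textbf{Your route.} You use the exact finite identity $\sum_{\gamma\in\mathcal{B}^1}(\alpha/M)^{m^\gamma}V(\gamma)^2=\mathbb{E}[\mathds{1}(\Omega)\mathds{1}(\Omega')\,\mathbb{E}_r\prod_{\hidx,\widx}(1+x_{\hidx\widx})]$, where $x_{\hidx\widx}=\frac{\alpha}{M}r_\hidx r'_\widx s_\hidx s'_\hidx t_\widx t'_\widx$ with independent signs $r_\hidx,r'_\widx$. This stays at the $\mathcal{B}^1$ level. It avoids the Gaussian device, the passage from the full likelihood ratio to its fourth-order truncation, and the $\mathcal{B}^4$ cutoff. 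Your Step 2 correctly reuses \eqref{eq:1-cycle-small-cutoff} and \eqref{eq:1-cycle-small-finite}, which are proved independently of this lemma.

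\textbf{The price, and where your justification slips.} Set $X_n=\sum_{\hidx,\widx}x_{\hidx\widx}$ and $Q_n=\frac{\alpha^2}{2M^2}\sum_\hidx s_\hidx^2s_\hidx'^2\sum_\widx t_\widx^2t_\widx'^2$. Then $\prod(1+x_{\hidx\widx})=e^{X_n-Q_n+O(N^{-1+4\epsilon})}$ rather than $e^{X_n}$, so the statement of Lemma \ref{lem:prior-strong} alone is not enough. You claim $\Omega''_{M,N}$ makes the correction deterministic, but it only bounds $Q_n$ (via Cauchy--Schwarz and the fourth-moment constraints). What actually works is the following:
\begin{enumerate}
\item $Q_n\to\alpha^2/(2\ratio)$ in probability, by the law of large numbers for the cross-replica sums. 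This limit is free of $m_4,n_4$, which is why it matches the Rademacher value $\alpha^2N/(2M)$.
\item $0<e^{-Q_n}\le 1$.
\item $\mathds{1}(\Omega)\mathds{1}(\Omega')e^{X_n}$ is uniformly integrable, by Condition \ref{cond:prior} applied at a fixed $\snr\in(\tau_1,\snr_c\sqrt{\ratio})$. This also absorbs $\tau_1'\neq\tau_1$.
\end{enumerate}

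\textbf{Minor points.} The coupling in the exponent is exactly $\alpha$, not $\alpha\sqrt{N/M}$. The interchange you flag as the main obstacle is trivial, since for fixed $M,N$ the graph sum is finite. No cutoff lemma is needed in Step 1.
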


	We prove Lemma \ref{lem:1-cycle-large} in Section \ref{sec:technical}.
	Combining \eqref{eq:1-cycle-small-cutoff}, \eqref{eq:1-cycle-small-finite} and \eqref{eq:1-cycle-large}, we conclude that Lemma \ref{lem:1-cycle} holds.

\end{proof}

\subsection{Bipartite 2-multicycle universality}

This step adapts \cite[Lemma 5.7]{Choo-Han-Lee2026}.
We give the double-edge counting and the prior-factor estimates explicitly, since they now involve $MN$ possible edges and the moments of both priors.
As the second step, we prove the following estimate on bipartite $2$-multicycles, which extend Lemma \ref{lem:1-cycle}.
\begin{lem}\label{lem:2-cycle}
	Recall the definition of $S$ and $T$ in Definition \ref{defn:prior}.
	Define
	\begin{equation}\label{eq:2-cycle}
		A_2 := \sum_{\gamma\in \mathcal{B}^2}\frac{1}{M^{m^\gamma / 2}}V(\gamma)W(\gamma)
	\end{equation}
	and
	\begin{equation}\label{eq:2-cycle-rademacher}
		R_2 := \sum_{\gamma\in \mathcal{B}^2}\frac{1}{M^{m^\gamma / 2}}W(\gamma)
	\end{equation}
	Then, $\mathbb{E}[(A_2-R_2)^2]=o(1)$.
\end{lem}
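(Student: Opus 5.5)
The plan is to reduce Lemma \ref{lem:2-cycle} to Lemma \ref{lem:1-cycle} by using the structure of double edges. Every $\gamma\in\mathcal{B}^2$ decomposes uniquely into its single-edge part $\gamma_1$, which is again a bipartite multigraph, and its double-edge part $\gamma_2$, which is a simple bipartite graph with doubled multiplicities. Because every double edge adds $2$ to the degree of each endpoint, parity is unaffected, so $\gamma\in\mathcal{B}^2$ exactly when $\gamma_1\in\mathcal{B}^1$ and $\gamma_2$ is arbitrary on the edges not used by $\gamma_1$. Using $W(\gamma)=W(\gamma_1)W(\gamma_2)$ and $m^\gamma=m^{\gamma_1}+2n_2^{\gamma}$, one gets
\[ A_2-R_2=\sum_{\gamma\in\mathcal{B}^2}\frac{1}{M^{m^\gamma/2}}(V(\gamma)-1)W(\gamma). \]
Next I would check orthogonality. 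Since $\mathbb{E}[P^{(1)}]=\mathbb{E}[P^{(2)}]=0$, $\mathbb{E}[P^{(1)}P^{(2)}]=0$ and the edges are independent, two distinct graphs in $\mathcal{B}^2$ are orthogonal in the sense of Definition \ref{defn:orthogonality}. This gives
\[ \mathbb{E}[(A_2-R_2)^2]=\sum_{\gamma\in\mathcal{B}^2}\Big(\frac{\tau_1'}{M}\Big)^{n_1^\gamma}\Big(\frac{\tau_2'}{M^2}\Big)^{n_2^\gamma}(V(\gamma)-1)^2 .\]

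The term with $n_2^\gamma=0$ is exactly the quantity already handled in Lemma \ref{lem:1-cycle}, so it is $o(1)$. For $n_2^\gamma=k\ge1$, I fix $\gamma_1$ and sum over placements of the $k$ double edges. There are at most $(MN)^k/k!$ placements, and each carries weight $(\tau_2'/M^2)^k$, so the count alone gives a factor $(\tau_2' N/M)^k/k!=O(1)^k/k!$. This is summable, but it is not small. Smallness must therefore come from $V(\gamma)-1$.

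The key step is the prior-factor estimate. Suppose a double edge is attached at $(\hidx,\widx)$ and neither endpoint lies in $\gamma_1$. Then $s_\hidx^2 t_\widx^2$ enters $V$ through the normalizations $\|\U\|$, $\|\V\|$ and the event $\Omega$. On $\Omega$ (Definition \ref{defn:omega}) these are controlled with errors $M^{-1/5}$, and by exchangeability the average of $s_\hidx^2$ over $\hidx$ is $1+O(M^{-1/5})$. For the generic configuration, in which all double edges are vertex-disjoint and disjoint from $\gamma_1$, I expect
\[ V(\gamma)=V(\gamma_1)\big(1+O(kM^{-1/5})\big) \]
in an averaged sense. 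When the endpoints coincide with other edges, degrees of $4$ or higher appear, and the moment bounds on $\Omega$ then give $V(\gamma)\le C^{d}V(\gamma_1)$. These configurations lose a factor of $N$ in counting, which beats the constant. Writing
\[ (V(\gamma)-1)^2\le 2(V(\gamma)-V(\gamma_1))^2+2(V(\gamma_1)-1)^2, \]
the first part is $o(1)$ after summing with the $1/k!$ bound. The second part is bounded by $e^{O(1)}\,\mathbb{E}[(A_1-R_1)^2]$ restricted appropriately, which is $o(1)$ by Lemma \ref{lem:1-cycle}. The large $\gamma_1$ are controlled by Lemma \ref{lem:1-cycle-cutoff-rademacher} and Lemma \ref{lem:1-cycle-large}.

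I expect the main obstacle to be making the ``averaged'' replacement rigorous. Individual $V(\gamma)-V(\gamma_1)$ need not be small: $V$ is an expectation of $\prod s_\hidx^{d_\hidx}$, and for a non-Rademacher prior, adding $s_\hidx^2$ at a fresh vertex changes $V$ by an $O(1)$ amount in correlation with other vertices. Smallness only appears after summing over positions. The first thing I would try is to use the factor $(\tau_2'/M^2)^k$ and rewrite the sum over double-edge positions in terms of
\[ \sum_{\hidx,\widx} s_\hidx^2 t_\widx^2/(MN)=\|\U\|^{2-2\epsilon_u}\|\V\|^{2-2\epsilon_v}+\ldots , \]
which is pinned to $1+O(N^{-1/5})$ on $\Omega$. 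Since the cross terms in $(V-1)^2$ are not linear, I would expand the square and handle $\sum V(\gamma)^2$ via a second-moment identity analogous to Lemma \ref{lem:prior-strong}, with an extra $\exp(\tau_2'\,\cdot)$ factor. I would then compare it to the Rademacher value, mirroring \cite[Lemma 5.7]{Choo-Han-Lee2026}.
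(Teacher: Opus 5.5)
Your orthogonality identity is correct, and the split $(V(\gamma)-1)^2\le 2(V(\gamma)-V(\gamma_1))^2+2(V(\gamma_1)-1)^2$ is a good idea. Summing the second piece over all double-edge placements costs at most $(1+\tau_2'/M^2)^{MN}\le e^{\tau_2'N/M}$, so that piece is $O(1)\cdot\mathbb{E}[(A_1-R_1)^2]=o(1)$ by Lemma \ref{lem:1-cycle}, with no cutoff needed.

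The gap is in the first piece. For graphs of bounded size, your generic/degenerate split can be made to work. In fact a fresh double edge changes $V$ only by a factor $1+O(M^{-1/5})$, by exchangeability and the pinning of $\|\U\|,\|\V\|$ on $\Omega$, so the ``averaged sense'' caveat is not the real difficulty. What is missing is uniform control of large graphs, which you need before you can reduce to bounded size. The claim that moment bounds on $\Omega$ give $V(\gamma)\le C^dV(\gamma_1)$ does not supply it. Pointwise on $\Omega$ you only know $s_\hidx,t_\widx\le K=2N^{1/60}$. Attaching a double edge at a vertex of large degree $d$ in $\gamma_1$ multiplies $V$ by roughly $\mathbb{E}[s^{d+2}\cdots]/\mathbb{E}[s^{d}\cdots]$. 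For Gaussian entries, say, this grows with $d$ and is controlled only by $K^2$ per endpoint, so there is no per-edge constant independent of $\gamma_1$. Likewise, ``degenerate placements lose a factor $N$'' becomes a loss of only $M/v$ when $\gamma_1$ has $v$ vertices. Lemmas \ref{lem:1-cycle-cutoff-rademacher} and \ref{lem:1-cycle-large} are statements about $\mathcal{B}^1$. You cannot invoke them until the sum of $V(\gamma)^2$ over all double-edge decorations of $\gamma_1$ is bounded by a constant times $V(\gamma_1)^2$.

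The missing idea is the Cauchy--Schwarz step used in the paper. If $\gamma'$ is $\gamma$ plus a double edge at $(\hidx,\widx)$, then $V(\gamma')^2\le \mathbb{E}_{\mathcal{S}\times\mathcal{T}}[\mathds{1}(\Omega)s_\hidx^4t_\widx^4\prod_k s_k^{d_k^\gamma}\prod_\kappa t_\kappa^{d_\kappa^\gamma}]\,V(\gamma)$. Summing over all $(\hidx,\widx)$ brings in $\sum_\hidx s_\hidx^4\sum_\widx t_\widx^4\le MNm_4n_4(1+o(1))$ on $\Omega$; this is why $\Omega$ pins the fourth moments and not just the norms. The result is a factor $2m_4n_4\tau_2'N/M$ per added double edge, uniformly in $\gamma$. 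Iterating, with the $1/r!$ from overcounting, yields $\sum_{\gamma\in\mathcal{B}^2,\,n_1^\gamma\ge s}(\tau_1'/M)^{n_1^\gamma}(\tau_2'/M^2)^{n_2^\gamma}V(\gamma)^2\le e^{4m_4n_4\tau_2/\ratio}\sum_{\gamma'\in\mathcal{B}^1,\,n^{\gamma'}\ge s}(\tau_1'/M)^{n^{\gamma'}}V(\gamma')^2$. This is small by Lemma \ref{lem:1-cycle-cutoff}, and the tail in $n_2^\gamma$ is the tail of an exponential series. Only then does a bounded-size analysis finish the proof; the paper uses Lemma \ref{lem:prior-dichotomy}, splitting $V(\gamma)\le 1$ and $V(\gamma)>1$.

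Your fallback of a replica second-moment identity could in principle give a different route, but as sketched it only addresses $\sum V(\gamma)^2$. After expanding the square you would also need the limit of the cross term $\sum(\tau_1'/M)^{n_1^\gamma}(\tau_2'/M^2)^{n_2^\gamma}V(\gamma)$. That is a mixed prior/Rademacher replica computation with its own uniform-integrability requirement, and none of it is carried out.
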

\begin{proof}
	Recall that $n_1^\gamma$ is the number of the single edges in $\gamma$ and $n_2^\gamma$ the number of the double edges, and in particular, $\mathbb{E}[W(\gamma)^2]=(\tau'_1)^{n_1^\gamma}(\tau'_2)^{n_2^\gamma}.$
	If $\gamma$ and $\gamma'$ in $\mathcal{B}^2$ are distinct, then they are orthogonal since we are assuming $\mathbb{E}[P_{\hidx\widx}^{(1)}P_{\hidx\widx}^{(2)}]=0$.
	Thus, from $m^\gamma = n_1^\gamma +2n_2^\gamma$,
	\begin{equation}\label{eq:2-cycle-decompose}
		\mathbb{E}[(A_2-R_2)^2]=\sum_{\gamma\in \mathcal{B}^2}\left( \frac{1}{M} \right)^{m^\gamma}(\tau'_1)^{n_1^\gamma}(\tau'_2)^{n_2^\gamma}(V(\gamma)-1)^2
		=\sum_{\gamma\in \mathcal{B}^2}\left(\frac{\tau'_1}{M} \right)^{n^\gamma_1}\left(\frac{\tau'_2}{M^2}\right)^{n_2^\gamma}(V(\gamma)-1)^2. 
	\end{equation}
	We first prove the cutoff property for the right side of \eqref{eq:2-cycle-decompose}, which asserts that in this sum the contribution from large graphs is negligible.
	To estimate the contribution from large graphs, we will use the fact that $(V(\gamma)-1)^2 \le V(\gamma)^2+1$.

	For $\gamma\in \mathcal{B}^2$, let $\gamma^+$ be the set of bipartite $2$-multicycles that can be obtained by adding a double edge to $\gamma$.
	Then, since connecting vertices $\hidx$ and $\widx$ by a double edge increases the degrees $d_\hidx^\gamma$ and $d_\widx^\gamma$ by $2$, respectively, and remain other degrees unchanged,
	\begin{align*}
		\sum_{\gamma'\in \gamma^+}\left( \frac{\tau'_1}{M} \right)^{n_1^{\gamma'}}\left( \frac{\tau'_2}{M^2}\right)^{n_2^{\gamma'}} 
		&V(\gamma')^2 \\
		\le \left( \frac{\tau'_1}{M} \right)^{n_1^{\gamma}}\left( \frac{\tau'_2}{M^2}\right)^{n_2^{\gamma}+1} 
		&\sum_{\hidx,\widx}
		\left( \mathbb{E}_{\mathcal{S}\times \mathcal{T}}\left[\mathds{1}(\Omega)s_\hidx^2t_\widx^2\left(\prod_{k=1}^{M}s_k^{d_k^\gamma}\right)\left(\prod_{\kappa=1}^{N}t_\kappa^{d_\kappa^\gamma}\right)\right] \right)^2 \\
		\le \left( \frac{\tau'_1}{M} \right)^{n_1^{\gamma}}\left( \frac{\tau'_2}{M^2}\right)^{n_2^{\gamma}+1} 
		&\sum_{\hidx,\widx}
		\left( \mathbb{E}_{\mathcal{S}\times \mathcal{T}}\left[\mathds{1}(\Omega)s_\hidx^4t_\widx^4\left(\prod_{k=1}^{M}s_k^{d_k^\gamma}\right)\left(\prod_{\kappa=1}^{N}t_\kappa^{d_\kappa^\gamma}\right) \right] \right)\\
		&\qquad\left( \mathbb{E}_{\mathcal{S}\times \mathcal{T}}\left[\mathds{1}(\Omega)\left(\prod_{k=1}^Ms_k^{d_k^\gamma}\right)\left(\prod_{\kappa=1}^{N}t_\kappa^{d_\kappa^\gamma}\right)\right] \right)\\
		= \left( \frac{\tau'_1}{M} \right)^{n_1^{\gamma}}\left( \frac{\tau'_2}{M^2}\right)^{n_2^{\gamma}+1} 
		&\left( \mathbb{E}_{\mathcal{S}\times \mathcal{T}}\left[\mathds{1}(\Omega)\left(\sum_\hidx s_\hidx^4\right)\left(\sum_\widx t_\widx^4\right)\left(\prod_{k=1}^{M}s_k^{d_k^\gamma}\right)\left(\prod_{\kappa=1}^{N}t_\kappa^{d_\kappa^\gamma}\right) \right] \right)\\
		&\left( \mathbb{E}_{\mathcal{S}\times \mathcal{T}}\left[\mathds{1}(\Omega)\left(\prod_{k=1}^Ms_k^{d_k^\gamma}\right)\left(\prod_{\kappa=1}^{N}t_\kappa^{d_\kappa^\gamma}\right)\right] \right)
	\end{align*}

	From the definition of $\Omega$, we have
	\begin{align*}
		&\mathbb{E}_{\mathcal{S}\times \mathcal{T}}\left[\mathds{1}(\Omega)\left( \sum_\hidx s_\hidx^4 \right)\left( \sum_\widx t_\widx^4 \right)\left(\prod_{k=1}^M s_k^{d_k^\gamma}\right)\left(\prod_{\kappa=1}^N t_\kappa^{d_\kappa^\gamma}\right)\right] \\
		&\le MNm_4n_4(1+M^{-1 / 5})(1+N^{-1 /5})\;\mathbb{E}_{\mathcal{S}\times \mathcal{T}}\left[\mathds{1}(\Omega)\left(\prod_{k=1}^M s_k^{d_k^\gamma}\right)\left(\prod_{\kappa=1}^N t_\kappa^{d_\kappa^\gamma}\right)\right].
	\end{align*}
	We now have
	\begin{equation}\label{eq:2-cycle-add-edge}
		\sum_{\gamma'\in \gamma^+}\left( \frac{\tau'_1}{M} \right)^{n_1^{\gamma'}}\left( \frac{\tau'_2}{M^2}\right)^{n_2^{\gamma'}} V(\gamma')^2 \le  \left( 2m_4n_4 \tau'_2 \frac{N}{M}\right) \left( \frac{\tau'_1}{M} \right)^{n_1^\gamma}\left( \frac{\tau'_2}{M^2} \right)^{n_2^\gamma} V(\gamma)^2
	\end{equation}

	Applying \eqref{eq:2-cycle-add-edge} inductively, if we let $\gamma^{+r}$ be the set of $2$-multicycles that can be obtained by adding $r$ double edges to $\gamma$, then
	\[ \sum_{\gamma'\in \gamma^{+r}}\left( \frac{\tau'_1}{M}\right)^{n_1^{\gamma'}}\left( \frac{\tau'_2}{M^2} \right)^{n_2^{\gamma'}}V(\gamma')^2 
	\le \frac{(2m_4n_4\tau'_2 N/ M)^r}{r!}\left(\frac{\tau'_1}{M} \right)^{n_1^\gamma} \left(\frac{\tau'_2}{M^2} \right)^{n^\gamma_2}V(\gamma)^2.\]
	where the factor $1 / (r!)$ is due to the fact that $r$ double edges added in the inductive step were counted $(r!)$-times.
	Now we take sufficiently large $M,N$ such that $\tau_2'N/ M < 2\tau_2/\ratio$, since $\tau_2' = \tau_2+o(1)$.
	Summing it over $r$, we get
	\begin{equation}\label{eq:2-cycle-1-edge-cutoff}
		\begin{split}
			\sum_{\gamma\in \mathcal{B}^2, n_1^\gamma\ge s}\left( \frac{\tau'_1}{M} \right)^{n_1^\gamma}\left( \frac{\tau'_2}{M^2} \right)^{n_2^\gamma}V(\gamma)^2
			&\le \sum_{\gamma'\in \mathcal{B}^1,n^{\gamma'}\ge s}\left( \frac{\tau'_1}{M} \right)^{n^{\gamma'}}V(\gamma')^2\sum_{r=0}^{\infty}\frac{(4m_4n_4\tau_2 / \ratio)^r}{r!} \\
			&= e^{4m_4n_4\tau_2 / \ratio}\sum_{\gamma'\in \mathcal{B}^1, n^{\gamma'}\ge s}\left( \frac{\tau'_1}{M} \right)^{n^{\gamma'}}V(\gamma')^2.
		\end{split}
	\end{equation}
	To estimate the right side of \eqref{eq:2-cycle-1-edge-cutoff}, we need a cutoff property that generalizes Lemma \ref{lem:1-cycle-cutoff-rademacher}.

	\begin{lem}\label{lem:1-cycle-cutoff}
		If $\tau'_1 < \snr_c \sqrt{\ratio}$, then for any fixed $\epsilon > 0$, there exists ($M,N$-independent) constant $K'_\epsilon \equiv K'_\epsilon(\tau'_1)$ such that for any $s\ge K'_\epsilon$ and for any sufficiently large $M,N$,
		\[ \sum_{\gamma\in \mathcal{B}^1, n^\gamma \ge s}\left( \frac{\tau'_1}{M} \right)^{n^\gamma}V(\gamma)^2 <\epsilon.\]
	\end{lem}

	We prove Lemma \ref{lem:1-cycle-cutoff} in Section \ref{sec:technical}.
	By Lemma \ref{lem:1-cycle-cutoff}, we conclude from \eqref{eq:2-cycle-1-edge-cutoff} that
	\[ \sum_{\gamma\in\mathcal{B}^2, n_1^\gamma\ge s}\left( \frac{\tau'_1}{M} \right)^{n_1^\gamma}\left( \frac{\tau'_2}{M^2}\right)^{n_2^\gamma} V(\gamma)^2< e^{4m_4n_4\tau_2 /\ratio}\epsilon,\]
	and by considering the Rademacher prior,
	\[ \sum_{\gamma\in\mathcal{B}^2, n_1^\gamma\ge s}\left( \frac{\tau'_1}{M} \right)^{n_1^\gamma}\left( \frac{\tau'_2}{M^2}\right)^{n_2^\gamma}< e^{4m_4n_4\tau_2 /\ratio}\epsilon,\]
	This shows that when we prove that the right side of \eqref{eq:2-cycle-decompose} is $o(1)$, we only need to consider bipartite $2$-multicycles $\gamma$ such that $n_1^\gamma <K'_\epsilon$.

	Similarly, we can find a constant $\tilde K_\epsilon$ such that for any $r_0 \ge \tilde K_\epsilon$,
	\begin{equation}\label{eq:2-cycle-2-edge-cutoff}
		\begin{split}
			&\sum_{\gamma\in \mathcal{B}^2, n_1^\gamma<K'_\epsilon, n_2^\gamma\ge r_0}\left( \frac{\tau'_1}{M} \right)^{n_1^\gamma}\left( \frac{\tau'_2}{M^2} \right)^{n_2^\gamma}V(\gamma)^2 \\
			&\le \left( \sum_{\gamma'\in \mathcal{B}^1, n^{\gamma'}<K'_\epsilon}\left( \frac{\tau'_1}{M} \right)^{n^{\gamma'}}V(\gamma')^2 \right)\left( \sum_{r=r_0}^{\infty}\frac{(4m_4n_4\tau_2 / \ratio)^r}{r!} \right) <\epsilon,
		\end{split}
	\end{equation}
	since in the right side of \eqref{eq:2-cycle-2-edge-cutoff}, the first sum is $O(1)$ as $K'_\epsilon$ is a constant, and the second sum is a tail part of the Taylor series of the exponential function.
	Also, by considering the Rademacher prior,
	\[ \sum_{\gamma\in \mathcal{B}^2, n_1^\gamma<K'_\epsilon, n_2^\gamma\ge r_0}\left( \frac{\tau'_1}{M} \right)^{n_1^\gamma}\left( \frac{\tau'_2}{M^2} \right)^{n_2^\gamma} < \epsilon.\]
	This shows that when we prove that the right side of \eqref{eq:2-cycle-decompose} is $o(1)$, we only need to consider bipartite $2$-multicycles $\gamma$ such that $n^\gamma_1 <K'_\epsilon$ and $n_2^\gamma<\tilde K_\epsilon$. 

	It remains to estimate
	\begin{equation}\label{eq:2-cycle-finite}
		\sum_{\gamma\in \mathcal{B}^2, n_1^\gamma<K'_\epsilon, n_2^\gamma<\tilde K_\epsilon} \left( \frac{\tau'_1}{M} \right)^{n_1^\gamma}\left( \frac{\tau'_2}{M^2} \right)^{n_2^\gamma}(V(\gamma)^2-1).
	\end{equation}
	Following the proof of Lemma \ref{lem:1-cycle}, for $\gamma\in \mathcal{B}^2$ with $n_1^\gamma<K'_\epsilon$ and $n_2^\gamma<\tilde K_\epsilon$, we separate the case $V(\gamma)\le 1$ and $V(\gamma)>1$ to find that the sum in \eqref{eq:2-cycle-finite} is $o(1)$.

	If both prior is not a Rademacher prior, we apply the first part of Lemma \ref{lem:prior-dichotomy} for the former.
	For the latter, we apply the second part of Lemma \ref{lem:prior-dichotomy} to find that $\gamma$ has a vertex with degree $4$ or higher.
	Since a bipartite $1$-multicycle with $s_1$ edges has at most $s_1$ vertices with degree $2$ or higher, each double edge connects $2$ vertices, and at least one vertex is with degree $4$ or higher, there are at most $N^{s_1+2s_2-1}$ different ways of choosing the vertices of $\gamma$.
	Since the number of ways of connecting the chosen vertices accordingly is bounded by a constant (depending only on $s_1$ and $s_2$), we find that
	\begin{equation}\label{eq:2-cycle-overlap}
		\sum_{\gamma\in \mathcal{B}^2, n_1^\gamma=s_1<K'_\epsilon, n_2^\gamma=s_2<\tilde K_\epsilon, V(\gamma)>1} \left(\frac{\tau'_1}{M} \right)^{n_1^\gamma}\left( \frac{\tau'_2}{M^2} \right)^{n_2^\gamma}(V(\gamma)^2-1) = O(N^{-1}) .
	\end{equation}
	Summing it over $s_1$ and $s_2$, we find that \eqref{eq:2-cycle-finite} is $o(1)$.

	If both prior is Rademacher prior, then clearly \eqref{eq:2-cycle-finite} is $0$.
	Lastly, if one only prior is Rademacher prior, we separate the case $V(\gamma)\le 1$ and $V(\gamma)>1$ and apply Lemma \ref{lem:prior-dichotomy} similarly.
	For the former, the number of graphs with $s_1$ single edges and $s_2$ double edges is $O(N^{s_1+2s_2})$.
	But since $1-V(\gamma)=o(1)$, $V(\gamma)\le 1$ part of \eqref{eq:2-cycle-overlap} is $o(1)$.
	For the latter, by the second part of Lemma \ref{lem:prior-dichotomy}, there exists at least one node with degree $4$ or higher.
	Following \eqref{eq:2-cycle-overlap}, we see that $V(\gamma)>1$ part of \eqref{eq:2-cycle-overlap} is also $o(1)$ and we finish the proof of the desired lemma.
\end{proof}

\subsection{Bipartite 3-multicycle estimate}
The next estimate adapts the similarity-class argument of Lemma 5.9 in \cite{Choo-Han-Lee2026}. 
\begin{lem}\label{lem:3-cycle}
	Recall the definition of $R_2$ in \eqref{eq:2-cycle-rademacher}.
	Define
	\begin{equation}\label{eq:3-cycle}
		A_3 := \sum_{\gamma\in \mathcal{B}^3} \frac{1}{M^{m^\gamma /2}}V(\gamma)W(\gamma).
	\end{equation}
	Then, $\mathbb{E}[(A_3-R_2)^2]=o(1)$.
\end{lem}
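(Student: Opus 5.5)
By the triangle inequality and Lemma \ref{lem:2-cycle}, it suffices to show $\mathbb{E}[(A_3-A_2)^2]=o(1)$. Here
\[ A_3-A_2=\sum_{\gamma\in \mathcal{B}^3\setminus\mathcal{B}^2}\frac{1}{M^{m^\gamma/2}}V(\gamma)W(\gamma), \]
and every $\gamma$ in this sum has at least one triple edge. For a triple edge $(\hidx,\widx)$, either the remaining part of $\gamma$ has even degree at $\hidx$ and $\widx$, or, since $3$ is odd, the triple edge must be complemented by an odd number of other edges at each endpoint.

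The first step is to decompose $\mathcal{B}^3\setminus\mathcal{B}^2$ into similarity classes, as in Lemma 5.9 of \cite{Choo-Han-Lee2026}. Two graphs are similar if they differ only by replacing triple edges by single edges on the same vertex pairs. The condition $\mathbb{E}[P^{(1)}P^{(2)}]=\mathbb{E}[P^{(2)}P^{(3)}]=0$, together with independence across edges, makes graphs in different classes orthogonal. It also reduces the cross term between $P^{(1)}$ and $P^{(3)}$ on a common edge to the scalar $\mathbb{E}[P^{(1)}P^{(3)}]=O(1)$. Since $\mathbb{E}[P^{(1)}(P^{(2)})^2]=0$ and the centering kills mixed terms, the second moment collapses to
\[ \mathbb{E}[(A_3-A_2)^2]\le C\sum_{\gamma\in\mathcal{B}^3\setminus\mathcal{B}^2}\frac{C^{n_3^\gamma}}{M^{m^\gamma}}(\tau_1')^{n_1^\gamma}(\tau_2')^{n_2^\gamma}V(\gamma)^2 \]
plus off-diagonal terms within a class. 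Those off-diagonal terms are bounded by Cauchy--Schwarz against the diagonal, at the cost of a factor $C^{n_3^\gamma}$.

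The second step is counting. Removing all triple edges from $\gamma$ leaves a graph in which the endpoints of each triple edge have odd degree. So every triple edge costs $M^{-3}$ in weight, while it saves at most one free vertex relative to a single edge, and in the best case adds only two new vertices. Concretely, I would first show the cutoff. Adding a triple edge to a graph $\gamma$ multiplies the weighted sum by at most
\[ C\sum_{\hidx,\widx}M^{-3}\mathbb{E}[\mathds{1}(\Omega)s_\hidx^6t_\widx^6\cdots]/\mathbb{E}[\cdots]\le C M^{-3}\cdot MN\cdot M^{2\epsilon}N^{2\epsilon}=O(N^{-1+4\epsilon}). \]
Here I use the bounds $s_\hidx\le M^{\epsilon}$ and $t_\widx\le N^{\epsilon}$ on $\Omega$, in the same way that \eqref{eq:2-cycle-add-edge} was derived for double edges. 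This holds even when the triple edge attaches to a single-edge skeleton and changes parity, since the vertex sum is still over at most $MN$ pairs. The triple-edge count is therefore geometric in $N^{-1+4\epsilon}$. The remaining single-edge and double-edge skeleton sum is bounded, by the cutoff estimates of Lemma \ref{lem:1-cycle-cutoff} and \eqref{eq:2-cycle-1-edge-cutoff}, uniformly by a constant. The skeleton may be only "near-even", with odd degree at the triple-edge endpoints; in that case I treat it via the $1$-multicycle obtained after pairing each triple edge's residual single-edge parity, which the paper's bipartite path-counting in Section \ref{sec:technical} should handle. Hence the total is $O(N^{-1+4\epsilon})\cdot O(1)=o(1)$, because $\epsilon<1/60$.

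The main obstacle is the parity issue in the last step. When a triple edge is attached, the underlying skeleton is not itself a multicycle, so the previously established cutoff lemmas do not apply verbatim. I would handle this as \cite{Choo-Han-Lee2026} does: bound $V(\gamma)^2$ by Cauchy--Schwarz and relate $\gamma$ to the multicycle obtained by replacing each triple edge by a single edge. That replacement is a multicycle with the same parity, so Lemma \ref{lem:1-cycle-cutoff} applies to it, and the extra $M^{-2}$ per triple edge against the $O(M^{4\epsilon})$ moment loss gives the decay.
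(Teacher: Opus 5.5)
Your outline matches the paper up to the choice of representatives. You reduce to $\mathbb{E}[(A_3-A_2)^2]$, split $\mathcal{B}^3\setminus\mathcal{B}^2$ into similarity classes with a unique representative $\gamma\in\mathcal{B}^2$, use orthogonality across classes, and use $V(\gamma')\le K^{4n_3^{\gamma'}}V(\gamma)$. The gap is in how you sum within a class.

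First, the class of $\gamma$ contains $2^{n_1^\gamma}-1$ graphs, and the off-diagonal terms cannot be charged to the diagonal at a cost $C^{n_3}$. Consider pairs in which a single edge $e$ is tripled in the first graph and a different edge $e'$ in the second. Through $\mathbb{E}[P^{(1)}P^{(3)}]$, which is nonzero unless $x_4=3$, these contribute in total order $(n_1^\gamma)^2M^{-2}$, against the diagonal's $n_1^\gamma M^{-2}$. Set $x:=K^4\sqrt{\tau_3'}/(M\sqrt{\tau_1'})$. The correct within-class bound, from edgewise Cauchy--Schwarz and the binomial theorem, is $(\tau_1'/M)^{n_1^\gamma}(\tau_2'/M^2)^{n_2^\gamma}V(\gamma)^2[(1+x)^{n_1^\gamma}-1]^2\le(\tau_1'/M)^{n_1^\gamma}(\tau_2'/M^2)^{n_2^\gamma}V(\gamma)^2[(1+x)^{2n_1^\gamma}-1]$.

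Second, your count ``$M^{-3}\cdot MN$ per triple edge'' does not describe $\mathcal{B}^3\setminus\mathcal{B}^2$, since a triple edge at a fresh pair breaks parity, as you note. Once you switch to replacement, a triple edge has only $n_1^\gamma$ possible positions and costs $x$ in amplitude. So the triple-edge factor is $(1+x)^{2n_1^\gamma}-1$. This depends on the skeleton and is not uniformly $O(N^{-1+4\epsilon})$: it is exponentially large when $n_1^\gamma\gg M/K^4$, and such multicycles are in the sum. The factorization ``$O(N^{-1+4\epsilon})\cdot O(1)$'' is therefore unjustified. Applying Lemma \ref{lem:1-cycle-cutoff} at $\tau_1'$ to the representative does not close it either, because the weight being summed is no longer $(\tau_1'/M)^{n_1^\gamma}$.

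The missing idea is to absorb the growing factor into the edge weight. Set $\tau_1'':=\tau_1'(1+x)^2$. Since $x\to 0$ and the hypothesis $\tau_1<\snr_c\sqrt{\ratio}$ is strict, $\tau_1''<\snr_c\sqrt{\ratio}$ for large $M,N$. On the part with $n_1^\gamma\ge K'_\epsilon(\tau_1'')$ you have $(\tau_1'/M)^{n_1^\gamma}(1+x)^{2n_1^\gamma}=(\tau_1''/M)^{n_1^\gamma}$. So the $\mathcal{B}^2$ cutoff (Lemma \ref{lem:1-cycle-cutoff} together with \eqref{eq:2-cycle-1-edge-cutoff}), applied at $\tau_1''$, makes that part smaller than $\epsilon$. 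On the complementary part, $(1+x)^{2n_1^\gamma}-1\le CK^4/M$. After also cutting $n_2^\gamma$ via \eqref{eq:2-cycle-2-edge-cutoff}, the remaining weighted sum of $V(\gamma)^2$ is $O(1)$, by the comparison with the Rademacher case from the proof of Lemma \ref{lem:2-cycle}. This gives $\mathbb{E}[(A_3-A_2)^2]\le 2\epsilon+O(K^4/M)$, which is how the paper concludes. Without this step, or an equivalent bound on $\sum_\gamma(\tau_1'/M)^{n_1^\gamma}(\tau_2'/M^2)^{n_2^\gamma}V(\gamma)^2(1+x)^{2n_1^\gamma}$, your argument does not close.
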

\begin{proof}[Proof of Lemma \ref{lem:3-cycle}]
	By Lemma \ref{lem:2-cycle} and
	\[ \mathbb{E}[(A_3-R_2)^2]\le 2\mathbb{E}[(A_3-A_2)^2]+2\mathbb{E}[(A_2-R_2)^2],\]
	it suffices to bound the second moment of
	\begin{equation}
		A_3-A_2=\sum_{\gamma\in \mathcal{B}^3\setminus\mathcal{B}^2}\frac{1}{M^{m^\gamma /2}}V(\gamma)W(\gamma).
	\end{equation}
	We use the equivalence relation of Definition 5.10 in \cite{Choo-Han-Lee2026}, restated in the bipartite notation.
	\begin{defn}[Similarity]\label{defn:similarity}
		We say that two bipartite multicycles $\gamma$ and $\gamma'$ are similar and write $\gamma\sim\gamma'$ if the following holds: for any nodes $\hidx$ and $\widx$, $m_{\hidx\widx}^\gamma>0$ iff $m_{\hidx\widx}^{\gamma'}>0$, and the difference of the multiplicities $|m_{\hidx\widx}^\gamma-m_{\hidx\widx}^{\gamma'}|$ is even.
	\end{defn}

	We notice that different similarity classes are orthogonal since an edge belonging to only one support contributes a mean $0$ factor, while an edge with different multiplicity parities contributes one of the vanishing mixed moments in Proposition \ref{prop:general}.
	Each class has a unique representative in $\mathcal{B}^2$, obtained by replacing every triple edge by a single edge.
	Hence
	\begin{equation}\label{eq:3-cycle-expansion}
		\mathbb{E}[(A_3-A_2)^2]=\sum_{\gamma\in \mathcal{B}^2}\mathbb{E}\left[\left( \sum_{\gamma'\in \mathcal{B}^3\setminus\mathcal{B}^2,\,\gamma'\sim\gamma}\frac{1}{M^{m^{\gamma'} /2}}V(\gamma')W(\gamma') \right)^2 \right].
	\end{equation}
	For $\gamma_1,\gamma_2\sim\gamma$, independence across edges gives
	\begin{equation}\label{eq:3-cycle-similar-expansion}
		\begin{split}
			&\mathbb{E}\left[\left(\frac{1}{M^{m^{\gamma_1}/2}}V(\gamma_1)W(\gamma_1)\right)\left(\frac{1}{M^{m^{\gamma_2}/2}}V(\gamma_2)W(\gamma_2)\right)\right] \\
			&=\frac{1}{M^{(m^{\gamma_1}+m^{\gamma_2})/2}}V(\gamma_1)V(\gamma_2)\prod_{\hidx,\widx}\mathbb{E}\left[P_{\hidx\widx}^{(m_{\hidx\widx}^{\gamma_1})}P_{\hidx\widx}^{(m_{\hidx\widx}^{\gamma_2})}\right].
		\end{split}
	\end{equation}
	Now applying the Schwarz inequality to the last factor of \eqref{eq:3-cycle-similar-expansion}, we get
	\begin{equation}\label{eq:3-cycle-similar-expansion-full}
		\begin{split}
			&\left|\mathbb{E}\left[\left(\frac{1}{M^{m^{\gamma_1}/2}}V(\gamma_1)W(\gamma_1)\right)\left(\frac{1}{M^{m^{\gamma_2}/2}}V(\gamma_2)W(\gamma_2)\right)\right]\right| \\
			&\le \frac{1}{M^{(m^{\gamma_1}+m^{\gamma_2})/2}}V(\gamma_1)V(\gamma_2)\prod_{\hidx,\widx}\left(\mathbb{E}\left[\left(P_{\hidx\widx}^{(m_{\hidx\widx}^{\gamma_1})}\right)^2\right]^{1/2}\mathbb{E}\left[\left(P_{\hidx\widx}^{(m_{\hidx\widx}^{\gamma_2})}\right)^2\right]^{1 /2}\right).
		\end{split}
	\end{equation}
	and \eqref{eq:3-cycle-expansion} yields
	\begin{equation}\label{eq:3-cycle-expansion-bound}
		\mathbb{E}[(A_3-A_2)^2]\le\sum_{\gamma\in\mathcal{B}^2}\left(\sum_{\gamma'\in\mathcal{B}^3\setminus\mathcal{B}^2,\,\gamma'\sim\gamma}\frac{1}{M^{m^{\gamma'}/2}}V(\gamma')\prod_{\hidx,\widx}\mathbb{E}\left[\left(P_{\hidx\widx}^{(m_{\hidx\widx}^{\gamma'})}\right)^2\right]^{1/2} \right)^2.
	\end{equation}

	The bipartite prior weight is controlled on $\Omega$ by $s_\hidx,t_\widx\in[0,K]$, where $K=2N^{1/60}$.
	Replacing $s$ single edges by triple edges raises the total degree by $2s$ in each vertex class.
	Consequently, for $\gamma'\sim\gamma$,
	\[ V(\gamma')\le K^{4n_3^{\gamma'}}V(\gamma),\qquad n_1^{\gamma'}=n_1^\gamma-n_3^{\gamma'},\qquad n_2^{\gamma'}=n_2^\gamma. \]
	Since the edge weight product $\mathbb{E}[(P_{\hidx\widx}^{(m_{\hidx\widx}^{\gamma'})})^2]^{1/2}$ equals $(\tau_1')^{n_1^{\gamma'}/2}(\tau_2')^{n_2^{\gamma'}/2}(\tau_3')^{n_3^{\gamma'}/2}$, we obtain
	\[ \frac{1}{M^{m^{\gamma'}/2}}V(\gamma')\prod_{\hidx,\widx}\mathbb{E}\left[\left(P_{\hidx\widx}^{(m_{\hidx\widx}^{\gamma'})}\right)^2\right]^{1/2} \le\left(\frac{\tau_1'}{M}\right)^{n_1^\gamma/2}\left(\frac{\tau_2'}{M^2}\right)^{n_2^\gamma/2}V(\gamma)\left(\frac{K^4\sqrt{\tau_3'}}{M\sqrt{\tau_1'}}\right)^{n_3^{\gamma'}}. \]
	There are $\binom{n_1^\gamma}{s}$ choices of the $s\ge1$ edges to replace. 
	Summing over these choices by the binomial theorem gives
	\begin{equation}
		\begin{split}
			&\left(\sum_{\gamma'\in\mathcal{B}^3\setminus\mathcal{B}^2,\,\gamma'\sim\gamma}\frac{1}{M^{m^{\gamma'}/2}}V(\gamma')\prod_{\hidx,\widx}\mathbb{E}\left[\left(P_{\hidx\widx}^{(m_{\hidx\widx}^{\gamma'})}\right)^2\right]^{1/2} \right)^2 \\
			&\le\left(\frac{\tau_1'}{M}\right)^{n_1^\gamma}\left(\frac{\tau_2'}{M^2}\right)^{n_2^\gamma}V(\gamma)^2\left[\left(1+\frac{K^4\sqrt{\tau_3'}}{M\sqrt{\tau_1'}}\right)^{n_1^\gamma}-1\right]^2.
		\end{split}
	\end{equation}
	Thus, the estimate reduces to a weighted sum over $2$-multicycles:
	\begin{equation}\label{eq:3-cycle-2-cycle-bound}
		\begin{split}
			\mathbb{E}[(A_3-A_2)^2]
			&\le\sum_{\gamma\in \mathcal{B}^2}\left(\frac{\tau_1'}{M} \right)^{n_1^\gamma}\left( \frac{\tau_2'}{M^2} \right)^{n_2^\gamma}V(\gamma)^2\left[\left( 1+\frac{K^4\sqrt{\tau_3'}}{M\sqrt{\tau_1'}} \right)^{n_1^\gamma}-1 \right]^2 \\
			&\le\sum_{\gamma\in \mathcal{B}^2}\left(\frac{\tau_1'}{M} \right)^{n_1^\gamma}\left( \frac{\tau_2'}{M^2} \right)^{n_2^\gamma}V(\gamma)^2\left[\left( 1+\frac{K^4\sqrt{\tau_3'}}{M\sqrt{\tau_1'}} \right)^{2n_1^\gamma}-1 \right].
		\end{split}
	\end{equation}
	To prove that the right side of the equation above is $o(1)$, we use the cutoff properties, Lemma \ref{lem:1-cycle-cutoff} and Equation \eqref{eq:2-cycle-2-edge-cutoff}.
	Recall that we defined $K'_\epsilon\equiv K'_\epsilon(\tau_1')$ in the proof of Lemma \ref{lem:1-cycle-cutoff}.
	Letting
	\[ \tau_1'' :=\tau_1'\left( 1+\frac{K^4\sqrt{\tau_3'}}{M\sqrt{\tau_1'}} \right)^2, \]
	we find
	\begin{align*}
		&\sum_{\gamma\in \mathcal{B}^2, n_1^\gamma\ge K'_\epsilon(\tau_1'')}\left(\frac{\tau_1'}{M} \right)^{n_1^\gamma}\left( \frac{\tau_2'}{M^2} \right)^{n_2^\gamma}V(\gamma)^2\left( 1+\frac{K^4\sqrt{\tau_3'}}{M\sqrt{\tau_1'}} \right)^{2n_1^\gamma} \\
		&<\sum_{\gamma\in \mathcal{B}^2, n_1^\gamma\ge K'_\epsilon(\tau_1'')}\left(\frac{\tau_1''}{M} \right)^{n_1^\gamma}\left(\frac{\tau_2'}{M^2} \right)^{n_2^\gamma}V(\gamma)^2 <\epsilon,
	\end{align*}
	since $\tau_1'<\tau_1''<\snr_c\sqrt{\ratio}$ for any sufficiently large $M, N$.
	Set $K''_\epsilon = K'_\epsilon(\tau_1'')$.
	Then,
	\begin{align*}
		&\sum_{\gamma\in \mathcal{B}^2}\left(\frac{\tau_1'}{M} \right)^{n_1^\gamma}\left( \frac{\tau_2'}{M^2} \right)^{n_2^\gamma}V(\gamma)^2\left[\left( 1+\frac{K^4\sqrt{\tau_3'}}{M\sqrt{\tau_1'}} \right)^{2n_1^\gamma}-1 \right] \\
		&\le \sum_{\gamma\in \mathcal{B}^2,n_1^\gamma< K''_\epsilon}\left(\frac{\tau_1'}{M} \right)^{n_1^\gamma}\left( \frac{\tau_2'}{M^2} \right)^{n_2^\gamma}V(\gamma)^2\left[\left( 1+\frac{K^4\sqrt{\tau_3'}}{M\sqrt{\tau_1'}} \right)^{2n_1^\gamma}-1 \right]+\epsilon \\
		&\le\frac{CK^4}{M} \sum_{\gamma\in \mathcal{B}^2,n_1^\gamma< K''_\epsilon}\left(\frac{\tau_1'}{M} \right)^{n_1^\gamma}\left( \frac{\tau_2'}{M^2} \right)^{n_2^\gamma}V(\gamma)^2+\epsilon \\
		&\le\frac{CK^4}{M} \sum_{\gamma\in \mathcal{B}^2,n_1^\gamma< K''_\epsilon,n_2^\gamma<\tilde{K}_\epsilon}\left(\frac{\tau_1'}{M} \right)^{n_1^\gamma}\left( \frac{\tau_2'}{M^2} \right)^{n_2^\gamma}V(\gamma)^2+2\epsilon 
	\end{align*}
	for some constant $C$.
	In the proof of Lemma \ref{lem:2-cycle}, we find that the right side in the equation above can be estimated as
	\[ \sum_{\gamma\in \mathcal{B}^2,n_1^\gamma< K''_\epsilon,n_2^\gamma<\tilde{K}_\epsilon}\left(\frac{\tau_1'}{M} \right)^{n_1^\gamma}\left( \frac{\tau_2'}{M^2} \right)^{n_2^\gamma}V(\gamma)^2 =\sum_{\gamma\in \mathcal{B}^2,n_1^\gamma< K''_\epsilon,n_2^\gamma<\tilde{K}_\epsilon}\left(\frac{\tau_1'}{M} \right)^{n_1^\gamma}\left( \frac{\tau_2'}{M^2} \right)^{n_2^\gamma}+o(1),\]
	and we find by naive power counting that the right side is $O(1)$.
	Thus, we find that the right side of \eqref{eq:3-cycle-2-cycle-bound} is bounded by $2\epsilon+O(N^{- 14 /15})$, and this completes the proof of the desired lemma.
\end{proof}

\subsection{Bipartite 4-multicycle estimate}

Recall that our main object $A$ is defined as the sum over $\gamma\in \mathcal{B}^4$, the set of all bipartite $4$-multicycles.
We will prove the following result, which adapts Lemma 5.11 in \cite{Choo-Han-Lee2026}.
\begin{lem}\label{lem:4-cycle}
	Recall the definition of $R_2$ in \eqref{eq:2-cycle-rademacher}.
	Then, $\mathbb{E}[(A-R_2)^2]=o(1)$.
\end{lem}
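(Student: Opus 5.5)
By Lemma \ref{lem:3-cycle} and
\[ \mathbb{E}[(A-R_2)^2]\le 2\mathbb{E}[(A-A_3)^2]+2\mathbb{E}[(A_3-R_2)^2], \]
it suffices to show $\mathbb{E}[(A-A_3)^2]=o(1)$, where
\[ A-A_3=\sum_{\gamma\in\mathcal{B}^4\setminus\mathcal{B}^3}\frac{1}{M^{m^\gamma/2}}V(\gamma)W(\gamma). \]
The approach repeats the similarity-class argument of Lemma \ref{lem:3-cycle}. Quadruple edges are even, like double edges, so we use Definition \ref{defn:similarity}: every class now has a unique representative in $\mathcal{B}^2$, obtained by replacing each triple edge by a single edge and each quadruple edge by a double edge. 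Distinct classes are orthogonal, since an edge in only one support gives a mean-zero factor and an edge with differing parities gives one of the vanishing mixed moments $\mathbb{E}[P^{(1)}P^{(2)}]=\mathbb{E}[P^{(1)}P^{(4)}]=\mathbb{E}[P^{(2)}P^{(3)}]=\mathbb{E}[P^{(3)}P^{(4)}]=0$. Within a class, I bound cross terms by the Schwarz inequality as in \eqref{eq:3-cycle-similar-expansion-full}. This reduces $\mathbb{E}[(A-A_3)^2]$ to
\[ \sum_{\gamma\in\mathcal{B}^2}\Bigl(\sum_{\gamma'\sim\gamma,\ \gamma'\notin\mathcal{B}^3}\frac{1}{M^{m^{\gamma'}/2}}V(\gamma')\prod_{\hidx,\widx}\mathbb{E}\bigl[(P_{\hidx\widx}^{(m^{\gamma'}_{\hidx\widx})})^2\bigr]^{1/2}\Bigr)^2. \]

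Next I compare each $\gamma'$ with its representative $\gamma$. On $\Omega$ we have $s_\hidx,t_\widx\le K=2N^{1/60}$. Upgrading a single edge to a triple edge, or a double edge to a quadruple edge, raises the degrees of its two endpoints by $2$, so each upgrade costs a factor $K^4$ in $V$ and $M^{-1}$ in the normalization. Hence each upgraded single edge contributes a relative factor $K^4\sqrt{\tau_3'}/(M\sqrt{\tau_1'})$, and each upgraded double edge a factor $K^4\sqrt{\tau_4'}/(M\sqrt{\tau_2'})$. Since $\gamma'\notin\mathcal{B}^3$ requires at least one quadruple edge, the binomial theorem bounds the inner sum squared by
\[ \Bigl(\tfrac{\tau_1'}{M}\Bigr)^{n_1^\gamma}\Bigl(\tfrac{\tau_2'}{M^2}\Bigr)^{n_2^\gamma}V(\gamma)^2\Bigl(1+\tfrac{K^4\sqrt{\tau_3'}}{M\sqrt{\tau_1'}}\Bigr)^{2n_1^\gamma}\Bigl[\Bigl(1+\tfrac{K^4\sqrt{\tau_4'}}{M\sqrt{\tau_2'}}\Bigr)^{n_2^\gamma}-1\Bigr]^2. \]
If $\tau_2=0$, the double edges carry no weight and the argument simplifies; I treat that case directly.

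The main obstacle is summing this bound over $\mathcal{B}^2$ while controlling the growth in both $n_1^\gamma$ and $n_2^\gamma$. For the single edges, I absorb the factor into $\tau_1''=\tau_1'(1+K^4\sqrt{\tau_3'}/(M\sqrt{\tau_1'}))^2<\snr_c\sqrt{\ratio}$, as in Lemma \ref{lem:3-cycle}, and then invoke Lemma \ref{lem:1-cycle-cutoff} through \eqref{eq:2-cycle-1-edge-cutoff} to discard $n_1^\gamma\ge K''_\epsilon$ at cost $\epsilon$. For the double edges, I replace $\tau_2'$ by $\tau_2''=\tau_2'(1+K^4\sqrt{\tau_4'}/(M\sqrt{\tau_2'}))^2=\tau_2'+o(1)$. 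The factorial summation in \eqref{eq:2-cycle-2-edge-cutoff} still converges with $e^{4m_4n_4\tau_2''/\ratio}$, so $n_2^\gamma\ge\tilde K_\epsilon$ also costs only $\epsilon$. On the remaining finite range we have $[(1+x)^{n_2}-1]^2\le CK^8/M^2$, and the sum of weights there is $O(1)$ by the naive power counting from the proof of Lemma \ref{lem:2-cycle}. The total is therefore at most $2\epsilon+O(N^{-1+8/60})$, and letting $\epsilon\to0$ completes the proof.
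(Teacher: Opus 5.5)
Your proposal is correct and is essentially the paper's argument: similarity classes with representatives in $\mathcal{B}^2$, orthogonality across classes, Schwarz within a class, the binomial bound with relative factors $K^4\sqrt{\tau_3'}/(M\sqrt{\tau_1'})$ and $K^4\sqrt{\tau_4'}/(M\sqrt{\tau_2'})$, and then the cutoffs from Lemma \ref{lem:1-cycle-cutoff} and \eqref{eq:2-cycle-2-edge-cutoff}. The only difference is cosmetic: you split off $A-A_3$ and invoke Lemma \ref{lem:3-cycle} instead of splitting off $A-A_2$ and invoking Lemma \ref{lem:2-cycle}, so your class sum requires at least one quadruple edge, and you write out the final summation (including the absorption into $\tau_1''$ and $\tau_2''$) that the paper only indicates by ``as in the preceding proof.''
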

\begin{proof}[Proof of Lemma \ref{lem:4-cycle}]
	By Lemma \ref{lem:2-cycle} and
	\[ \mathbb{E}[(A-R_2)^2]\le 2\mathbb{E}[(A-A_2)^2]+2\mathbb{E}[(A_2-R_2)^2],\]
	it remains to estimate
	\begin{equation}\label{eq:4-cycle-2-cycle-diff}
		A-A_2=\sum_{\gamma\in \mathcal{B}^4\setminus\mathcal{B}^2} \frac{1}{M^{m^\gamma /2}}V(\gamma)W(\gamma).
	\end{equation}
	Using the similarity classes of Definition \ref{defn:similarity}, we now replace triple and quadruple edges by single and double edges, respectively.
	Applying the orthogonality argument and Schwarz inequality in \eqref{eq:3-cycle-similar-expansion-full}, we want to bound
	\[ \frac{1}{M^{m^{\gamma'}/2}}V(\gamma')\prod_{\hidx,\widx}\mathbb{E}\left[\left(P_{\hidx\widx}^{(m_{\hidx\widx}^{\gamma'})}\right)^2\right]^{1/2} \le  \left(\frac{\tau_1'}{M}\right)^{n_1^\gamma/2}\left(\frac{\tau_2'}{M^2}\right)^{n_2^\gamma/2}V(\gamma)\left(\frac{K^4\sqrt{\tau_3'}}{M\sqrt{\tau_1'}}\right)^{n_3^{\gamma'}}\left(\frac{K^4\sqrt{\tau_4'}}{M\sqrt{\tau_2'}}\right)^{n_4^{\gamma'}},\]
	for $\gamma'\in \mathcal{B}^4\setminus\mathcal{B}^2, \gamma'\sim\gamma$.
	Two applications of the binomial theorem therefore yield
	\begin{equation}
		\begin{split}
			&\mathbb{E}\left[\left(\sum_{\gamma'\in\mathcal{B}^4\setminus\mathcal{B}^2,\,\gamma'\sim\gamma}\frac{1}{M^{m^{\gamma'}/2}}V(\gamma')W(\gamma')\right)^2\right] \\
			&\le\left(\frac{\tau_1'}{M}\right)^{n_1^\gamma}\left(\frac{\tau_2'}{M^2}\right)^{n_2^\gamma}V(\gamma)^2\left[\left(1+\frac{K^4\sqrt{\tau_3'}}{M\sqrt{\tau_1'}}\right)^{n_1^\gamma}\left(1+\frac{K^4\sqrt{\tau_4'}}{M\sqrt{\tau_2'}}\right)^{n_2^\gamma}-1\right]^2.
		\end{split}
	\end{equation}
	Summing over $\gamma\in\mathcal{B}^2$ and applying the cutoff properties in Lemma \ref{lem:1-cycle-cutoff} and \eqref{eq:2-cycle-2-edge-cutoff}, as in the preceding proof, completes the argument.
\end{proof}

\subsection{Decomposition for the Rademacher case}

From Lemma \ref{lem:4-cycle}, $\mathbb{E}[(A-R_2)^2]=o(1)$, and thus we find that the limiting distribution of $A$ does not depend on the prior.
More precisely, if we let
\begin{equation}\label{eq:4-cycle-rademacher}
	R=\sum_{\gamma\in \mathcal{B}^4}\frac{1}{M^{m^\gamma /2}}W(\gamma),
\end{equation}
then
\[ \mathbb{E}[(A-R)^2]\le 2\mathbb{E}[(A-R_2)^2]+2\mathbb{E}[(R-R_2)^2]=o(1) .\]

We now use the simple-cycle product strategy of \cite{Aizenman-Lebowitz-Ruelle1987}, following the reduction in Section 5.6 of \cite{Choo-Han-Lee2026}.
Here, $A$ can be approximated by a product that contains only even simple cycles and double edges.
Our definition of the simple cycles is as follows.
\begin{defn}\label{defn:simple-cycle}
	A bipartite $1$-multicycle is a simple cycle if the degree of any vertex is at most $2$, and its nonisolated vertices form a connected graph.
	The set of all non-empty bipartite simple cycles is denoted by $\mathcal{B}_S$.
\end{defn}
Recall from \eqref{eq:1-cycle-rademacher}, 
\[ R_l := \sum_{\gamma\in \mathcal{B}^l}\frac{1}{M^{m^\gamma / 2}}W(\gamma) \]
for $l = 1,2$.
We have the following claim:
\begin{equation}\label{eq:rademacher-decomp}
	R_2 - \prod_{\gamma\in \mathcal{B}_S} \left( 1+\frac{W(\gamma)}{M^{m^\gamma / 2}} \right)\prod_{\hidx,\widx}\left( 1+\frac{P_{\hidx\widx}^{(2)}}{M} \right)= o_{\mathcal{\Noise}}(1).
\end{equation}
Note that for $\gamma\in \mathcal{B}_S, W(\gamma) = \prod_{\hidx, \widx: m_{\hidx\widx}^\gamma = 1}P_{\hidx\widx}^{(1)}$.
First, we show that
\begin{equation}\label{eq:rademacher-decomp-r1}
	R_2 = \left(\sum_{\gamma\in \mathcal{B}^1}\frac{1}{M^{m^\gamma / 2}}W(\gamma) \right)\prod_{\hidx,\widx}\left( 1+\frac{P_{\hidx\widx}^{(2)}}{M} \right)+o_\mathcal{\Noise}(1) 
	=R_1 \prod_{\hidx,\widx}\left( 1+\frac{P_{\hidx\widx}^{(2)}}{M} \right)+o_\mathcal{\Noise}(1).
\end{equation}
We can check that 
\[ \prod_{\hidx,\widx}\left( 1+\frac{P_{\hidx\widx}^{(2)}}{M} \right) =\sum_{\gamma'\in \mathcal{B}_D} \frac{1}{M^{m^\gamma / 2}} W(\gamma'),\]
where $\mathcal{B}_D$ denotes the set of all bipartite multicycles consisting of double edges only.
For $\gamma\in \mathcal{B}^1$ and $\gamma'\in\mathcal{B}_D$, we define the sum $\gamma+\gamma'$ to be the bipartite $3$-multicycle $\tilde{\gamma} \equiv \tilde{\gamma}(\gamma,\gamma')$ such that $m_{\hidx\widx}^{\tilde{\gamma}} = m_{\hidx\widx}^\gamma+m_{\hidx\widx}^{\gamma'}$.
Note that any bipartite $3$-multicycle can be uniquely decomposed into the sum of a bipartite $1$-multicycle and a bipartite multicycle in $\mathcal{B}_D$.
We also define
\[ \tilde{W}(\tilde{\gamma}(\gamma, \gamma')):= \prod_{\hidx,\widx}P_{\hidx\widx}^{(m_{\hidx\widx}^\gamma)}P_{\hidx\widx}^{(m_{\hidx\widx}^{\gamma'})} .\]
Then,
\[ \left(\sum_{\gamma\in \mathcal{B}^1}\frac{1}{M^{m^\gamma / 2}}W(\gamma) \right)\prod_{\hidx,\widx}\left( 1+\frac{P_{\hidx\widx}^{(2)}}{M} \right) = \sum_{\tilde{\gamma}\in \mathcal{B}^3}\frac{1}{M^{m^\gamma / 2}}\tilde{W}(\tilde{\gamma})=:R_3 .\]
Recall the definition of $A_3$ in \eqref{eq:3-cycle}.
If we consider the case where the prior is Rademacher and $P_{\hidx\widx}^{(3)}=P_{\hidx\widx}^{(1)}P_{\hidx\widx}^{(2)}$, then $A_3$ is equal to $R_3$.
(Note that we can check that the assumptions on $P_{\hidx\widx}^{(3)}$ holds in this case since $\mathbb{E}[P_{\hidx\widx}^{(3)}]=\mathbb{E}[P_{\hidx\widx}^{(1)}P_{\hidx\widx}^{(2)}]=0$ and $\mathbb{E}[P_{\hidx\widx}^{(2)}P_{\hidx\widx}^{(3)}]=\mathbb{E}[P_{\hidx\widx}^{(1)}( P_{\hidx\widx}^{(2)} )^2]=0$.)
Then, from the bipartite $3$-multicycle estimate, Lemma \ref{lem:3-cycle}, we find that \eqref{eq:rademacher-decomp-r1} holds.

It remains to prove the following simple cycle estimate:
\begin{lem}\label{lem:simple-cycle}
	\begin{equation}
		R_1 = \prod_{\gamma\in \mathcal{B}_S} \left( 1+\frac{W(\gamma)}{M^{m^\gamma / 2}} \right)+o_\mathcal{\Noise}(1) .
	\end{equation}
\end{lem}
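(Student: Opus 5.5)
Expanding the product on the right side, every term corresponds to a collection of simple cycles in $\mathcal{B}_S$. If these cycles are pairwise edge-disjoint, their union is a bipartite $1$-multicycle. Conversely, every $\gamma\in\mathcal{B}^1$ decomposes into edge-disjoint simple cycles, though in general not uniquely (a vertex of degree $4$ or more can be split in several ways). So I would write
\[ \prod_{\gamma\in\mathcal{B}_S}\Bigl(1+\frac{W(\gamma)}{M^{m^\gamma/2}}\Bigr)-R_1 = D_1 + D_2, \]
where $D_1$ collects the terms with two cycles sharing an edge, and $D_2$ collects the overcounting/undercounting discrepancy for $1$-multicycles whose simple-cycle decomposition is not unique. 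Since $W$ factorizes over edge-disjoint graphs, the terms in $D_2$ have the form $c(\gamma)M^{-m^\gamma/2}W(\gamma)$ with $\gamma\in\mathcal{B}^1$ having some vertex of degree at least $4$, and $|c(\gamma)|$ bounded by the number of decompositions. The plan is to show $\mathbb{E}[D_2^2]=o(1)$ and that $D_1$ is small in probability.

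For $D_2$, I would use orthogonality of distinct $1$-multicycles, exactly as in Lemma \ref{lem:1-cycle-monotonicity}, to get
\[ \mathbb{E}[D_2^2]=\sum (\tau_1'/M)^{m^\gamma}c(\gamma)^2 . \]
Large graphs are handled by a cutoff in the spirit of Lemma \ref{lem:1-cycle-cutoff-rademacher}. The number of decompositions grows at most like $C^{n^\gamma}$, and this is absorbed since $\tau_1'<\sqrt{\ratio}$ leaves room via the $\eta$ in that lemma; if that room is insufficient, I would instead bound by number of vertices of degree $\ge4$. Small graphs with a vertex of degree $\ge4$ number $O(N^{n^\gamma-1})$, which gives a contribution $O(N^{-1})$, as in \eqref{eq:1-cycle-small-finite}.

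For $D_1$, the terms involve products $W(\gamma_1)\cdots W(\gamma_k)$ with overlapping edges, so they carry factors $(P^{(1)}_{\hidx\widx})^2$ and are not centered. Following Aizenman--Lebowitz--Ruelle, I would compute $\mathbb{E}|D_1|$ or $\mathbb{E}[D_1^2]$ by bounding each $(P^{(1)})^{2}$ by its moment. The key counting step: two simple cycles of lengths $2k$ and $2l$ sharing at least one edge have at most $2k+2l-2$ free vertices, which gains a factor $N^{-1}$ compared to the weight $M^{-(k+l)}$. Summing over lengths with geometric weights $(\tau_1'/\sqrt{\ratio}+\eta)^{k+l}$ then gives $O(N^{-1})$ per overlapping pair. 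The remaining non-overlapping cycles factor out and contribute a product with bounded second moment, which by Lemma \ref{lem:1-cycle-monotonicity} is $\mathbb{E}[R_1^2]=O(1)$. To control this factorization I would condition on the overlap structure and apply Cauchy--Schwarz.

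The main obstacle is this $D_1$ estimate: making the ``one shared edge costs $N^{-1}$'' heuristic rigorous uniformly over arbitrarily many cycles, while keeping the geometric convergence governed by $\tau_1'<\sqrt{\ratio}$. I would first try truncating to cycles of length $\le K_\epsilon$ using \eqref{eq:1-cycle-cutoff-rademacher} together with an analogous tail bound for the product (its log-second moment is $\sum_k (\tau_1'^2/\ratio)^k/(2k)$-like). Once only bounded-length cycles remain, the overlapping contributions are a finite sum of $O(N^{-1})$ terms, and Markov's inequality gives $o_{\mathcal{\Noise}}(1)$.
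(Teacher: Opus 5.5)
Your split $S-R_1=D_1+D_2$ is the right bookkeeping. For families of bounded total size, both pieces are controlled as you say, because every offending graph or family has a vertex of degree at least $4$. The paper handles both at once by comparing $R_1^K$ and $S^K$ with $G^K$, the sum over vertex-disjoint unions of simple cycles with $n^\gamma<K$.

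The gap is the reduction to bounded total size on the product side. You flag it as the main obstacle, but the fix you propose does not work:
\begin{itemize}
\item Truncating the \emph{individual} cycle lengths at $K_\epsilon$ does not turn the overlapping contributions into a finite sum. The product $\prod_{n^\gamma\le K_\epsilon}(1+e(\gamma))$ still expands into families $\mathcal{F}$ with unboundedly many cycles. So there is no finite list of shapes to which the ``one shared edge costs $N^{-1}$'' count applies.
\item The factorization fails as well. Once an overlapping pair is fixed, the sum over the remaining cycles is again a product $\prod(1+e(\gamma))$. That is an $S$-type object, not $R_1$, so Lemma~\ref{lem:1-cycle-monotonicity} says nothing about it. It is also correlated with the fixed pair through shared edges and vertices. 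Its second moment is itself a sum over overlapping structures, which is exactly what you are trying to control.
\item A first-moment bound on $\mathbb{E}|D_1|$ without cancellation cannot work either: $N^{2k+2l-2}$ terms of size $M^{-(k+l)}$ do not sum to something small.
\end{itemize}

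The missing idea is to truncate in the \emph{total} length $n^{\mathcal{F}}$ without ever taking moments of the product.
\begin{itemize}
\item The paper sets $\varphi(z)=\prod_{\gamma\in\mathcal{B}_S}(1+z^{n^\gamma}e(\gamma))$, so that $S=\varphi(1)$ and $S^K=\sum_{n<K}\varphi^{(n)}(0)/n!$.
\item It applies Cauchy's estimate $|S-S^K|\le R^{-(K-1)}(R-1)^{-1}\sup_{|z|=R}|\varphi(z)|$, with $R>1$ chosen so that $R^2\tau_1<\snr_c\sqrt{\ratio}$.
\item The inequality $|1+\zeta|\le\exp(\operatorname{Re}\zeta+\tfrac12|\zeta|^2)$ bounds $\log\sup_{|z|=R}|\varphi(z)|$ by two linear statistics: $\sum_s R^{2s}\bigl|\sum_{\gamma\in\mathcal{B}_S,\,n^\gamma=2s}e(\gamma)\bigr|$ and $\tfrac12\sum_{\gamma\in\mathcal{B}_S}R^{2n^\gamma}e(\gamma)^2$. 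Their expectations are $O(1)$ by Cauchy--Schwarz and orthogonality of distinct simple cycles.
\item Markov's inequality then gives tightness of $\sup_{|z|=R}|\varphi(z)|$, hence $\lim_{K\to\infty}\limsup_{M,N\to\infty}\mathbb{P}(|S-S^K|>\epsilon)=0$.
\end{itemize}
Combined with $\mathbb{E}[(R_1-R_1^K)^2]<\epsilon$ from Lemma~\ref{lem:1-cycle-cutoff-rademacher}, only families of total size below $K$ remain. There your $D_1$ and $D_2$ involve finitely many shapes, each costing $O(N^{-1})$ in $L^2$.

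This route also makes your bound on the number of decompositions unnecessary. That is fortunate, because the claimed $C^{n^\gamma}$ growth is false: for even $d$, $K_{2,d}$ has $n^\gamma=2d$ edges and $(d-1)!!$ decompositions into $4$-cycles.
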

We prove Lemma \ref{lem:simple-cycle} in Section \ref{sec:technical}.
The proof is based on the first part of the proof of Proposition 2.2 in \cite{Aizenman-Lebowitz-Ruelle1987}, but in a bipartite version.

With the claim \eqref{eq:rademacher-decomp}, we are now ready to finish the proof of Proposition \ref{prop:general}.
\begin{proof}[Proof of Proposition \ref{prop:general}]
	Taking the logarithm of the product in the left side of \eqref{eq:rademacher-decomp},
	\begin{align*}
		\log \prod_{\gamma\in\mathcal{B}_S}\left( 1+\frac{W(\gamma)}{M^{m^\gamma / 2}} \right)\prod_{\hidx,\widx}\left( 1+\frac{P_{\hidx\widx}^{(2)}}{M} \right) 
		=\sum_{\gamma\in \mathcal{B}_S}\log\left( 1+\frac{W(\gamma)}{M^{m^\gamma / 2}} \right) +\sum_{\hidx,\widx}\log\left( 1+\frac{P_{\hidx\widx}^{(2)}}{M} \right)=:L_1 + L_2
	\end{align*}
	To prove that $L_1$ converges to a Gaussian, we follow the analysis in the cluster expansion.
	Consider
	\[ L_1 = \sum_{\gamma\in \mathcal{B}_S}\log\left( 1+\frac{W(\gamma)}{M^{m^\gamma / 2}} \right)=\sum_{\gamma\in \mathcal{B}_S}\left( \frac{W(\gamma)}{M^{m^\gamma / 2}} - \frac{1}{2}\left( \frac{W(\gamma)}{M^{m^\gamma / 2}} \right)^2 \right)+o_\mathcal{\Noise}(1). \]
	Now $\mathbb{E}[W(\gamma)] = 0$ and distinct cycles are orthogonal, we get
	\begin{equation}\label{eq:rademacher-variance-eq} 
		\mathbb{E}\left[\left(\sum_{\gamma\in \mathcal{B}_S} \frac{W(\gamma)}{M^{m^\gamma / 2}}\right)^2\right]=\sum_{\gamma\in \mathcal{B}_S} \mathbb{E}\left[\frac{W(\gamma)^2}{M^{m^\gamma}}\right] =\sum_{s=2}^{\min\{M,N\}}\sum_{\gamma\in\mathcal{B}_S:|\gamma|=2s}\frac{{\tau'_1}^{2s}}{M^{2s}}.
	\end{equation}
	Since the number of distinct bipartite simple cycles with the length $2s$ is $M(M-1)\cdots(M-s+1)N(N-1)\cdots(N-s+1) / (2s)$, \eqref{eq:rademacher-variance-eq} leads to
	\[ \sum_{s=2}^{\min\{M,N\}}\left(\frac{N}{M}\right)^s\frac{{\tau'_1}^{2s}}{2s}+\left( \sum_{s=2}^{\min\{M,N\}}\left(\frac{N}{M}\right)^s\frac{{\tau'_1}^{2s}}{2s} \right)\times O(N^{-1}) .\]
	From $\tau_1 < \snr_c \sqrt{\ratio}$, we can take $\epsilon >0$ such that $(1 / \ratio+\epsilon)\tau_1^2 < \snr_c^2$.
	Since $\tau'_1 = \tau_1+o(1)$, for sufficiently large $M, N$, we have
	\[ \left( \frac{N}{M} \right){\tau'_1}^{2}<\left( \frac{1}{\ratio}+\epsilon \right)\tau_1^{2} < \snr_c^2 \le 1,\]
	since $M / N \to \ratio$ as $M,N\to \infty$.
	Therefore, the sum converges to
	\[ \sum_{s=2}^{\infty}\frac{1}{2s}\left( \frac{\tau_1^2}{\ratio} \right)^s = -\frac{1}{2}\left( \log\left( 1-\frac{\tau_1^2}{\ratio} \right)+\frac{\tau_1^2}{\ratio}\right) .\]
	From similar calculations, by Wick's identity for the moments of the Gaussian distribution, we find
	\[ L_1 \implies \mathcal{N}(-\rho_1, 2\rho_1), \]
	where
	\[ \rho_1 = \frac{1}{2}\sum_{s=2}^{\infty}\frac{1}{2s}\left(\frac{\tau_1^2}{d_0} \right)^s = -\frac{1}{4}\left( \log\left(1-\frac{\tau_1^2}{d_0}\right)+\frac{\tau_1^2}{d_0} \right)  .\]

	For $L_2$, since $P_{\hidx\widx}^{(2)}$'s are i.i.d., we can simply apply the central limit theorem and the law of large numbers to find that
	\[ L_2 = \sum_{\hidx,\widx}\log\left( 1+\frac{P_{\hidx\widx}^{(2)}}{M} \right) =\sum_{\hidx,\widx}\left(\frac{P_{\hidx\widx}^{(2)}}{M}-\frac{1}{2}\left(\frac{P_{\hidx\widx}^{(2)}}{M} \right)^2 \right)+o_\mathcal{\Noise}(1) \implies \mathcal{N}(-\rho_2, 2\rho_2), \]
	where $\rho_2 = \tau_2 / (2\ratio)$.

	Since $P_{\hidx\widx}^{(1)}$ and $P_{\hidx\widx}^{(2)}$ are orthogonal to each other, from \eqref{eq:rademacher-decomp}, we find that $\log A$ converges in distribution to Gaussian with the mean $(-\rho_1-\rho_2)$ and the variance $2(\rho_1+\rho_2)$. This completes the proof of Proposition \ref{prop:general}.

\end{proof}

\section{Technical details}\label{sec:technical}

This section provides the details used in Sections \ref{sec:multigraph} and \ref{sec:prop-proof}.
The proofs below distinguish the arguments inherited from \cite{Choo-Han-Lee2026} from the calculations needed for two overlaps, bipartite cycle counts, and the varying aspect ratio.

\begin{proof}[Proof of Lemma \ref{lem:prior-strong}]
	We adapt the idea of the proof of Lemma 4.3 in \cite{Choo-Han-Lee2026}.
	First, we consider the case when $\epsilon_u=\epsilon_v=0$.
	We notice that $\sqrt{M}\langle\U,\U'\rangle$ and $\sqrt{N}\langle\V,\V'\rangle$ are independent and converges in distribution to $\mathcal{N}(0, 1)$ by the central limit theorem.
	Thus, for the event $\Omega_{M,N}$ in Condition \ref{cond:prior}, by choosing $\Omega'_{M,N}=\Omega_{M,N}$,
	\[\mathds{1}(\Omega'_{M,N}(\U,\V))\mathds{1}(\Omega'_{M,N}(\U',\V'))
	\exp\left(N\snr \langle\U,\U'\rangle \langle\V,\V'\rangle \right)\implies\exp\left( \frac{\snr}{\sqrt{\ratio}}Z_1Z_2 \right)  .\]
	where $Z_1, Z_2$ are the standard Gaussian distribution.
	Further, if we let $\delta >0$ be a small constant satisfying $\snr(1+\delta)<\snr_c\sqrt{\ratio}$, then there exists a small constant $\epsilon> 0$ such that  $\snr(1+\delta)< (1-\epsilon)\snr_c \sqrt{\ratio}$.	
	Now if we take $M,N$ sufficiently large such that $(1-\epsilon)\sqrt{\ratio} < \sqrt{M / N}$, then
	\begin{align*}
		&\mathbb{E}_{\substack{\U,\U'\sim\mathcal{U},\\\V,\V'\sim\mathcal{V}}}
		\left[\left(\mathds{1}(\Omega'_{M,N}(\U,\V))\mathds{1}(\Omega'_{M,N}(\U',\V'))
		\exp\left(N\snr \langle\U,\U'\rangle \langle\V,\V'\rangle \right)\right)^{1+\delta}\right] \\
		&\le \mathbb{E}_{\substack{\U,\U'\sim\mathcal{U},\\\V,\V'\sim\mathcal{V}}}
		\left[\mathds{1}(\Omega'_{M,N}(\U,\V))\mathds{1}(\Omega'_{M,N}(\U',\V'))
		\exp\left( \sqrt{MN}\snr_c \langle\U,\U'\rangle \langle\V,\V'\rangle \right)\right]=O(1),
	\end{align*}
	by Condition \ref{cond:prior}.
	Thus, we have the convergence in $L^{1+\delta}$, which implies the uniform integrability and also, together with the convergence in distribution, implies the convergence in expectation.
	Therefore, by the moment generating function of the standard Gaussian distribution and the chi-squared distribution,
	\begin{align*}
		&\mathbb{E}_{\substack{\U,\U'\sim\mathcal{U},\\\V,\V'\sim\mathcal{V}}}
		\left[\mathds{1}(\Omega'_{M,N}(\U,\V))\mathds{1}(\Omega'_{M,N}(\U',\V'))
		\exp\left(N\snr \langle\U,\U'\rangle \langle\V,\V'\rangle \right)\right] \\
		& \to \mathbb{E}\left[\exp\left( \frac{\snr}{\sqrt{\ratio}}Z_1 Z_2 \right) \right]=\left.\mathbb{E}\left[\mathbb{E}\left[\exp\left(\frac{\snr}{\sqrt{\ratio}}Z_1 Z_2 \right)\right| Z_1\right]\right] \\
		& \quad = \mathbb{E}\left[\exp\left(\frac{\snr^2}{2\ratio}Z_2^2 \right) \right] = \left( 1-\frac{\snr^2}{\ratio} \right)^{-\frac{1}{2}}.
	\end{align*}

	To prove the case when $\epsilon_v=1$, we define $\Omega'_{M,N}(\U,\V)=\Omega_{M,N}(\U,\V)\cap\{|\sum_\widx v_\widx^2 - 1| \le N^{-1 / 3}\}$.
	Then, $\mathbb{P}_{\mathcal{U}\times \mathcal{V}}((\Omega'_{M,N})^c) = o(1)$.
	Since $\|\V\|$ converges to $1$ in probability, we again find that
	\[\mathds{1}(\Omega'_{M,N}(\U,\V))\mathds{1}(\Omega'_{M,N}(\U',\V'))
	\exp\left(\frac{N\snr \langle\U,\U'\rangle \langle\V,\V'\rangle}{\|\V\|\|\V'\|} \right)\implies\exp\left( \frac{\snr}{\sqrt{\ratio}}Z_1Z_2 \right)  .\]
	Let $\delta'>0$ be a small constant such that $\snr(1+\delta') / (\|\V\|\|\V'\|) < \snr_c \sqrt{\ratio}$ on the event $\{|\sum_\widx v_\widx^2 -1| \le N^{-1 /3}\}\cap\{|\sum_\widx (v'_\widx)^2 -1| \le N^{-1 /3}\}$, for any sufficiently large $M, N$.
	Then we can also take a small constant $\epsilon'>0$ such that $\snr(1+\delta')/ (\|\V\|\|\V'\|) < (1-\epsilon')\snr_c \sqrt{\ratio}$ and $(1-\epsilon')\sqrt{\ratio} < \sqrt{M / N}$ for sufficiently large $M,N$. 
	We then have the convergence in $L^{1+\delta'}$ as in the first case of the lemma.
	We now proceed as in the first part and concluded that the second case holds.
	Adding conditions on $\U$, we can prove the remaining part of the lemma similarly.
	We omit the details.
\end{proof}

\begin{proof}[Proof of Lemma \ref{lem:1-cycle-cutoff-rademacher}]
	We closely follow the proof of Lemma 5.4 in \cite{Choo-Han-Lee2026}.
	Suppose that $\gamma \in \mathcal{B}^1$.
	We can decompose it into the union $\gamma_1\cup\gamma_2\cup\cdots\cup\gamma_n$ where each $\gamma_i(i=1,2,\ldots,n)$ is a simple cycle defined in Definition \ref{defn:simple-cycle}.

	Since $n^\gamma=\sum_{i=1}^{n}n^{\gamma_i}$ with the decomposition, we have the inequality
	\begin{equation}\label{eq:simple-cycle-decomp}
		\sum_{\gamma\in \mathcal{B}^1}\left( \frac{\alpha}{M} \right)^{n^\gamma} \le \prod_{\gamma'\in \mathcal{B}_S}\left( 1+\left( \frac{\alpha}{M} \right)^{n^{\gamma'}} \right).
	\end{equation}
	(The inequality \eqref{eq:simple-cycle-decomp} is strict, since the right side contains terms of a form
	\[ \left( \frac{\alpha}{M} \right)^{n^{\gamma'_1}} \left( \frac{\alpha}{M} \right)^{n^{\gamma'_2}}\cdots \left( \frac{\alpha}{M} \right)^{n^{\gamma'_k}}\]
	where some of the $\gamma'_i$'s share a common edge.)

	For given $\eta >0$, since $\alpha^2 < \ratio$, we take $\epsilon > 0$ such that $(1 /\ratio +\epsilon)\alpha^2 < \min\{1,(\alpha /\sqrt{\ratio} + \eta)^2\}$.
	Then, we take $M, N$ sufficiently large so that $N /M < 1 / \ratio + \epsilon$.
	Now, consider a function
	\[ \Phi(k) = \begin{cases}
		e^{\delta s} & \text{if } k\ge s \\
		0 & \text{if } k<s
	\end{cases}, \]
	with
	\[ \delta = \frac{1}{2} \log\left[ \left(\frac{1}{\ratio}+\epsilon\right)^{-1} \alpha^{-2} \left( 1-\frac{\log s}{2s} \right)  \right], \]
	or equivalently $(1 /\ratio +\epsilon) e^{2\delta}\alpha^2 = 1- (\log s)/ (2s).$ 
	Then, since $\Phi(k) \le  e^{\delta k}$,
	\[ e^{\delta s} \sum_{\gamma\in \mathcal{B}^1, n^\gamma \ge s} \left( \frac{\alpha}{M} \right) ^{n^\gamma} = \sum_{\gamma\in \mathcal{B}^1} \Phi(n^\gamma)\left( \frac{\alpha}{M} \right)^{n^\gamma} \le \sum_{\gamma\in \mathcal{B}^1}\left( \frac{e^\delta \alpha}{M} \right)^{n^\gamma} .\]
	Thus, from \eqref{eq:simple-cycle-decomp},
	\[ e^{\delta s}\sum_{\gamma\in \mathcal{B}^1, n^\gamma \ge s}\left( \frac{\alpha}{M} \right)^{n^\gamma}\le \prod_{\gamma'\in \mathcal{B}_S}\left( 1+\left(\frac{e^\delta \alpha}{M} \right)^{n^{\gamma'}} \right)\le \exp\left( \sum_{\gamma'\in \mathcal{B}_S}\left( \frac{e^\delta \alpha}{M} \right)^{n^{\gamma'}} \right) .\]
	Since the number of distinct simple cycles with length $2k$ is $M(M-1)\ldots(M-k+1)N(N-1)\ldots(N-k+1) / (2k)$, for $k\ge 2$, we obtain
	\begin{align*}
		&\sum_{\gamma\in \mathcal{B}^1, n^\gamma \ge s}\left( \frac{\alpha}{M} \right)^{n^\gamma}
		\le e^{-\delta s}\exp\left( \sum_{k=2}^{\infty}\left( \frac{N}{M} \right)^k \frac{(e^\delta \alpha)^{2k}}{2k} \right) <e^{-\delta s} \exp\left( \sum_{k=2}^{\infty}\left( \frac{1}{\ratio}+\epsilon \right)^k\frac{(e^{\delta}\alpha)^{2k}}{2k} \right) \\
		&= \exp\left( -\delta s - \frac{1}{2}\log\left(1-\left(\frac{1}{\ratio}+\epsilon\right)e^{2\delta}\alpha^2 \right)- \frac{1}{2} \left( \frac{1}{\ratio}+\epsilon \right)e^{2\delta}\alpha^2  \right) \\
		&= \exp\left( s(\log\alpha)+\frac{s}{2}\log\left( \frac{1}{\ratio}+\epsilon \right) -s\log\left( 1-\frac{\log s}{2s} \right)-\frac{1}{2}\log\left( \frac{\log s}{2s} \right)-\frac{1}{2}\left( 1-\frac{\log s}{2s} \right) \right)  \\
		&= s\alpha^s\left(\frac{1}{\ratio}+\epsilon \right)^{s /2} \exp\left( -\frac{1}{2}\log\log s+O(1) \right).
	\end{align*}
	This proves the desired lemma.
\end{proof}

\begin{proof}[Proof of Lemma \ref{lem:1-cycle-large}]
	We follow the second-moment comparison of Lemma 5.6 in \cite{Choo-Han-Lee2026}, using the bipartite overlap estimate in Lemma \ref{lem:prior-strong}.
	Assume the GOE disorder for the log likelihood ratio in Theorem \ref{thm:lr-additive}, i.e.,
	\begin{equation}\label{eq:p-defn-for-goe-llr}
		p_{\hidx\widx}^{(n)}=\frac{(-1)^n \snr^{n / 2}p^{(n)}(\sqrt{N}\noise)}{n! p(\sqrt{N}\noise)}\;(n=1,2,3,4).
	\end{equation}
	where $\sqrt{N}\noise$'s are independent standard Gaussian random variables.
	Then, we get $\tau'_1=\tau_1 = \snr<\snr_c\sqrt{\ratio}$.

	Recall that the left side of \eqref{eq:prior-strong-iid} is the second moment of the log likelihood ratio with GOE disorder, conditioned on $\Omega'_{M,N}$.
	Further, from the proof of Lemma \ref{lem:prior-strong}, it is not hard to see that the conclusion of Lemma \ref{lem:prior-strong} does not change even if $\Omega'_{M,N}$ is replaced by $\Omega$.
	Thus, from Lemma \ref{lem:prior-strong} and the proof of Theorem \ref{thm:lr-additive},
	\begin{equation}\label{eq:second-moment-graph-form}
		\mathbb{E}\left[\left( \sum_{\gamma\in \mathcal{B}^4}\frac{1}{M^{m^\gamma / 2}}V(\gamma)W(\gamma) \right)^2\right]=\left(1-\frac{\snr^2}{\ratio}\right)^{-1 / 2}+o(1).
	\end{equation}
	From the GOE assumption, $p_{\hidx\widx}^{(n)}$ is a constant multiple of the $n$-th Hermite polynomial with respect to the Gaussian of the form $e^{-x^2 / 2}$.
	Due to the orthogonality of the Hermite polynomials, we can easily see that if $\gamma$ and $\gamma'$ are distinct multicylces, then they are orthogonal.
	Thus, we find from \eqref{eq:second-moment-graph-form} that
	\[ \sum_{\gamma\in \mathcal{B}^4} \frac{1}{M^{m^\gamma}}V(\gamma)^2\mathbb{E}[W(\gamma)^2]=\left(1-\frac{\snr^2}{\ratio}\right)^{-1 / 2}+o(1).\]
	In particular, considering the Rademacher prior,
	\begin{equation}\label{eq:second-moment-graph-form-rademacher} 
		\sum_{\gamma\in \mathcal{B}^4}\frac{1}{M^{m^\gamma}}\mathbb{E}[W(\gamma)^2]=\left(1-\frac{\snr^2}{\ratio}\right)^{-1 / 2}+o(1).
	\end{equation}
	Comparing the equations above, we obtain
	\begin{equation}\label{eq:4-cycle-rademacher-difference}
		\sum_{\gamma\in \mathcal{B}^4}\frac{1}{M^{m^\gamma}}(V(\gamma)^2-1)\mathbb{E}[W(\gamma)^2]=o(1) .
	\end{equation}

	To proceed further, we introduce the bipartite $4$-multicycle cutoff property for Rademacher prior, which is the following lemma.
	\begin{lem}[$4$-multicycle cutoff property for Rademacher prior with GOE disorder]\label{lem:4-cycle-cutoff-rademacher}
		Assume the GOE disorder for the log likelihood ratio in Theorem \ref{thm:lr-additive}.
		If $\snr < \snr_c\sqrt{\ratio}$, then for any fixed $\epsilon>0$, there exists ($N$-independent) constant $K'''_\epsilon=K'''_\epsilon(\snr)$ such that for any $s >K'''_\epsilon$, and for any sufficiently large $N$,
		\[ \sum_{\gamma\in \mathcal{B}^4, n^\gamma\ge s}\frac{1}{M^{m^\gamma}}\mathbb{E}[W(\gamma)^2]<\epsilon .\]
	\end{lem}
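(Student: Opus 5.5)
The plan is to reduce the $4$-multicycle sum to the $1$-multicycle sum, exactly as in the proof of Lemma \ref{lem:2-cycle}, using the explicit Hermite structure of the GOE disorder. Under the GOE assumption, $p_{\hidx\widx}^{(n)} = \snr^{n/2} h_n(\sqrt{N}\noise)/n!$, where $h_n$ is the $n$-th (probabilists') Hermite polynomial. This gives
\[ \tau_n' = \tau_n = \frac{\snr^n}{n!}. \]
Hence for $\gamma \in \mathcal{B}^4$ we have
\[ \frac{1}{M^{m^\gamma}}\mathbb{E}[W(\gamma)^2] = \left(\frac{\snr}{M}\right)^{n_1^\gamma}\left(\frac{\snr^2}{2M^2}\right)^{n_2^\gamma}\left(\frac{\snr^3}{6M^3}\right)^{n_3^\gamma}\left(\frac{\snr^4}{24M^4}\right)^{n_4^\gamma}. \]
I will decompose each $\gamma$ uniquely as its parity skeleton, as in Definition \ref{defn:similarity}. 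Replacing triple edges by single edges and quadruple edges by double edges gives a representative $\gamma_0 \in \mathcal{B}^2$. Then $\gamma_0$ splits further into its single-edge part $\gamma_1 \in \mathcal{B}^1$ together with a set of double edges on the remaining pairs; since a double edge adds $2$ to each endpoint degree, the single-edge part of a multicycle is itself a multicycle.

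The sum over all $\gamma$ with a fixed single-edge part $\gamma_1$ is bounded by a product over pairs. On each edge of $\gamma_1$ the factor is $\frac{\snr}{M}\bigl(1+\frac{\snr^2}{6M^2}\bigr)$, accounting for an upgrade to a triple edge. Each of the at most $MN$ other pairs contributes a factor
\[ 1+\frac{\snr^2}{2M^2}+\frac{\snr^4}{24M^4}, \]
for empty, double or quadruple. This yields
\[ \sum_{\gamma\in\mathcal{B}^4, n^\gamma \ge s}\frac{\mathbb{E}[W(\gamma)^2]}{M^{m^\gamma}} \le \exp\!\Big(MN\big(\tfrac{\snr^2}{2M^2}+\tfrac{\snr^4}{24M^4}\big)\Big)\sum_{\gamma_1\in\mathcal{B}^1}\Big(\frac{\snr(1+O(M^{-2}))}{M}\Big)^{n^{\gamma_1}}\mathds{1}(n^\gamma\ge s). \]
The prefactor converges to $e^{\snr^2/(2\ratio)}$, which is $O(1)$.

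The remaining issue is that $n^\gamma \ge s$ does not force $n^{\gamma_1}$ to be large, because $\gamma$ may have many double or quadruple edges; this is the main obstacle. I will handle it by splitting into two cases, following \eqref{eq:2-cycle-1-edge-cutoff}--\eqref{eq:2-cycle-2-edge-cutoff}.

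\emph{Case $n^{\gamma_1} \ge s/2$.} Apply Lemma \ref{lem:1-cycle-cutoff-rademacher} with $\alpha = \snr(1+O(M^{-2})) < \sqrt{\ratio}$. This is allowed since $\snr < \snr_c\sqrt{\ratio} \le \sqrt{\ratio}$; the bound $\snr_c \le 1$ is implicit in Condition \ref{cond:prior}, as in the proof of Lemma \ref{lem:1-cycle}. The tail is then at most $(s/2)(\snr/\sqrt{\ratio}+\eta)^{s/2}$, which is below $\epsilon/2$ for $s$ large.

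\emph{Case at least $s/2$ even-multiplicity edges.} Expand the product over pairs as $\sum_r \frac{1}{r!}\big(MN(\frac{\snr^2}{2M^2}+\frac{\snr^4}{24M^4})\big)^r$, with the factor $1/r!$ as in the $\gamma^{+r}$ argument of Lemma \ref{lem:2-cycle}. The contribution with $r \ge s/2$ is then bounded by the tail $\sum_{r\ge s/2} c^r/r!$ with $c \to \snr^2/(2\ratio)$. This is multiplied by the full $1$-multicycle sum $\sum_{\gamma_1}(\snr'/M)^{n^{\gamma_1}}$. That sum is $O(1)$ by \eqref{eq:simple-cycle-decomp} and the simple-cycle count, which give the bound $\exp(\sum_{k\ge2}(N/M)^k\snr'^{2k}/2k) = O(1)$ since $\snr'^2 N/M < 1$ eventually. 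Hence this contribution is also below $\epsilon/2$ for $s$ large.

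Choosing $K'''_\epsilon$ so that both tails are below $\epsilon/2$ completes the argument. The constants are uniform in $M,N$ once $N/M < 1/\ratio + \epsilon'$ with $(1/\ratio+\epsilon')\snr^2 < 1$.
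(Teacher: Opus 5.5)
Your argument is correct. Its core matches the paper's: both reduce $\{n^\gamma\ge s\}$ to two cases. In the first, there are many single edges, handled by Lemma \ref{lem:1-cycle-cutoff-rademacher}. In the second, there are many even-multiplicity edges, handled by an exponential-series tail $\sum_{r\ge r_0}c^r/r!$ times an $O(1)$ full $1$-multicycle sum.

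Where you differ is in how multiplicities $3$ and $4$, and the total sum, are handled. The paper applies the two-case split only inside $\mathcal{B}^2$. It disposes of $\mathcal{B}^4\setminus\mathcal{B}^2$ separately with the similarity-class and Schwarz bound from Lemma \ref{lem:4-cycle}, which produces a prefactor $CK^4/M$. It bounds the full $\mathcal{B}^2$ and $\mathcal{B}^1$ sums by invoking the second-moment identity \eqref{eq:second-moment-graph-form-rademacher}. You instead use the exact Hermite orthogonality. Summing the multiplicity choices over a fixed odd part $\gamma_1$ gives an exact product formula over pairs for all of $\mathcal{B}^4$. You then bound the total $1$-multicycle sum directly from \eqref{eq:simple-cycle-decomp} and the simple-cycle count.

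Your route buys two things. First, it is self-contained: it does not rest on \eqref{eq:second-moment-graph-form-rademacher}, which relies on Lemma \ref{lem:prior-strong} and on identifying the second moment of the truncated expansion with that of the likelihood ratio. Second, it avoids the paper's step $\bigl[(1+a/M)^{n_1}(1+b/M)^{n_2}-1\bigr]^2\le CK^4/M$. That step is not uniform in $n_1$ and would need the enlarged-$\tau_1$ cutoff from Lemma \ref{lem:3-cycle} to be justified. The paper's route, in exchange, needs no new computation, since it reuses the similarity-class estimate already proved for general $P^{(n)}$ and priors.

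Two minor fixes are needed in your writeup. Lemma \ref{lem:1-cycle-cutoff-rademacher} is stated for fixed $\alpha$, so apply it with a fixed $\alpha\in(\snr,\sqrt{\ratio})$ and use monotonicity in $\alpha$, rather than taking $\alpha=\snr(1+O(M^{-2}))$. Also, carry the bounded prefactor $e^{\snr^2/(2\ratio)+o(1)}$ when choosing $s$ in your first case.
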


	We will prove Lemma \ref{lem:4-cycle-cutoff-rademacher} later in this section.
	With Lemma \ref{lem:4-cycle-cutoff-rademacher}, we can estimate the sum in \eqref{eq:4-cycle-rademacher-difference} for the case $V(\gamma)\le 1$ and $n^\gamma\ge K'''_\epsilon$ by
	\begin{align*}
			&\sum_{\gamma\in \mathcal{B}^4, n^\gamma\ge K'''_\epsilon, V(\gamma)\le 1}\frac{1}{M^{m^\gamma}}(1-V(\gamma)^2)\mathbb{E}[W(\gamma)^2] \\
			&\le \sum_{\gamma\in \mathcal{B}^4, n^\gamma\ge K'''_\epsilon, V(\gamma)\le 1}\frac{1}{M^{m^\gamma}}\mathbb{E}[W(\gamma)^2] 
			\le \sum_{\gamma\in \mathcal{B}^4, n^\gamma \ge K'''_\epsilon} \frac{1}{M^{m^\gamma}}\mathbb{E}[W(\gamma)^2]<\epsilon.
	\end{align*}
	If $V(\gamma)\le 1$ and $n^\gamma<K'''_\epsilon$, by following the argument we used to derive \eqref{eq:1-cycle-small-finite}, we find that
	\[ \sum_{\gamma\in \mathcal{B}^4, n^\gamma\le K'''_\epsilon, V(\gamma)\le 1} \frac{1}{M^{m^\gamma}}(1-V(\gamma)^2)\mathbb{E}[W(\gamma)^2]<\epsilon.\]
	Thus, we find that
	\[ \sum_{\gamma\in \mathcal{B}^4, V(\gamma)\le 1}\frac{1}{M^{m^\gamma}}(1-V(\gamma)^2)\mathbb{E}[W(\gamma)^2]=o(1) .\]
	Plugging it into \eqref{eq:4-cycle-rademacher-difference}, we also find that
	\[ \sum_{\gamma\in \mathcal{B}^4, V(\gamma)>1}\frac{1}{M^{m^\gamma}}(V(\gamma)^2-1)\mathbb{E}[W(\gamma)^2]=o(1) .\]
	Since the summand is positive, by restricting $\gamma$ to be in $\mathcal{B}^1$, we find that the desired lemma holds.

\end{proof}

\begin{proof}[Proof of Lemma \ref{lem:1-cycle-cutoff}]
	This is the cutoff argument of Lemma 5.8 in \cite{Choo-Han-Lee2026}, applied to the bipartite estimates in Lemmas \ref{lem:1-cycle-cutoff-rademacher} and \ref{lem:1-cycle-large}.
	We again separate the sum into the case $V(\gamma)\le 1$ and the case $V(\gamma)>1$, which leads us to
	\begin{align*} 
		&\sum_{\gamma\in \mathcal{B}^1, n^\gamma\ge s}\left(\frac{\tau'_1}{M} \right)^{n^\gamma}V(\gamma)^2 \\
		&= \sum_{\gamma\in \mathcal{B}^1, n^\gamma\ge s, V(\gamma)\le 1}\left(\frac{\tau'_1}{M} \right)^{n^\gamma}V(\gamma)^2
		+ \sum_{\gamma\in \mathcal{B}^1, n^\gamma\ge s, V(\gamma)>1}\left(\frac{\tau'_1}{M} \right)^{n^\gamma}V(\gamma)^2.
	\end{align*}
	For the first term in the right side, from Lemma \ref{lem:1-cycle-cutoff-rademacher}, for given $\epsilon>0$, there exists $K_\epsilon$ such that
	\[ \sum_{\gamma\in \mathcal{B}^1, n^\gamma \ge K_\epsilon}\left( \frac{\tau'_1}{M} \right)^{n^\gamma}<\epsilon .\]
	Then if $s\ge K_\epsilon$,
	\begin{equation}\label{eq:2-cycle-small-cutoff}
		\sum_{\gamma\in\mathcal{B}^1,n^\gamma\ge s, V(\gamma)\le 1}\left(\frac{\tau'_1}{M} \right)^{n^\gamma}V(\gamma)^2
		\le \sum_{\gamma\in \mathcal{B}^1, n^\gamma\ge s, V(\gamma)\le 1}\left(\frac{\tau'_1}{M} \right)^{n^\gamma}<\epsilon.
	\end{equation}

	For the case $V(\gamma)>1$, from Lemma \ref{lem:1-cycle-large}, for given $\epsilon>0$, there exists $K'_\epsilon$ such that
	\[ \sum_{\gamma\in \mathcal{B}^1,n^\gamma\ge K'_\epsilon,V(\gamma)>1}\left(\frac{\tau'_1}{M} \right)^{n^\gamma}(V(\gamma)^2-1)<\epsilon .\]
	Now, choosing $s>\max\{K_\epsilon, K'_\epsilon\}$, we find that
	\begin{align*}
		&\sum_{\gamma\in \mathcal{B}^1, n^\gamma \ge s, V(\gamma)>1}\left(\frac{\tau'_1}{M} \right)^{n^\gamma}V(\gamma)^2 \\
		&\le \sum_{\gamma\in \mathcal{B}^1, n^\gamma \ge s, V(\gamma)>1}\left(\frac{\tau'_1}{M} \right)^{n^\gamma}(V(\gamma)^2-1)+\sum_{\gamma\in \mathcal{B}^1, n^\gamma \ge s, V(\gamma)>1}\left(\frac{\tau'_1}{M} \right)^{n^\gamma} < 2\epsilon.
	\end{align*}
	Combining with \eqref{eq:2-cycle-small-cutoff}, we prove the desired lemma.

\end{proof}

\begin{proof}[Proof of Lemma \ref{lem:simple-cycle}]
	Recall the definition of $R_1$,
	\[ R_1 = \sum_{\gamma\in \mathcal{B}^1}\frac{W(\gamma)}{M^{m^{\gamma}/2}}=\sum_{\gamma\in\mathcal{B}^1}e(\gamma) , \]
	where we let $e(\gamma)$ as a normalized edge weight of $\gamma$.
	Denote
	\[ S = \prod_{\gamma\in \mathcal{B}_S}\left( 1+\frac{W(\gamma)}{M^{m^\gamma /2}} \right)=\prod_{\gamma\in \mathcal{B}_S}(1+e(\gamma)) .\]
	We will approximate both $R_1$ and $S$ with graphs of finite length, uniformly in $M,N$, and prove that the approximated terms are close enough.
	Define the truncated $R_1$ as
	\[ R_1^K :=\sum_{\gamma\in \mathcal{B}^1,\, n^\gamma< K}e(\gamma).\]
	For a subset of simple cycles $\mathcal{F}\subset \mathcal{B}_S$, let
	\[ n^{\mathcal{F}} :=\sum_{\gamma\in \mathcal{F}}n^{\gamma},\quad e(\mathcal{F}):=\prod_{\gamma\in \mathcal{F}}e(\gamma) .\]
	Then, truncated $S$ can be defined as
	\[ S^K = \sum_{\mathcal{F}\subset\mathcal{B}_S,\, n^\mathcal{F}< K}e(\mathcal{F}).\]
	First, by the orthogonality of two distinct cycles, we have
	\[ \mathbb{E}[(R_1-R_1^K)^2]=\sum_{\gamma\in \mathcal{B}^1,\, n^\gamma \ge K}\mathbb{E}[e(\gamma)^2]=\sum_{\gamma\in \mathcal{B}^1,\,n^\gamma\ge K}\left(\frac{\tau_1'}{M}\right)^{n_1^\gamma }.\]
	By Lemma \ref{lem:1-cycle-cutoff-rademacher}, for any fixed $\epsilon$, there exists ($M,N$-independent) constant $K_\epsilon$ such that for any $K\ge K_\epsilon$, and for any sufficiently large $M$, $\mathbb{E}[(R_1-R_1^K)^2]<\epsilon$.

	Next, we claim that as $K\to \infty$, $S - S^K$ converges to $0$ in probability, uniformly in $M,N$.
	For any $z \in \mathbb{C}$, define
	\[ \varphi(z) = \prod_{\gamma\in \mathcal{B}_S}(1+z^{n^\gamma}e(\gamma)) .\]
	Then, $S = \varphi(1)$ and
	\[ S^K = \sum_{n=0}^{K-1} \frac{\varphi^{(n)}(0)}{n!}.\]
	By the remainder formula, we have
	\[ |S-S^K| =\left|\varphi(1)-\sum_{n=0}^{K-1}\frac{\varphi^{(n)}(0)}{n!}\right|\le \frac{1}{R^{K-1}(R-1)}\sup_{z\in \mathbb{C},|z|=R}|\varphi(z)| \]
	for any $R>1$. 
	Using $|1+\zeta|\le \exp(\operatorname{Re} \zeta+\frac{1}{2}|\zeta|^2)$, we get
	\begin{equation}\label{eq:remainder-bound}
		\begin{split}
			\sup_{z\in \mathbb{C},|z|=R}|\varphi(z)|
			&\le \exp\left(\sum_{s=2}^{\min\{M,N\}}\operatorname{Re}\left(z^{2s}\sum_{\gamma\in \mathcal{B}^s, |\gamma| = 2s}e(\gamma)\right)+\frac{1}{2}\sum_{\gamma\in \mathcal{B}_S}R^{2n^\gamma}e(\gamma)^2 \right) \\
			&\le \exp\left(\sum_{s=2}^{\min\{M,N\}}R^{2s}\left|\sum_{\gamma\in \mathcal{B}^s, |\gamma| = 2s}e(\gamma)\right|+\frac{1}{2}\sum_{\gamma\in \mathcal{B}_S}R^{2n^\gamma}e(\gamma)^2 \right) \\
		\end{split}
	\end{equation}
	To bound the first term, we see that
	\[ \mathbb{E}\left[\sum_{s=2}^{\min\{M,N\}} R^{2s}\left|\sum_{\gamma\in \mathcal{B}^s, |\gamma|=2s}e(\gamma)\right|\right]\le \sum_{s=2}^\infty R^{2s}\mathbb{E}\left[\left(\sum_{\gamma\in \mathcal{B}^s,|\gamma|=2s}e(\gamma)\right)^2\right]^{1 /2}  .\]
	From the orthogonality, if we take $R>1$ such that $R^2\tau_1<\snr_c\sqrt{\ratio}$, then as in \eqref{eq:rademacher-variance-eq}, there exists a ($M,N$-independent) constant $C'$ such that
	\begin{align*} 
		\sum_{s=2}^\infty R^{2s}\mathbb{E}\left[\left(\sum_{\gamma\in \mathcal{B}^s,|\gamma|=2s}e(\gamma)\right)^2\right]^{1 /2}
		&=\sum_{s=2}^{\min\{M,N\}} R^{2s}\left(\sum_{\gamma\in \mathcal{B}^s,|\gamma|=2s}\mathbb{E}\left[e(\gamma)^2\right]\right)^{1 /2} \\
		&\to \sum_{s=2}^\infty \frac{1}{\sqrt{2s}}\left( \frac{R^2 \tau_1}{\sqrt{\ratio}} \right)^s < C'.
	\end{align*}
	Similarly, there exists a ($M,N$-independent) constant $C''$ such that 
	\[ \mathbb{E}\left[\sum_{\gamma\in \mathcal{B}_S}R^{2n^\gamma}e(\gamma)^2\right]= \sum_{\gamma\in \mathcal{B}^S}\left( \frac{R^2\tau_1'}{M} \right)^{m^\gamma} \to \sum_{s=2}^\infty \frac{1}{2s}\left(\frac{R^4\tau_1^2}{\ratio} \right)^s < C'' .\]
	By Markov inequality, there exists a ($M,N$-independent) constant $C$ such that for any $\eta>1$,
	\[ \mathbb{P}\left(\sup_{z\in \mathbb{C}, |z|=R}|\varphi(z)|>\eta\right)\le \frac{C}{\log\eta}.	\]
	Therefore, for any given $\epsilon>0$, we have
	\[ \lim_{K\to \infty}\limsup_{M,N\to \infty}\mathbb{P}(|S-S^K|>\epsilon)=0 .\]

	Finally, let $\mathcal{B}^1_S\subset \mathcal{B}^1$ be the set of all bipartite $1$-multicycles where the degree of any vertex is $2$, i.e., union of vertex-disjoint bipartite $1$-multicycles.
	Then, define
	\[ G^K := \sum_{\gamma\in \mathcal{B}^1_S,\,n^\gamma <K }e(\gamma)  .\]
	For $\gamma \in \mathcal{B}^1 \setminus\mathcal{B}^1_S$, at least one vertex has degree of at least $4$, so $n^\gamma$, the number of edges of $\gamma$ is bigger than the number of vertices of $\gamma$.
	Since $n^\gamma <K$, we have finite number of graph isomorphisms, which leads to
	\[ \mathbb{E}[(R^K_1 -G^K)^2] =O(N^{-1}).\]

	Next, denote $\gamma^\mathcal{F}$ as a union multigraph of $\mathcal{F}\subset \mathcal{B}_S$.
	If every graphs in $\mathcal{F}$ are vertex disjoint, then the union graph $\gamma^\mathcal{F}$ is a bipartite $1$-multicycle with every vertex of degree $2$, i.e., $\gamma^{\mathcal{F}}\in \mathcal{B}^1_S$.
	Further, there is a unique correspondence between the subset $\mathcal{F}\subset \mathcal{B}_S$ and a union graph $\gamma^\mathcal{F}$ if $\gamma^\mathcal{F}\in \mathcal{B}^1_S$.
	We now have
	\[ S^K-G^K = \sum_{\substack{\mathcal{F}\subset\mathcal{B}_S, n^{\mathcal{F}}<K \\ \gamma^{\mathcal{F}}\not\in\mathcal{B}^1_S}}e(\mathcal{F}) .\]
	Since $\gamma^{\mathcal{F}}\not\in \mathcal{B}^1_S$, there exists at least one vertex with degree at least $4$, and $n^\mathcal{F}<K$ gives finite number of decomposing $\gamma^\mathcal{F}$ into $\mathcal{F}$, we conclude that
	\[ \mathbb{E}[(S^K-G^K)^2]=O(N^{-1}) .\]
\end{proof}

\begin{proof}[Proof of Lemma \ref{lem:4-cycle-cutoff-rademacher}]
	This Lemma is the bipartite counterpart of Lemma 6.1 in \cite{Choo-Han-Lee2026}.
	We first follow the proof of Lemma \ref{lem:4-cycle}.
	Note that in Lemma \ref{lem:4-cycle-cutoff-rademacher}, we are assuming the GOE disorder for log likelihood ratio
	From \eqref{eq:p-defn-for-goe-llr}, we have
	\[ \tau_n'=\frac{\snr^n}{(n!)^2}\mathbb{E}\left[\left( \frac{p^{(n)}(\sqrt{N}\noise)}{p(\sqrt{N}\noise)} \right)^2 \right]= \frac{\snr^n}{(n!)^2}\times n! = \frac{\snr^n}{n!}.\]
	Since
	\begin{equation}\label{eq:4-cycle-rademacher-decomp}
		\sum_{\gamma\in \mathcal{B}^4, n^\gamma \ge s}\frac{1}{M^{m^\gamma}}\mathbb{E}[W(\gamma)^2]\le \sum_{\gamma\in \mathcal{B}^2,n^\gamma \ge s} \frac{1}{M^{m^\gamma}}\mathbb{E}[W(\gamma)^2]+\sum_{\gamma\in \mathcal{B}^4\setminus\mathcal{B}^2} \frac{1}{M^{m^\gamma}}\mathbb{E}[W(\gamma)^2].
	\end{equation}
	For the second term, we group the bipartite multicycles in $\mathcal{B}^4\setminus\mathcal{B}^2$ according to their equivalence class defined through the similarity in Definition \ref{defn:similarity}.
	Then we get
	\begin{align*}
		&\sum_{\gamma\in \mathcal{B}^4\setminus\mathcal{B}^2} \frac{1}{M^{m^\gamma}}\mathbb{E}[W(\gamma)^2]
	=\sum_{\gamma\in \mathcal{B}^2}\mathbb{E}\left[\left( \sum_{\gamma'\in \mathcal{B}^4\setminus\mathcal{B}^2, \gamma'\sim\gamma} \frac{1}{M^{m^{\gamma'} /2}}W(\gamma') \right)^2 \right] \\
		&\le \sum_{\gamma\in \mathcal{B}^2}\left( \frac{\tau_1'}{M} \right)^{n_1^\gamma}\left( \frac{\tau_2'}{M^2} \right)^{n_2^\gamma}\left[\left( 1+\frac{K^4\sqrt{\tau_3'}}{M\sqrt{\tau_1'}} \right)^{n_1^\gamma}\left( 1+\frac{K^4\sqrt{\tau_4'}}{M\sqrt{\tau_2'}} \right)^{n_2^\gamma}-1\right]^2\\
		&\le \frac{CK^4}{M}\sum_{\gamma\in \mathcal{B}^2}\left( \frac{\tau_1'}{M} \right)^{n_1^\gamma}\left( \frac{\tau_2'}{M^2} \right)^{n_2^\gamma}=\frac{CK^4}{M}\sum_{\gamma\in \mathcal{B}^2}\left( \frac{\snr}{M} \right)^{n_1^\gamma}\left( \frac{\snr^2}{2M^2} \right)^{n_2^\gamma},
	\end{align*}
	for some constant $C$.
	Since the sum is bounded by $(1-\snr^2 /\ratio)^{-1 /2}+o(1)$ from \eqref{eq:second-moment-graph-form-rademacher}, we find that the second term of \eqref{eq:4-cycle-rademacher-decomp} is $o(1)$.

	Now for the first term, since $n^\gamma = n_1^\gamma +n_2^\gamma$ for $\gamma\in \mathcal{B}^2$, we have
	\begin{equation}\label{eq:2-cycle-rademacher-decomp}
		\sum_{\gamma\in \mathcal{B}^2, n^\gamma>2s}\frac{1}{M^{m^\gamma}}\mathbb{E}[W(\gamma)^2]\le \sum_{\gamma\in \mathcal{B}^2, n_1^\gamma>s} \frac{1}{M^{m^\gamma}}\mathbb{E}[W(\gamma)^2]+\sum_{\gamma\in \mathcal{B}^2, n_2^\gamma>s}\frac{1}{M^{m^\gamma}}\mathbb{E}[W(\gamma)^2].	
	\end{equation}
	Our goal is to show that the two sums in the right side of \eqref{eq:2-cycle-rademacher-decomp} are small.
	We have
	\[ \frac{1}{M^{m^\gamma}}\mathbb{E}[W(\gamma)^2]=\left(\frac{\snr}{M}\right)^{n_1^\gamma}\left(\frac{\snr^2}{2M^2} \right)^{n_2^\gamma} .\]
	To estimate the second term of \eqref{eq:2-cycle-rademacher-decomp}, we group the graphs in $\mathcal{B}^2$ according to their bipartite 1-submulticycles, i.e., bipartite $2$-multicycle but double edges removed.
	Then,
	\[ \sum_{\gamma\in \mathcal{B}^2, n_2^\gamma>s} \frac{1}{M^{m^\gamma}}\mathbb{E}[W(\gamma)^2]\le \left( \sum_{\gamma'\in \mathcal{B}^1}\frac{1}{M^{m^{\gamma'}}}\mathbb{E}[W(\gamma')^2] \right)\left( \sum_{r=s}^\infty \frac{M^r N^r}{r!}\left( \frac{\snr^2}{2M^2} \right)^r \right)  .\]
	The first term in the right side is still bounded by $(1-\snr^2 / \ratio)^{-1 /2}+o(1)$.
	Since the second part is a tail part of the Taylor series of the $e^{N\snr / (2M)}$, which is smaller than $e^{\snr /\ratio}$ for sufficiently large $M,N$, we can find an $M,N$-independent constant $s_1$ such that $s>s_1$, the left side is smaller than $\epsilon$. 

	Similarly, for the first term of \eqref{eq:2-cycle-rademacher-decomp}, since we group the graphs in $\mathcal{B}^2$ according to their double edges.
	Since we still have bipartite $1$-multicycle after removing all the double edges, we get
	\[ \sum_{\gamma\in \mathcal{B}^2, n_1^\gamma >s} \frac{1}{M^{m^\gamma}}\mathbb{E}[W(\gamma)^2]\le \left( \sum_{\gamma'\in \mathcal{B}^2,n_1^{\gamma'}=0}\frac{1}{M^{m^{\gamma'}}}\mathbb{E}[W(\gamma')^2] \right)\left( \sum_{\gamma''\in \mathcal{B}^1,n^{\gamma''}>s}\left( \frac{\snr}{M} \right)^{m^{\gamma''}} \right)   .\]
	By Lemma \ref{lem:1-cycle-cutoff-rademacher}, we can find an $M,N$-independent constant $s_2$ such that if $s>s_2$, the left side is smaller than $\epsilon$.
	Now we can finish the proof of the desired lemma by letting $K_\epsilon''' = 2\max\{s_1, s_2\}$.
\end{proof}

\subsection*{Acknowledgement}
Ji Oon Lee is grateful to the Department of Mathematics, Harvard University, for their kind hospitality during a visit in the acadmeic year 2026-2027, where part of this work was completed. This work was partially supported by National Research Foundation of Korea under grant number RS-2023-NR076695.

\bibliographystyle{abbrv}
\bibliography{spiked_rectangular}

\end{document}